\newtheorem{theorem}{Theorem} [section]
\newtheorem{lemma}[theorem]{Lemma}
\newtheorem{proposition}[theorem]{Proposition}
\newtheorem{remark}[theorem]{Remark}
\newtheorem{definition}[theorem]{Definition}
\DeclareMathOperator*{\intt}{\int}
\DeclareMathOperator{\med}{med}
\newcommand{\I}{\hspace{0.5mm}\text{I}\hspace{0.5mm}}
\newcommand{\II}{\text{I \hspace{-2.8mm} I} }
\newcommand{\noi}{\noindent}
\newcommand{\Z}{\mathbb{Z}}
\newcommand{\R}{\mathbb{R}}
\newcommand{\C}{\mathbb{C}}
\newcommand{\T}{\mathbb{T}}
\let\Re=\undefined\DeclareMathOperator*{\Re}{Re}
\let\Im=\undefined\DeclareMathOperator*{\Im}{Im}
\let\P= \undefined
\newcommand{\P}{\mathbf{P}}
\newcommand{\E}{\mathbb{E}}
\newcommand{\N}{\mathcal{N}}
\newcommand{\NB}{\mathbb{N}}
\newcommand{\FL}{\mathcal{F}L} 
\newcommand{\Gg}{\mathcal{G}}
\newcommand{\al}{\alpha}
\newcommand{\be}{\beta}
\newcommand{\eps}{\varepsilon}
\newcommand{\g}{\gamma}
\newcommand{\G}{\Gamma}
\newcommand{\ld}{\lambda}
\newcommand{\s}{\sigma}
\newcommand{\ft}{\widehat}
\newcommand{\Ft}{{\mathcal{F}}}
\newcommand{\wb}{\overline}
\newcommand{\embeds}{\hookrightarrow}
\newcommand{\nbar}{\overline{n}}
\newcommand{\mbar}{\overline{m}}
\newcommand{\conj}[1]{\overline{#1}}
\renewcommand{\l}{\ell}
\newcommand{\ges}{\gtrsim}
\newcommand{\jb}[1]
{\langle #1 \rangle}
\DeclareMathOperator{\Id}{Id}
\numberwithin{equation}{section}
\numberwithin{theorem}{section}
\begin{document}
	\baselineskip = 14pt

	\title[A remark on the well-posedness of mKdV in Fourier-Lebesgue spaces]
	{A remark on the well-posedness of the modified KdV equation in the Fourier-Lebesgue spaces}

	\author[A.~Chapouto]
	{Andreia Chapouto}

	\address{
		Andreia Chapouto\\ Maxwell Institute for Mathematical Sciences
		and 
		School of Mathematics\\
		The University of Edinburgh\\
		and The Maxwell Institute for the Mathematical Sciences\\
		James Clerk Maxwell Building\\
		The King's Buildings\\
		Peter Guthrie Tait Road\\
		Edinburgh\\ 
		EH9 3FD\\United Kingdom} 
	
	\email{andreia.chapouto@ed.ac.uk}

	\subjclass[2010]{35Q53}

	\keywords{modified Korteweg-de Vries equation; global well-posedness; Fourier restriction norm method; Fourier-Lebesgue spaces}

	
	\begin{abstract}
		We study the complex-valued modified Korteweg-de Vries equation (mKdV) on the circle. We first consider the real-valued setting and show global well-posedness of the (usual) renormalized mKdV equation in the Fourier-Lebesgue spaces. 
		
		In the complex-valued setting, we observe that the momentum plays an important role in the well-posedness theory. In particular, we prove that the complex-valued mKdV equation is ill-posed in the sense of non-existence of solutions when the momentum is infinite, in the spirit of the work on the nonlinear Schr\"odinger equation by Guo-Oh (2018). This non-existence result motivates the introduction of the second renormalized mKdV equation, which we propose as the correct model in the complex-valued setting outside of $H^\frac12(\T)$.
		Furthermore, imposing a new notion of finite momentum for the initial data, at low regularity, we show existence of solutions to the complex-valued mKdV equation. In particular, we require an energy estimate, from which conservation of momentum follows.

	\end{abstract}

	
	\vspace*{-10mm}

	\maketitle
	%
	

	\tableofcontents

\section{Introduction}
\subsection{The modified Korteweg-de Vries equation}
In this paper, we study the Cauchy problem for the complex-valued modified Korteweg-de Vries equation (mKdV) on the one-dimensional torus $\T=\R / (2\pi\Z) $:
\begin{align}
	\begin{cases}
		\partial_t u + \partial_x^3 u = \pm |u|^2 \partial_x u, \\
		u\vert_{t=0} = u_0,
	\end{cases}
	\quad (t,x)\in\R\times\T.
	\label{mkdv}
\end{align}
The mKdV equation \eqref{mkdv} has been extensively studied from both the theoretical and applied points of view. We will pursue a harmonic analytic approach to study the well-posedness of \eqref{mkdv} in the Fourier-Lebesgue spaces (see \eqref{FL}). Let us first go over the local-in-time results in \(L^2\)-based Sobolev spaces. In \cite{BO2}, Bourgain introduced the Fourier restriction norm method, utilizing the $X^{s,b}$-spaces (see \eqref{xsb}), and proved the local well-posedness in $H^s(\T)$, for $s\geq \frac12$, of the \textit{first renormalized}\footnote{The equation \eqref{renorm} is usually referred to as the \textit{renormalized} mKdV equation. However, we will introduce a second gauge transform and second renormalization in Section~\ref{subsec:C} which motivates the change in notation.} mKdV equation (mKdV1)
\begin{align}
	\begin{cases}
		\partial_t u + \partial_x^3 u = \pm \bigg( |u|^2 -  \displaystyle \fint |u|^2 \ dx \bigg) \partial_x u, \\
		u\vert_{t=0} = u_0,
	\end{cases}
	\quad (t,x)\in\R\times\T,
	\label{renorm}
\end{align}
where $\fint f \, dx = \frac{1}{2\pi} \int_0^{2\pi} f\,dx$.
The renormalized equation \eqref{renorm} is obtained from mKdV \eqref{mkdv} through the following gauge transformation
\begin{align}
	\mathcal{G}_1 (u)(t,x) : = u(t, x \mp \mu(u(t)) t),
	\label{gauge1}
\end{align} 
where $\mu(u(t)):= \frac{1}{2\pi} \|u(t)\|_{L^2}^2$ denotes the mass, a conserved quantity of the system, i.e., $\mu(u(t)) = \mu(u_0)$. Note that mKdV1 \eqref{renorm} and the original mKdV equation \eqref{mkdv} are equivalent in $L^2(\T)$. 

Bourgain's result follows from a contraction mapping argument and it is sharp with respect to this method, since the data-to-solution map fails to be $C^3$-continuous \cite{BO97} and locally uniformly continuous in $H^s(\T)$ for~$s<\frac12$~\cite{CCT03}. We point out that Bourgain's analysis focused on real-valued inital data $u_0$, whose corresponding solution $u$ is real-valued and satisfies the following equation:
\begin{align*}
	\partial_t u + \partial^3_x u = \pm u^2 \partial_x u.
\end{align*}
The results mentioned above extend to the complex-valued setting.

In the real-valued setting, Takaoka-Tsutsumi \cite{TT04} and Nakanishi-Takaoka-Tsutsumi \cite{NTT} applied the energy method and proved the local well-posedness of mKdV in $H^s(\T)$ for $s>\frac13$. In a recent paper \cite{MPV}, Molinet-Pilod-Vento extended this result to the end-point $s=\frac13$. By exploiting the completely integrable structure of the equation, Kappeler-Topalov \cite{KT} used the inverse spectral method to show global existence and uniqueness of solutions to the real-valued defocusing mKdV (with the $+$ sign) in $H^s(\T)$, $s\geq 0$. Here, solutions are understood as the unique limit of smooth solutions and it is not required that the equation is satisfied in the sense of distributions (see \cite{KT, KOY, OH2} for further details).

Using the short-time Fourier restriction norm method, Molinet \cite{Mo} proved existence of distributional solutions for the real-valued mKdV equation in $L^2(\T)$ (without uniqueness). In the same paper, he showed that mKdV is ill-posed below $L^2(\T)$, in the sense that the data-to-solution map is discontinuous in $H^s(\T)$ for $s<0$. See also the work by Schippa \cite{Schippa}. This ill-posedness result shows the sharpness of the well-posedness theory in $L^2$-based Sobolev spaces. However, the scaling analysis suggests that the local well-posedness should hold in $H^s(\T)$ for $s>-\frac12$, as we illustrate in the following.

Consider the following symmetry of the mKdV \eqref{mkdv} equation on the real line: given a solution $u$ of mKdV \eqref{mkdv} with initial data $u_0$, then
\begin{align}
	u^\ld (t,x) = \ld u(\ld^3t,\ld  x), \quad \ld>0,
	\label{scaling}
\end{align}
is also a solution of mKdV, with rescaled initial data $\ld u_0(\ld x)$. A direct calculation shows that the homogeneous $\dot{H}^s(\R)$-norm is preserved under the scaling \eqref{scaling} when $s=-\frac12$. Thus, one would expect the local well-posedness to hold in the subcritical regime $H^s(\R)$ for $s>-\frac12$.

Although the scaling \eqref{scaling} does not hold in the periodic setting, the scaling heuristics are still relevant. The gap between the scaling prediction ($s>-\frac12$) and the ill-posedness result by Molinet ($s<0$) motivates the search for spaces with analogous scaling. One such choice are the Fourier-Lebesgue spaces $\FL^{s,p}(\T)$ defined by the norm
\begin{align}
	\|f\|_{\FL^{s,p}} = \| \jb{n}^s \ft{f}(n) \|_{\l^p_n}.\label{FL}
\end{align}
We can conduct a similar scaling analysis on the homogeneous space $\dot{\FL}^{s,p}(\R)$ defined by the norm
\begin{align*}
	\|f\|_{\dot{\FL}^{s,p}(\R)} = \big\| |\xi|^s \ft{f}(\xi) \big\|_{L^p_\xi(\R)}.
\end{align*}
It follows that the $\dot{\FL}^{s_{\text{crit}}(p),p}(\R)$-norm is invariant under the scaling \eqref{scaling}, where $s_{\text{crit}} (p) =-\frac1p$, with the convention $s_{\text{crit}}(\infty) = 0$.
Once again, transporting the scaling heuristics to the periodic setting, we say that the mKdV equation \eqref{mkdv} is scaling-critical in $\FL^{0, \infty}(\T)$. On the other hand, we have that $\dot{\FL}^{s,p}(\R)$ scales like $\dot{H}^\s(\R)$ for $\s = s + \frac1p - \frac12$.

Regarding the local-in-time analysis,  Kappeler-Molnar \cite{KM} proved the local well-posedness of the real-valued defocusing mKdV in $\FL^{s,p}(\T)$ for $s\geq 0$ and $1\leq p < \infty$, where their solutions are understood as the unique limit of classical solutions as in \cite{KT}. In view of the scaling critical regularity, this result is almost critical, in the scale of the Fourier-Lebesgue spaces. Unlike the $L^2(\T)$ solutions of \cite{KT, Mo}, the solutions in \cite{KM} are not yet known to satisfy the equation in the distributional sense. 

Now, we turn our attention to the global aspect of well-posedness. In \cite{BO2}, Bourgain proved global well-posedness of \eqref{renorm} in $H^s(\T)$ for $s\geq 1$. For the real-valued setting, Colliander-Keel-Staffilani-Takaoka-Tao \cite{CKSTT03} showed the global well-posedness in $H^s(\T)$, $s\geq \frac12$, using the $I$-method. This result was extended to $H^s(\T)$ for $s\geq 0$ for the real-valued defocusing mKdV by Kappeler-Topalov \cite{KT}, using the complete integrability of the equation. In \cite{KM}, Kappeler-Molnar proved global-in-time existence of solutions for the real-valued mKdV with small initial data in $\FL^{s,p}(\T)$, $s\geq 0$ and $1\leq p < \infty$.

In a recent paper \cite{KVZ}, Killip-Vi\c{s}an-Zhang exploited the completely integrable structure of the equation and established global-in-time a priori bounds, in the complex-valued setting. These a priori bounds, combined with the local well-posedness result in \cite{MPV}, yield the global well-posedness of the real-valued mKdV equation in $H^s(\T)$ for $s\geq \frac13$. Oh-Wang \cite{OH1} extended the result in \cite{KVZ} to the Fourier-Lebesgue setting and established global-in-time a priori bounds in $\FL^{s,p}(\T)$ for $2\leq p<\infty$ and $ 0\leq s <1 - \frac1p$ (see Proposition~\ref{prop:apriori}).

The question of well-posedness of the complex-valued mKdV equation \eqref{mkdv} has also been explored on the real line. In the Fourier-Lebesgue setting, Gr\"unrock \cite{G04} showed the local well-posedness of mKdV \eqref{mkdv} in $\FL^{s,p}(\R)$ for $s\geq 0$ and $2\leq p <4$, which was extended by Gr\"unrock-Vega \cite{GV09} to $s\geq \frac{1}{2p}$ and $1\leq p <\infty$, almost covering the full subcritical regime. These solutions can be extended in time using the a priori bounds by Oh-Wang \cite{OH1}. In a recent paper \cite{HGKV}, Harrop-Griffiths-Killip-Vi\c{s}an proved optimal local well-posedness in $H^s(\R)$ for $s>-\frac12$, exploiting the completely integrable structure of the equation.

Our goal in this paper is to study the mKdV equation in the Fourier-Lebesgue spaces, both in the real and the complex-valued settings, on the one-dimensional torus. In particular, we find that there is an additional difficulty in the low regularity complex-valued setting, which will be discussed in Section~\ref{subsec:C}. This is a phenomenon particular to the mKdV equation, since for other dispersive equations, such as the Korteweg-de Vries and the Benjamin-Ono equations, it is not necessary to distinguish between these two settings (see \cite{KPV96,CKSTT03,IonescuKenig}).

\subsection{The real-valued setting}\label{subsec:real}
We start by considering the real-valued mKdV1 equation \eqref{renorm}. Our first result is the local well-posedness of mKdV1 \eqref{renorm} below $H^\frac12(\T)$ with real-valued initial data, indeed proving that the solutions in \cite{KM}, for a restricted range of $s$ and $p$, satisfy the equation in the sense of distributions. We use the Fourier restriction norm method, with $X^{s,b}$-spaces adapted to the Fourier-Lebesgue spaces, hence not relying on the complete integrability of the equation.
\begin{theorem}\label{th:lwpreal}
	Let $(s,p)$ satisfy one of the following conditions: \textup{(i)} $\frac12 \leq s < \frac34$, ${1\leq p <\frac{4}{3-4s}}$; \textup{(ii)} $s\geq \frac34$, $1\leq p <\infty$. Then, the real-valued mKdV1 equation \eqref{renorm} is locally well-posed in $\FL^{s,p}(\T)$.
\end{theorem}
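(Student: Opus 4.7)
The plan is to run Bourgain's Fourier restriction norm method in an $X^{s,b}_p$-space adapted to the Fourier-Lebesgue scale. Define, for a spacetime function $u$ on $\R\times\T$,
$$\|u\|_{X^{s,b}_p} := \big\| \jb{n}^s \jb{\tau-n^3}^b \ft u(\tau,n)\big\|_{\ell^p_n L^{p'}_\tau(\Z\times\R)},$$
with the localized variant $X^{s,b}_{p,T}$ on $[-T,T]$; here $p'$ is the H\"older conjugate of $p$. Standard arguments yield the homogeneous and inhomogeneous linear estimates for the Airy group $e^{-t\dx^3}$ on $X^{s,b}_{p,T}$, together with the embedding $X^{s,b}_{p,T} \embeds C([-T,T];\FL^{s,p}(\T))$ for $b>1/p$, proved by H\"older in $\tau$ against $\jb{\tau-n^3}^{-b}\in L^p_\tau$. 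By the Banach contraction principle, Theorem \ref{th:lwpreal} reduces to proving the trilinear estimate
$$\bigg\| \big(u_1 u_2 - \fint u_1 u_2\, dx\big)\, \dx u_3\bigg\|_{X^{s,b-1}_{p,T}} \les T^\theta \prod_{j=1}^3 \|u_j\|_{X^{s,b}_{p,T}}$$
for some $\theta>0$ and $b$ slightly above $1/p$, together with the analogous difference estimate.

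After duality and taking the space-time Fourier transform, the estimate becomes a weighted $\ell^p$--$L^{p'}$ bound on a trilinear convolution over $n = n_1+n_2+n_3$. The renormalization subtracts exactly the contributions with $n_3=n$ (where $n_1+n_2=0$), and after symmetrization the sum runs over the non-resonant set $(n-n_1)(n-n_2)(n-n_3)\neq 0$. Combined with the algebraic identity
$$n^3 - n_1^3 - n_2^3 - n_3^3 \;=\; 3(n-n_1)(n-n_2)(n-n_3)\qquad\text{on } n=n_1+n_2+n_3,$$
this forces at least one of the four modulations $\jb{\tau-n^3}, \jb{\tau_j-n_j^3}$ to be comparable in size to $|(n-n_1)(n-n_2)(n-n_3)|$, yielding the crucial modulation gain.

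I would then dyadically decompose the four frequencies and four modulations, estimating each piece by H\"older in $\tau$ (against the $L^{p'}$ direction) followed by the $\ell^p$ Young inequality in $n$, and count frequency configurations in each dyadic window via sharp divisor-type bounds on level sets of $(n-n_1)(n-n_2)(n-n_3)$, along the lines of Gr\"unrock--Vega \cite{GV09} and Oh--Wang \cite{OH1}. The $T^\theta$ gain arises from the standard time-localization lemma, which is where the requirement $b>1/p$ enters.

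The main obstacle is sharpness of the trilinear estimate in range (i), with $s<\tfrac34$. The $\ell^p$ counting in the frequency variables is strictly lossier than the $\ell^2$ Plancherel arguments underlying Bourgain's original proof at $p=2$, and balancing this loss against the modulation gain—especially in the high-high-high interaction and the high-low-low interaction where the derivative lands on the highest frequency—pins down the condition $p<4/(3-4s)$, equivalently $s+1/p > 3/4$. In range (ii) the regularity $s\geq\tfrac34$ leaves enough room that no restriction on $p$ is needed. Uniqueness and continuous dependence on the initial data follow by applying the same trilinear estimate to differences, completing the proof of local well-posedness in $\FL^{s,p}(\T)$.
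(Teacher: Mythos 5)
Your high-level plan (restriction norm method, the factorization $n^3-n_1^3-n_2^3-n_3^3=3(n_1+n_2)(n_1+n_3)(n_2+n_3)$, case analysis on the dominant modulation, $\l^p$-counting of frequency configurations) matches the paper, but your functional framework has a fatal gap, and it is exactly the gap the paper's $Z$-spaces are built to close. You run the contraction in a single space $X^{s,b}_p$ (with norm $\l^p_n L^{p'}_\tau$) for $b>\frac1p$, chosen so that it embeds into $C([-T,T];\FL^{s,p}(\T))$. On the torus this is incompatible with the trilinear estimate: the trilinear estimate forces $b=\frac12$ exactly, for every $p$ and every $s$. To see this, take $\ft{u}_1,\ft{u}_2,\ft{u}_3$ to be unit bumps at frequencies $(n_1,n_2,n_3)=(1,-2,N)$ with all input modulations $O(1)$; this configuration survives the renormalization since $(n_1+n_2)(n_1+n_3)(n_2+n_3)=-(N+1)(N-2)\neq 0$, and the output modulation is then forced to be $\sim|\Phi(\nbar)|\sim N^2$. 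The left-hand side of your estimate is $\sim \jb{N}^s\cdot N\cdot N^{2(b-1)}$ (derivative $|n_3|=N$, dual weight $\jb{\s_0}^{b-1}$), while the right-hand side is $\sim\jb{N}^s$, so the ratio $N^{2b-1}$ blows up for any $b>\frac12$; the $\jb{\cdot}^s$ weights cancel, so no amount of regularity helps, and since only single-point bumps are involved the computation is identical in your mixed $\l^p_nL^{p'}_\tau$ norms. Dually, placing modulation $\sim N^2$ on the high-frequency factor so that the output modulation is $O(1)$ gives a ratio $N^{1-2b}$, which blows up for any $b<\frac12$. Hence $b=\frac12$ is forced. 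For $1\leq p\leq 2$ (which includes $p=2$, allowed for every $s$ in range (i)) this contradicts your embedding requirement $b>\frac1p\geq\frac12$; for $p>2$ your choice of $b$ slightly above $\frac1p$ lies strictly below $\frac12$ and is killed by the second example. This is precisely why the paper works at $b=\frac12$ in $X^{s,\frac12}_{p,2}$, where the embedding into $C(\R;\FL^{s,p}(\T))$ fails, and restores it by intersecting with $X^{s,0}_{p,1}$, i.e.\ by contracting in $Z^{s,\frac12}_p=X^{s,\frac12}_{p,2}\cap X^{s,0}_{p,1}$ — which in turn requires estimating the nonlinearity in \emph{both} $X^{s,-\frac12}_{p,2}$ and $X^{s,-1}_{p,1}$ (Parts 1 and 2 of the proof of Proposition~\ref{prop:nonlinear}). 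Relatedly, your attribution of the $T^\theta$ gain to the requirement $b>\frac1p$ is backwards: the gain of Lemma~\ref{lm:time} is only available for modulation exponents strictly inside $(-\frac12,\frac12)$, so at the forced endpoint $b=\frac12$ the paper must prove the estimate with some factors measured in $X^{s,\frac12-\nu}_{p,2}$ and only then trade $\nu$ for $T^\delta$.

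There is a second, independent gap: the renormalization does not reduce the sum to the non-resonant set. Subtracting $\big(\fint|u|^2\,dx\big)\dx u$ removes only the pairings $n_1+n_2=0$; the diagonal term $\Ft_x\big(\mathcal{R}(u,u,u)\big)(n)=in|\ft{u}(n)|^2\ft{u}(n)$, coming from $n_1+n_3=0$ or $n_2+n_3=0$, survives even in the real-valued setting (only the momentum term vanishes there). On this term $\Phi(\nbar)=0$, so there is no modulation gain at all, and it must be handled separately — this is Part 3 of the proof of Proposition~\ref{prop:nonlinear}, where the requirement $\jb{n}^s|n|\lesssim\jb{n}^{3s}$ is exactly what produces the restriction $s\geq\frac12$, whose necessity is reflected in the failure of uniform continuity shown in Appendix~\ref{sec:ap}. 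Your sketch contains no estimate for this term and no mechanism generating $s\geq\frac12$, so even with the corrected choice of spaces the contraction would not close as written.
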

\begin{remark}\rm
	\noi (i) In view of \cite{KM}, we believe that the restriction on the range of $p$ is artificial, but we do not know how to remove it at this point.\footnote{In a recent preprint \cite{AC2}, we adapted the approach introduced by Deng-Nahmod-Yue in \cite{DNY} in the context of the periodic derivative nonlinear Schr\"odinger equation and improved Theorem~\ref{th:lwpreal} to cover the range $s\geq \frac12$ and $1\leq p < \infty$. The same improvement also holds for Theorem~\ref{th:lwp}.} See Remark~\ref{rm:nonlinear} for further discussion. Nevertheless, this range for $(s,p)$ agrees with Nguyen's local well-posedness result in the Fourier-Lebesgue spaces in \cite{N}, using the power series method in \cite{C1}. Note that the result in \cite{N} does not guarantee uniqueness of solutions.
	
	\smallskip
	\noi (ii) In a forthcoming work, we plan to combine the energy method in \cite{NTT} and the Fourier restriction norm method adapted  to the Fourier-Lebesgue spaces to improve the range of $(s,p)$ in Theorem~\ref{th:lwpreal}.
	
	\smallskip
	\noi (iii) As a consequence of the contraction mapping argument, uniqueness holds \emph{conditionally}, in the following sense: the solutions in Theorem~\ref{th:lwpreal} are unique in
	\[ C([-T,T], \FL^{s,p}(\T) ) \cap X^{s,\frac12}_{p,2}(T),  \]
	for all $0<T=T(u_0)\leq 1$ (see Definition~\ref{def:xsb}). 
	When $p=2$, it is known that \textit{unconditional} uniqueness of the mKdV equation holds in $H^s(\T)$, for $s \geq \frac13$, namely uniqueness in $C([-T,T];H^s(\T))$ (see \cite{KO, MPV}). It would also be of interest to study the uniqueness properties of solutions in the Fourier-Lebesgue spaces.
	
\end{remark}

Combining Theorem~\ref{th:lwpreal} with Oh-Wang's a priori bound, we show the global well-posedness of the real-valued mKdV1 equation \eqref{renorm} in the Fourier-Lebesgue spaces.

\begin{theorem}\label{th:gwp}
	Let $(s,p)$ satisfy one of the following conditions: \textup{(i)} $\frac12 \leq s < \frac34$, $1\leq p <\frac{4}{3-4s}$; \textup{(ii)} $s\geq \frac34$, $1\leq p <\infty$. Then, the real-valued mKdV1 equation \eqref{renorm} is globally well-posed in $\FL^{s,p}(\T)$.
\end{theorem}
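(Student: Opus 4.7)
The plan is to globalize the local well-posedness of Theorem~\ref{th:lwpreal} by controlling $\|u(t)\|_{\FL^{s,p}}$ on arbitrarily large time intervals. Because the local existence time in Theorem~\ref{th:lwpreal} depends only on $\|u_0\|_{\FL^{s,p}}$, any a priori bound of the form $\|u(t)\|_{\FL^{s,p}}\le F(t,\|u_0\|_{\FL^{s,p}})$ on each $[-T,T]$ allows us to iterate the local theorem finitely many times and extend the solution globally in time. I would also observe that mKdV and mKdV1 are related by the gauge $\mathcal{G}_1$ defined in \eqref{gauge1}, which acts as a time-dependent spatial translation and therefore preserves the $\FL^{s,p}$-norm, since the mass $\mu(u(t))$ is conserved; this lets one transfer a priori bounds freely between the two formulations.

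For $(s,p)$ in the overlap of the range of Theorem~\ref{th:gwp} and that of Proposition~\ref{prop:apriori} (namely $2\le p<\infty$ and $\tfrac12\le s<1-\tfrac1p$), the desired a priori bound is immediate from Oh-Wang, and iteration of Theorem~\ref{th:lwpreal} produces the global solution. For the remaining part of the range---either $p<2$, or $p\ge 2$ with $s\ge 1-\tfrac1p$---I would argue by persistence of regularity. When $p\le 2$ the embedding $\FL^{s,p}\hookrightarrow \FL^{s,2}=H^s$, combined with Bourgain's global well-posedness in $H^{\frac12}(\T)$ (applicable since $s\ge\tfrac12$), yields a global bound on $\|u(t)\|_{H^s}$. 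When $p>2$ and $s\ge 1-\tfrac1p$, applying Proposition~\ref{prop:apriori} at some $(s_0,p)$ with $s_0<1-\tfrac1p$ controls the weaker norm $\|u(t)\|_{\FL^{s_0,p}}$. In either case one reinvests the multilinear estimates underlying Theorem~\ref{th:lwpreal} in a Gr\"onwall loop, with the globally controlled weaker norm playing the role of the low-regularity factor, to obtain at-most-exponential growth of $\|u(t)\|_{\FL^{s,p}}$.

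The main obstacle is the persistence-of-regularity step for those $(s,p)$ not directly covered by Proposition~\ref{prop:apriori}. Implementing it cleanly amounts to establishing a trilinear estimate of the schematic form $\|\dx(u^2 v)\|_{Z}\lesssim \|u\|_{Y}^{2}\,\|v\|_{X^{s,\frac12}_{p,2}(T)}$, where $Z$ is the appropriate dual norm to $X^{s,\frac12}_{p,2}(T)$ and $Y$ is a norm in which global control is already available ($H^s$ or $\FL^{s_0,p}$). The high-regularity factor is placed in $v$ and the two lower-regularity factors in $u$, so that the Gr\"onwall iteration closes with the a priori bound on $\|u(t)\|_{Y}$ as input. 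This should follow from the same multilinear machinery developed for Theorem~\ref{th:lwpreal}, but the careful tracking of frequencies to separate the high-regularity factor from the low-regularity ones is the delicate point.
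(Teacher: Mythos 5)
Your proposal follows essentially the same route as the paper's proof: iteration of Theorem~\ref{th:lwpreal} via the Oh--Wang bound (Proposition~\ref{prop:apriori}) in the overlap range, and, outside it, a persistence-of-regularity argument resting on a modified trilinear estimate that places the high-regularity factor in one slot and the globally controlled lower-regularity norm in the other two --- this is exactly the paper's Proposition~\ref{prop:nonlinear_new}, whose proof is, as you predict, a variant of the machinery behind Theorem~\ref{th:lwpreal}, and the ensuing iteration yields the same at-most-exponential growth. The only quibbles are minor: the $H^{\frac12}(\T)$ global well-posedness you invoke for $p\le 2$ is due to Colliander--Keel--Staffilani--Takaoka--Tao \cite{CKSTT03} rather than Bourgain (the paper instead appeals to Proposition~\ref{prop:apriori} at this endpoint), and what it furnishes is global control of $\|u(t)\|_{H^{1/2}}$ rather than of $\|u(t)\|_{H^{s}}$ --- which is, however, precisely the low-regularity input your Gr\"onwall loop requires.
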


\begin{remark}
	\noi (i) Theorem~\ref{th:gwp} (with restricted ranges of $s$ and $p$) extends the result of Kappeler-Molnar \cite{KM}, as it applies to the real-valued defocusing equation but also to the large data setting. Moreover, it establishes that the solutions constructed in \cite{KM} under the assumptions of Theorem~\ref{th:gwp} satisfy the equation in the sense of distributions.
	
	\smallskip
	\noi(ii) The proof of Theorem~\ref{th:gwp} is based on applying the a priori bound by Oh-Wang in \cite{OH1} to iterate the local well-posedness argument. However, the estimate requires a restriction on the regularity $s<1-\frac1p$. 
	When $\frac12\leq s<1-\frac1p$, Theorem~\ref{th:gwp} follows directly from Theorem~\ref{th:lwpreal} and the a priori bound in \cite{OH1}. When $s\geq 1 - \frac1p$, we combine the global-in-time a priori bound with a persistence of regularity argument (see Section~\ref{sec:gwp}).
	
	\smallskip
	\noi(iii) In \cite{BO94}, Bourgain proved the invariance of the Gibbs measure under the flow of the real-valued mKdV equation,
	\begin{align}
		``d\mu \  = \ Z^{-1} \exp \bigg(\mp \frac14 \int_\T u^4 \ dx -\frac12 \int_\T (\partial_x u)^2 dx\bigg) \ du ", \label{gibbs}
	\end{align}
	by establishing the local well-posedness of mKdV \eqref{mkdv} in $H^s(\T) \cap \FL^{s_1,\infty}(\T)$ for some ${s<\frac12<s_1<1}$, which includes the support of \eqref{gibbs}. The invariance of the Gibbs measure on $\FL^{s,p}(\T)$ follows from the global well-posedness result in the real-valued setting in Theorem~\ref{th:gwp}, as $\FL^{s,p}(\T)$ with $s<1-\frac1p$ includes the support of \eqref{gibbs}.
	
\end{remark}

\subsection{The complex-valued setting}\label{subsec:C}
The main focus of our paper is on the complex-valued mKdV equation. We prove that the mKdV1 equation \eqref{renorm} is ill-posed in the Fourier-Lebesgue spaces, at low regularity, and propose an alternative equation as the correct model in the low regularity setting.

We start by considering the nonlinearity $\N(u)$ of mKdV1 \eqref{renorm} on the Fourier side, omitting the time dependence,
\begin{align}
	&\ft{\N(u)} (n)  \nonumber\\
	&= \sum_{\substack{n=n_1 + n_2 + n_3\\ n_1 + n_2 \neq 0}} in_3 \ft{u}(n_1) \conj{\ft{u}}(-n_2) \ft{u}(n_3)\nonumber\\
	& = \sum_{\nbar\in\Lambda(n)} in_3 \ft{u}(n_1) \wb{\ft{u}}(-n_2) \ft{u}(n_3) - in |\ft{u}(n)|^2 \ft{u}(n) + i\bigg( \fint_\T \Im(\conj{u} \partial_x u)  dx\bigg) \ft{u}(n), \label{Cnonlinear}
\end{align}
for $\nbar = (n_1,n_2,n_3)$ and 
\begin{align*}
	\Lambda(n):= \big\{ (n_1,n_2,n_3)\in\Z^3 : \ n=n_1+n_2+n_3, \ (n_1+n_2) (n_1+n_3) (n_2+n_3) \neq 0 \big\}. 
\end{align*}

In the real-valued setting, we have $\Im(\conj{u}\partial_x u) \equiv 0$ which implies that the last term on the right-hand side of \eqref{Cnonlinear} is zero. However, in the complex-valued case, this contribution may be nonzero. 
We define the \emph{momentum} $P(u)$ as follows:
\begin{align*}
	P(u) := \fint_\T \Im( \wb{u} \partial_x u)  dx = \sum_{n\in\Z} n |\ft{u}(n)|^2,
\end{align*}
and write the nonlinearity as $\N(u) = \N^*(u) + iP(u) u$. For a solution $u\in C(\R;H^\frac12(\T))$, the momentum $P(u(t))$ is finite and conserved, but below this regularity it is not clear if it is finite let alone conserved. Consequently, a new phenomenon arises in the complex-valued setting at low regularity, as the nonlinearity \eqref{Cnonlinear} may be ill-defined. In particular, we see that the momentum is responsible for the following ill-posedness of mKdV1 \eqref{renorm} outside $H^\frac12(\T)$.

\begin{theorem}
	Let $(s,p)$ satisfy one of the following conditions: \textup{(i)} $\frac12 \leq s < \frac34$, $1\leq p <\frac{4}{3-4s}$; \textup{(ii)} $s\geq \frac34$, $1\leq p <\infty$. Suppose that $u_0\in\FL^{s,p}(\T)$ has infinite momentum in the sense that
	\begin{align*}
		|P(\P_{\leq N} u_0 )| \to \infty \text{ as } N\to \infty,
	\end{align*}
	where $\P_{\leq N}$ denotes the Dirichlet projection onto the spatial frequencies $\{|n|\leq N\}$. Then, for any $T>0$, there exists no distributional solution $u\in C([-T,T];\FL^{s,p}(\T))$ to the complex-valued mKdV1 equation \eqref{renorm} satisfying the following conditions:
	\begin{enumerate}
		\item[(i)] $u\vert_{t=0} = u_0$;
		\item[(ii)] the smooth global solutions $\{u_N\}_{N\in\NB}$ of mKdV1 \eqref{renorm}, with $u_N\vert_{t=0} = \P_{\leq N}u_0$, satisfy $u_N \to u$ in $C([-T,T];\FL^{s,p}(\T))$.
	\end{enumerate}
	\label{th:nonexistence}
\end{theorem}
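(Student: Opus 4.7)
The plan is to exploit a gauge transformation that absorbs the resonant momentum contribution $iP(u)u$ from the nonlinearity, reducing the analysis to a better-behaved equation whose smooth iterates do converge; the assumed convergence $u_N\to u$ will then force the rapidly oscillating phase $e^{iP_N t}$ (with $P_N:=P(\P_{\leq N}u_0)$) to converge pointwise in $t$ to a unimodular limit, which contradicts $|P_N|\to\infty$ via Riemann--Lebesgue.

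More concretely, writing $\N(u)=\N^*(u)+iP(u)u$ as in \eqref{Cnonlinear}, I would first note that the smooth approximating solutions $u_N$ of mKdV1 conserve the momentum, $P(u_N(t))\equiv P_N$, so that $v_N(t,x):=e^{-iP_N t}u_N(t,x)$ solves the second renormalized mKdV equation
\[ \dt v+\dx^3 v=\N^*(v),\qquad v_N\vert_{t=0}=\P_{\leq N}u_0. \]
Next, I would establish local well-posedness of this gauged equation in $\FL^{s,p}(\T)$ on the same range of $(s,p)$ as in Theorem~\ref{th:lwpreal}, by adapting the Fourier restriction norm machinery to the nonlinearity $\N^*$; removing the resonant diagonal term can only simplify the multilinear estimates. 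Continuous dependence then yields $v_N\to v$ in $C([-T,T];\FL^{s,p}(\T))$ (possibly shrinking $T$), where $v$ is the mKdV2 solution with $v(0)=u_0$.

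Assume for contradiction that a distributional solution $u$ as in (i)--(ii) exists. Combining $u_N\to u$ with $v_N\to v$ and the identity $u_N=e^{iP_N t}v_N$ gives $e^{iP_N t}v(t)\to u(t)$ in $\FL^{s,p}(\T)$, uniformly in $t\in[-T,T]$. In particular, for each $n\in\Z$, $e^{iP_N t}\ft{v}(t,n)\to\ft{u}(t,n)$ in $\C$. The hypothesis $|P(\P_{\leq N}u_0)|\to\infty$ forces $u_0$ to have a nonzero mode at some $n_0\neq 0$; by continuity of $v$, $\ft{v}(t,n_0)\neq 0$ on an interval $(-\dl,\dl)$, and for $t\in(0,\dl)$ dividing yields $e^{iP_N t}\to \be(t):=\ft{u}(t,n_0)/\ft{v}(t,n_0)$ with $|\be(t)|=1$.

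The contradiction would then follow from a Riemann--Lebesgue argument: for any $f\in L^1(0,\dl)$, dominated convergence gives $\int_0^\dl e^{iP_N t}f(t)\,dt\to\int_0^\dl \be(t)f(t)\,dt$, while $|P_N|\to\infty$ and Riemann--Lebesgue force the same integral to vanish, so $\be\equiv 0$ a.e., contradicting $|\be|=1$. The main obstacle I anticipate is the local well-posedness of the gauged mKdV2 equation in $\FL^{s,p}(\T)$ with continuous dependence on the initial data, needed for the convergence $v_N\to v$; since the gauge only excises a resonant diagonal term, this should be within reach of the same multilinear estimates used for Theorem~\ref{th:lwpreal}, though the details must be carried out carefully.
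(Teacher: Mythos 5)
Your proposal is correct and follows essentially the same route as the paper's proof: gauge the smooth solutions $u_N$ to mKdV2 solutions $v_N = e^{\mp i P_N t}u_N$ with $P_N := P(\P_{\leq N}u_0)$, invoke the local well-posedness and continuous dependence for mKdV2 \eqref{renorm2} (Theorem~\ref{th:lwp}) to obtain $v_N \to v$ in $C([-T',T'];\FL^{s,p}(\T))$, and then contradict $|P_N|\to\infty$ through a Riemann--Lebesgue argument. The only divergence is in the endgame, and both versions are valid: the paper pairs $v_N$ with a space-time test function to conclude $v_N \to 0$ in the sense of distributions, hence $v\equiv 0$ and $u_0 = v(0) = 0$, contradicting infinite momentum, whereas you extract a nonvanishing Fourier mode to force pointwise convergence of the unimodular phases $e^{\mp iP_N t}$ on a time interval, which dominated convergence plus Riemann--Lebesgue forbids.
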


\begin{remark}
	\noi(i) The second condition in Theorem~\ref{th:nonexistence} is a natural one to impose, as we would expect ``good'' solutions to have the property of being well-approximated by the smooth solutions corresponding to the truncated initial data.
	
	\smallskip 
	\noi(ii) The proof of Theorem~\ref{th:nonexistence} is based on the argument for the nonlinear Schr\"odinger equation by Guo-Oh \cite{GO}. The restricted range of $(s,p)$ follows from the need to use a local well-posedness result for a related equation \eqref{renorm2} (Theorem~\ref{th:lwp}). We do not believe this restriction to be sharp.
	
	\smallskip
	\noi(iii) An analogous non-existence result holds in the Sobolev spaces $H^s(\T)$ for $\frac13<s<\frac12$, i.e., outside $H^\frac12(\T)$. See Remark~\ref{rm:nonexist} for more details.
\end{remark}

Motivated by the ill-posedness result in Theorem~\ref{th:nonexistence}, we propose an alternative model to the complex-valued mKdV1 equation \eqref{renorm}.
Analogously to the first gauge transform $\Gg_1$ \eqref{gauge1}, which exploited conservation of mass, we introduce a second gauge transform $\Gg_2$ using the conservation of momentum to remove the singular contribution $iP(u)u$ from the nonlinearity. Given $u\in C(\R;H^\frac12(\T))$, we define the following invertible gauge transform
\begin{align*}
	\Gg_2 (u) (t,x) :=  e^{\mp i P(u) t} u(t,x).
\end{align*}

A direct computation shows that $v\in C(\R;H^\frac12(\T))$ solves mKdV1 \eqref{renorm} if and only if $u = \Gg_2(v)$ solves the \emph{second renormalized} mKdV equation (mKdV2)
\begin{align}
	\begin{cases}\displaystyle
		\partial_t u + \partial_x^3 u = \pm \Bigg( |u|^2 \partial_x u - \bigg(\fint_\T |u|^2 \, dx\bigg) \partial_x u - i\bigg( \fint_\T \Im(\wb{u} \partial_x u ) dx\bigg) u \Bigg), \\
		u\vert_{t=0} = u_0.
	\end{cases}
	\label{renorm2}
\end{align}
Focusing on the Fourier-Lebesgue spaces, for $1\leq p <\infty$ and $s> 1-\frac1p$, the gauge transform $\Gg_2$ is well-defined in $C(\R;\FL^{s,p}(\T))$ and the equations mKdV1 \eqref{renorm} and mKdV2 \eqref{renorm2} are equivalent. However, for $2\leq p<\infty$ and $\frac12 \leq s \leq 1 - \frac1p$, we have that $\FL^{s,p}(\T) \not\embeds H^\frac12(\T)$. Since outside $H^\frac12(\T)$ the momentum may be infinite, we cannot make sense of the gauge transform $\Gg_2$, and thus cannot, in general, convert solutions of mKdV2 \eqref{renorm2} into solutions of mKdV1~\eqref{renorm}.

Although any renormalization is a matter of choice, we believe that Theorem~\ref{th:nonexistence} provides evidence for our choice of $\Gg_2$. In particular, since the assumption of infinite momentum of the initial data $u_0$ can only hold if $u_0\not\in H^\frac12(\T)$, we propose mKdV2 \eqref{renorm2} as the correct model to study the complex-valued mKdV equation \eqref{mkdv} outside $H^\frac12(\T)$. To further our evidence, we establish the following the local well-posedness result for mKdV2 \eqref{renorm2} outside $H^\frac12(\T)$.
\begin{theorem}
	Let $(s,p)$ satisfy one of the following conditions: \textup{(i)} $\frac12 \leq s < \frac34$, $1\leq p <\frac{4}{3-4s}$; \textup{(ii)} $s\geq \frac34$, $1\leq p <\infty$. Then, mKdV2 \eqref{renorm2} is locally well-posed in $\FL^{s,p}(\T)$.\label{th:lwp}
\end{theorem}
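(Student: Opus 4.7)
The plan is to prove Theorem~\ref{th:lwp} via a contraction mapping argument in the $X^{s,\frac12}_{p,2}(T)$-spaces adapted to $\FL^{s,p}(\T)$, following the same template as the proof of Theorem~\ref{th:lwpreal}. First I would cast mKdV2 in Duhamel form
\[
u(t) = S(t)u_0 + \int_0^t S(t-t')\,\N^*(u)(t')\,dt',
\]
with $S(t)=e^{-t\partial_x^3}$, where, reading off \eqref{Cnonlinear} after subtracting the momentum contribution $iP(u)u$ removed by the gauge $\Gg_2$, the mKdV2 nonlinearity on the Fourier side is
\[
\ft{\N^*(u)}(n) = \sum_{\nbar\in\Lambda(n)} in_3\,\ft u(n_1)\,\wb{\ft u(-n_2)}\,\ft u(n_3) \;-\; in\,|\ft u(n)|^2\,\ft u(n).
\]
Together with the standard linear and energy estimates for the $X^{s,b}_{p,q}$ framework (which parallel the $L^2$-based ones), local well-posedness reduces to proving a trilinear estimate of the form
\[
\|\N^*(u_1,u_2,u_3)\|_{X^{s,-\frac12+\eps}_{p,2}(T)} \les T^{\ta}\prod_{j=1}^3 \|u_j\|_{X^{s,\frac12}_{p,2}(T)}
\]
for some $\ta,\eps>0$, under the assumptions on $(s,p)$, plus the corresponding difference estimate.

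The key structural observation is that the mKdV2 nonlinearity $\N^*$ coincides term-by-term with the nonlinearity of the \emph{real-valued} mKdV1 equation: in the real-valued setting one has $P(u)\equiv 0$, so \eqref{renorm} already retains exactly the $\Lambda(n)$-sum together with the resonant piece $-in|\ft u(n)|^2\ft u(n)$. Consequently, the trilinear estimate required here is structurally the same as the one used in Theorem~\ref{th:lwpreal}; the only modification in the complex-valued setting is that one of the three factors carries a complex conjugate, which is transparent to the frequency magnitudes and to the resonance identity
\[
\Phi(\nbar) := n^3 - n_1^3 - n_2^3 - n_3^3 = 3(n_1+n_2)(n_1+n_3)(n_2+n_3),
\]
for $n=n_1+n_2+n_3$. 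The restriction of the summation to $\Lambda(n)$ precisely excludes the zero set of $\Phi$, so $|\Phi|\ges |n_1+n_2||n_1+n_3||n_2+n_3|$ is nontrivial and can be used, through the dual $X^{s,b}_{p,2}$ norm, to transfer the derivative loss $in_3$ onto a high-modulation factor. The diagonal resonant term $-in|\ft u(n)|^2\ft u(n)$ is treated separately by $\ell^p$-convolution bounds of Young/Hausdorff--Young type.

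The main obstacle is running the case analysis for the $\Lambda(n)$-sum in the Fourier-Lebesgue setting: without Plancherel one cannot invoke $L^4_{t,x}$-Strichartz directly, and instead one must combine the modulation gain $|\Phi|^{\frac12-\eps}$ with $\FL$-adapted counting estimates of the form
\[
\#\{\nbar\in\Lambda(n) : |\Phi(\nbar)|\sim M,\ |n_i|\sim N_i\ (i=1,2,3)\} \les \text{(polynomial in }N_i, M\text{)},
\]
as developed for $X^{s,b}_{p,2}$ in the spirit of Nguyen~\cite{N}, Bourgain~\cite{BO2}, and the analogous analysis underlying Theorem~\ref{th:lwpreal}. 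The threshold $p<\frac{4}{3-4s}$ for $\frac12\leq s<\frac34$ in the statement of Theorem~\ref{th:lwp} is precisely what these counting bounds allow, matching the range in Theorem~\ref{th:lwpreal}. Once the trilinear estimate and its difference counterpart are in hand, the contraction argument on a ball in $X^{s,\frac12}_{p,2}(T)$ of radius comparable to $\|u_0\|_{\FL^{s,p}}$, with $T=T(\|u_0\|_{\FL^{s,p}})>0$ chosen small, yields existence, uniqueness in the Bourgain class, and Lipschitz continuous dependence on the initial data, completing the proof.
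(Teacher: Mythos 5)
Your skeleton---Duhamel formulation, contraction in a Fourier--Lebesgue adapted Bourgain space, the observation that the mKdV2 nonlinearity $\N^*$ is structurally identical to the real-valued mKdV1 nonlinearity (the $\Lambda(n)$-sum plus the diagonal term $-in|\ft u(n)|^2\ft u(n)$), and a trilinear estimate driven by the resonance factor $\Phi(\nbar)$---is exactly the paper's strategy (Proposition~\ref{prop:nonlinear} serves both Theorem~\ref{th:lwpreal} and Theorem~\ref{th:lwp}). The gap is in the estimate you propose to run it on: the bound
\begin{equation*}
\|\N^*(u_1,u_2,u_3)\|_{X^{s,-\frac12+\eps}_{p,2}(T)} \les T^{\ta}\prod_{j=1}^3 \|u_j\|_{X^{s,\frac12}_{p,2}(T)}
\end{equation*}
is \emph{false} for every $\eps>0$, every $s$ and every $p$. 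Take $\eta$ a fixed bump in time, $n_1=n_2=1$, $n_3=N\gg1$, and $u_j(t,x)=\eta(t)e^{i(n_jx+n_j^3t)}$. The only interaction is $(1,1,N)\in\Lambda(N+2)$, with $\Phi(\nbar)\sim -6N^2$, so the output sits at the single frequency $n=N+2$ with modulation $|\tau-n^3|\sim N^2$ and amplitude $\sim N$ from the multiplier $in_3$. One computes
\begin{equation*}
\|\N^*(u_1,u_2,u_3)\|_{X^{s,-\frac12+\eps}_{p,2}} \sim N^{s}\cdot N\cdot \jb{N^2}^{-\frac12+\eps} \sim N^{s+2\eps},
\qquad
\prod_{j=1}^3\|u_j\|_{X^{s,\frac12}_{p,2}} \sim N^{s},
\end{equation*}
and the ratio $N^{2\eps}$ diverges (the $T^\ta$ prefactor is useless, as the divergence is in $N$ with $T$ fixed). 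The mechanism you describe cannot avoid this: in this high$\times$low$\times$low interaction the frequency weights give $\jb{n}^s\prod_j\jb{n_j}^{-s}\sim 1$, the derivative loss is $N\sim|\Phi|^{\frac12}$, and your modulation gain is only $|\Phi|^{\frac12-\eps}$; the deficit $|\Phi|^{\eps}$ is unbounded. At $\eps=0$ the ratio is $O(1)$, i.e., the estimate is saturated with zero room---this is precisely the classical periodic rigidity the paper invokes when it says one \emph{must} work at $b=\frac12$ (see the discussion before Definition~\ref{def:xsb}). Consequently the paper gains its small factor differently: the output exponent stays at exactly $-\frac12$, two input factors are measured in $X^{s,\frac12-\nu}_{p,2}$, and Lemma~\ref{lm:time} converts this into $T^\delta$.

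This gap propagates into your choice of contraction space. Once the output exponent is locked at $-\frac12$, the fixed point can only be placed in $X^{s,\frac12}_{p,2}(T)$, which does \emph{not} embed into $C([-T,T];\FL^{s,p}(\T))$ (the embedding $X^{s,b}_{p,q}\embeds C(\R;\FL^{s,p})$ requires $b>\frac12$ when $q=2$), so a contraction there does not by itself produce a solution in $C([-T,T];\FL^{s,p}(\T))$, i.e., it does not prove the theorem as stated. The paper's resolution is to contract in $Z^{s,\frac12}_p(T)=X^{s,\frac12}_{p,2}(T)\cap X^{s,0}_{p,1}(T)$, which does embed into $C([-T,T];\FL^{s,p}(\T))$, at the cost of proving a second component of the nonlinear estimate, namely the $X^{s,-1}_{p,1}$ bound for both $\mathcal{NR}$ and $\mathcal{R}$ (Part~2 and Part~3 of the proof of Proposition~\ref{prop:nonlinear}), alongside the $X^{s,-\frac12}_{p,2}$ bound. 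So your outline requires two structural repairs, both of which are exactly what the paper implements: replace the $\eps$-improved output exponent by the input-side gain $X^{s,\frac12-\nu}_{p,2}$ plus Lemma~\ref{lm:time}, and replace the contraction space $X^{s,\frac12}_{p,2}(T)$ by $Z^{s,\frac12}_p(T)$ together with the accompanying $\l^1_\tau$-type estimate on the nonlinearity.
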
%

The restriction $s\geq \frac12$ is necessary if we require uniform continuity of the solution map, as shown by the following proposition.
\begin{proposition}Let $s<\frac12$ and $1\leq p < \infty$. The data-to-solution map for mKdV2 \eqref{renorm2} fails to be locally uniformly continuous in $C(\R;\FL^{s,p}(\T))$.
	\label{prop:uc}
\end{proposition}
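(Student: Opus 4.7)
The plan is to adapt the classical plane wave construction of Christ-Colliander-Tao to the mKdV2 equation \eqref{renorm2}. First, I would compute the nonlinearity of \eqref{renorm2} evaluated on a single exponential
\[
u(t,x) = a\, e^{i(Nx - \omega t)}, \qquad a>0,\ N\in\NB.
\]
Since $|u|^2 \equiv a^2$ is constant in $x$, the first two terms in the mKdV2 nonlinearity exactly cancel; on the other hand $\Im(\wb{u}\,\partial_x u) \equiv Na^2$ is nonzero and gives a contribution $\mp i N a^2 u$. Matching with $\partial_t u + \partial_x^3 u = -i\omega u - iN^3 u$, one finds that $u$ solves \eqref{renorm2} if and only if the dispersion relation
\[
\omega = -N^3 \pm N a^2
\]
holds. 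This amplitude-dependent frequency shift is the entire source of the obstruction.

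Next, for $s<\frac12$ and $1\leq p<\infty$, and for $N\in\NB$ large and a parameter $\delta_N\in(0,1)$ to be chosen, I would consider the two plane wave solutions $u^{(j)}(t,x) = a_j e^{i(Nx-\omega_j t)}$, $j=1,2$, with the same base frequency $N$ but slightly different amplitudes
\[
a_1 = N^{-s}, \qquad a_2 = (1+\delta_N)N^{-s}.
\]
The initial data satisfy $\|u_0^{(j)}\|_{\FL^{s,p}} \sim 1$ and $\|u_0^{(1)} - u_0^{(2)}\|_{\FL^{s,p}} \sim \delta_N$, while the corresponding frequency difference is
\[
|\omega_1 - \omega_2| = N\,|a_1^2 - a_2^2| \sim N^{1-2s} \delta_N.
\]
Choosing the time $t_N$ so that the relative phase $(\omega_1-\omega_2)t_N$ equals $\pi$, i.e.\ $t_N \sim (N^{1-2s}\delta_N)^{-1}$, a direct computation gives
\[
\|u^{(1)}(t_N) - u^{(2)}(t_N)\|_{\FL^{s,p}} \;=\; \jb{N}^s\, \big|a_1 - a_2\, e^{i(\omega_1-\omega_2)t_N}\big| \;=\; \jb{N}^s(a_1+a_2) \;\sim\; 1.
\]

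Finally, it remains to arrange that simultaneously $\delta_N \to 0$ and $t_N \to 0$ as $N \to \infty$. Since $s<\frac12$ yields $1-2s>0$, any sequence with $\delta_N\to 0$ and $N^{1-2s}\delta_N \to \infty$ will do (for instance $\delta_N = (\log N)^{-1}$). This produces a family of pairs of initial data converging to one another in $\FL^{s,p}(\T)$ whose corresponding global smooth solutions are $\gtrsim 1$ apart at times $t_N \to 0$, which contradicts any local uniform continuity estimate for the data-to-solution map on a neighbourhood of the origin. The only substantive step is the algebraic verification that the mKdV2 nonlinearity on a plane wave reduces exactly to the stated phase shift; after that, the argument is essentially bookkeeping of $\FL^{s,p}$-norms and phases, and I do not anticipate any serious obstacle.
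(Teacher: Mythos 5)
Your proposal is correct and follows essentially the same route as the paper's appendix: both exploit the exact plane-wave solutions $u = a e^{i(Nx-\omega t)}$ of \eqref{renorm2}, whose amplitude-dependent phase rotation $\omega = -N^3 \pm N a^2$ (the paper records the analogous relation for \eqref{mkdv}, which differs only by a sign in the correction term) drives nearby data apart by $O(1)$ in $\FL^{s,p}$ at times $t_N \to 0$ precisely because $1-2s>0$. The only cosmetic difference is the parametrization — you fix the frequency $N$ and shrink the amplitude gap $\delta_N$, while the paper fixes the amplitude gap $1/n$ and sends $N_n \to \infty$ — which changes nothing of substance.
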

Proposition~\ref{prop:uc} shows that we cannot use a contraction mapping argument to prove the local well-posedness of mKdV in $\FL^{s,p}(\T)$ for $s<\frac12$. Thus, Theorem~\ref{th:lwp} is sharp with respect to the method, when $s$ is concerned. However, we do not believe the restriction on $p$ to be sharp.

In order to infer on the local well-posedness of mKdV1 \eqref{renorm} in $\FL^{s,p}(\T)$ for $2\leq p < \infty$ and $\frac12 \leq s <1 - \frac1p$, we must endow the momentum with a notion of conditional convergence at low regularity. Since the momentum is not a sign definite quantity, we want to exploit the possible cancellation between positive and negative frequencies. This is achieved in the following definition, by considering symmetric truncations of the momentum.
\begin{definition}
	
	Suppose that 
	$$P(\P_{\leq N} f) \text{ converges as } N\to\infty.$$ 
	Then, we say that $f$ has finite momentum and denote the limit by $P(f)$.
	\label{def:P}
\end{definition}

The following proposition validates our notion of finite momentum as follows: consider initial data $u_0\not\in H^\frac12(\T)$ with finite momentum in the sense of Definition~\ref{def:P}; then, not only does the corresponding solution $u$ to mKdV2 \eqref{renorm2} have finite momentum but the momentum is also conserved.

\begin{proposition}
	Let $(s,p)$ satisfy one of the following conditions: \textup{(i)} $\frac12 \leq s < \frac56$, $2\leq p <\frac{6}{5-6s}$; \textup{(ii)} $s\geq \frac56$, $2\leq p <\infty$. In addition, let $u_0 \in \FL^{s,p}(\T)$ with finite momentum in the sense of Definition~\ref{def:P} and $u\in C([-T,T]; \FL^{s,p}(\T))$ the corresponding solution to mKdV2 \eqref{renorm2}. Then, we have that
	\begin{align*}
		P\big(\P_{\leq N} u(t) \big)  \to P(u_0), \quad N\to\infty,
	\end{align*}
	and we denote the limit by $P\big(u(t)\big) \equiv P(u_0)$, for each $t\in[-T,T]$.
	\label{prop:Pconservation}
\end{proposition}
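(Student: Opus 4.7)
The plan is to exploit momentum conservation for smooth mKdV2-solutions together with a uniform high-frequency tail bound. Let $u_M\in C(\R; H^\infty(\T))$ be the smooth global mKdV2-solution with initial data $\P_{\leq M}u_0$ (a trigonometric polynomial). For each $M$, the full momentum $P(u_M(t))=\sum_{n\in\Z} n |\ft{u_M}(t,n)|^2$ is an absolutely convergent, conserved quantity --- this is precisely the identity that motivated the introduction of $\Gg_2$ --- so $P(u_M(t))=P(\P_{\leq M}u_0)$. Moreover, Theorem~\ref{th:lwp} together with continuous dependence on the data yields $u_M\to u$ in $C([-T,T];\FL^{s,p}(\T))$ on a uniform time interval as $M\to\infty$.

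The key step is to establish the uniform tail estimate
\[
\sup_{M\geq N}\sup_{t\in[-T,T]}\bigg| \sum_{|n|>N} n\,|\ft{u_M}(t,n)|^2 \bigg| \leq C\,N^{-\alpha}
\]
for some $\alpha=\alpha(s,p)>0$. By the fundamental theorem of calculus, this tail at time $t$ equals its value at $t=0$, namely $P(\P_{\leq M}u_0)-P(\P_{\leq N}u_0)$ --- which is Cauchy in $M$ and tends to $0$ as $N\to\infty$ by the finite-momentum hypothesis --- plus the time integral of its derivative. Using conservation $P(u_M)\equiv \mathrm{const}$ to rewrite $\tfrac{d}{ds}\sum_{|n|>N} = -\tfrac{d}{ds}\sum_{|n|\leq N}$ and computing directly (the dispersive term vanishes, and the local counterterm $-in|\ft{u}(n)|^2\ft u(n)$ from \eqref{Cnonlinear} is purely imaginary, so drops upon taking $\Re$), the problem reduces to bounding the time integral of
\[
2\Re\sum_{|n|\leq N}\sum_{\nbar\in\Lambda(n)} i\,n\,n_3\,\ft{u_M}(n_1)\,\wb{\ft{u_M}(-n_2)}\,\ft{u_M}(n_3)\,\wb{\ft{u_M}(n)}.
\]
Since the corresponding unrestricted sum vanishes by momentum conservation, the effective frequency configurations satisfy $\max\{|n|,|n_1|,|n_2|,|n_3|\}>N$, i.e.\ contain at least one high-frequency factor; this is the source of the $N^{-\alpha}$ gain. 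The estimate is then a quadrilinear energy bound, uniform in $M$, proved via the mKdV resonance identity $\Omega=3(n_1+n_2)(n_2+n_3)(n_1-n_3)$ (nonzero on $\Lambda$) together with the $X^{s,b}_{p,2}$-multilinear machinery underlying Theorem~\ref{th:lwp}. The additional factor of $n$ coming from the definition of $P$ represents a derivative loss, which accounts for the more restrictive range of $(s,p)$ in the proposition. I expect this quadrilinear estimate to be the main technical obstacle.

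Finally, the uniform tail bound combined with $P(u_M(t))=P(\P_{\leq M}u_0)$ gives
\[
|P(\P_{\leq N}u_M(t))-P(\P_{\leq M}u_0)|\leq CN^{-\alpha}
\]
uniformly in $M\geq N$ and $t\in[-T,T]$. Letting $M\to\infty$ with $N$ fixed, convergence of $u_M$ to $u$ in $\FL^{s,p}(\T)$ implies mode-wise convergence of Fourier coefficients, so the finite sum $P(\P_{\leq N}u_M(t))\to P(\P_{\leq N}u(t))$; by hypothesis $P(\P_{\leq M}u_0)\to P(u_0)$. Therefore $|P(\P_{\leq N}u(t))-P(u_0)|\leq CN^{-\alpha}$, which vanishes as $N\to\infty$, establishing the claimed convergence and the identification $P(u(t))\equiv P(u_0)$.
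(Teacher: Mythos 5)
Your proposal is correct and follows essentially the same route as the paper: approximate by smooth solutions $u_M$ with data $\P_{\leq M}u_0$, use conservation of $P$ for smooth solutions together with the convergence $u_M \to u$ from Theorem~\ref{th:lwp}, and control the tail $P(\P_{>N}u_M(t))$ by its value at $t=0$ (which tends to zero by the finite-momentum hypothesis, though only with some rate $\varepsilon_N \to 0$, not the $N^{-\alpha}$ you display) plus a uniform-in-$M$ energy estimate carrying an $N^{-\eps}$ gain. The quadrilinear bound you defer as ``the main technical obstacle'' is precisely Proposition~\ref{prop:momentumEstimate}, which the paper states in advance and proves in Section~\ref{sec:energy}, and which its own proof of Proposition~\ref{prop:Pconservation} likewise invokes as a black box; citing it (and correcting the resonance factor $(n_1-n_3)$ to $(n_1+n_3)$) makes your argument complete.
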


In order to show Proposition~\ref{prop:Pconservation}, we follow the argument by Takaoka-Tsutsumi and Nakanishi-Takaoka-Tsutsumi (see Lemma~2.5 in \cite{TT04} and Lemma~3.1 in \cite{NTT}) and estimate the difference of momentum at time $t\in[-T,T]$ and at the initial time. Namely, we require the following energy estimate.
\begin{proposition}\label{prop:momentumEstimate}
	Let $(s,p)$ satisfy one of the following conditions: \textup{(i)} $\frac12 \leq s < \frac56$, $2\leq p <\frac{6}{5-6s}$; \textup{(ii)} $s\geq \frac56$, $1\leq p <\infty$, and $u_0\in H^\infty(\T)$. Let $u$ be a smooth solution of \eqref{renorm2} with $u\vert_{t=0}=u_0$. Then, the following estimate holds
	\begin{align*}
		\big| P(\P_{>N}u(t)) - P(\P_{>N}u(0)) \big| \lesssim \frac{1}{N^\eps} \bigg( \sup_{t'\in[0,t]}\| u(t')\|^4_{\FL^{s,p}} + \| u \|^4_{X^{s,\frac12}_{p,2}} + \| u \|^6_{X^{s,\frac12}_{p,2}}\bigg),
	\end{align*}
	for $t\in\R$, any $N\in\NB$ and $0<\eps\ll 1$ small enough, where $\P_{>N} = \Id - \P_{\leq N}$.
	
\end{proposition}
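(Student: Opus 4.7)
The plan is to use the fundamental theorem of calculus to write the difference of momentum as a time integral of a quadrilinear form in $\hat u$, then bound this form by combining the mKdV cubic resonance identity with a normal-form integration by parts in time and trilinear estimates in the $X^{s,\frac12}_{p,2}$-scale used to prove Theorem~\ref{th:lwp}. The $N^{-\eps}$ gain will come from a small amount of spare regularity available in those trilinear estimates for the $(s,p)$ ranges in the statement.

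Differentiating $P(\P_{>N}u)=\sum_{|n|>N} n|\hat u(n)|^2$ in time and using $\partial_t\hat u(n)=in^3\hat u(n)+\widehat{\N_2(u)}(n)$, the linear dispersive part vanishes after taking $\Re$, and the diagonal resonance $-in|\hat u(n)|^2\hat u(n)$ coming from $\widehat{\N_2(u)}(n)$ also gives zero. After the change of variables $n_2\mapsto -n_2$ (so that $n_1-n_2+n_3=n$ and $(n_1-n_2)(n_2-n_3)(n_1+n_3)\neq 0$) and symmetrization in $(n_1,n_3)$, the weight $n_3$ is replaced by $\tfrac12(n_1+n_3)=\tfrac12(n+n_2)$, so the resulting multilinear form has coefficient $n(n_1+n_3)$. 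Passing to the interaction picture $v(n)=e^{-in^3 t}\hat u(n)$, it carries the cubic phase
\[
\Phi=n_1^3-n_2^3+n_3^3-n^3=3(n_1-n_2)(n_2-n_3)(n_1+n_3),
\]
which is non-vanishing on the summation domain. The key algebraic observation is that the weight and the phase share the factor $(n_1+n_3)$; this is precisely the cancellation enforced by the mKdV2 renormalization and is what will let $P(u)$ propagate below $H^{1/2}$.

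Integrating in $t'\in[0,t]$, I split the quadrilinear form by the size of the maximal modulation $\s_{\max}=\max_j|\tau_j-n_j^3|$ relative to $|\Phi|$. In the high-modulation regime $\s_{\max}\gtrsim |\Phi|^{1/2}$, I place the factor realizing $\s_{\max}$ in the $X^{s,\frac12}_{p,2}$-norm to absorb the $|\Phi|^{1/2}$ weight, and bound the contribution by $\|u\|^4_{X^{s,\frac12}_{p,2}}$ via the same trilinear Fourier--Lebesgue estimate behind the proof of Theorem~\ref{th:lwp}. In the low-modulation regime, I perform a normal form by writing $e^{it'\Phi}=(i\Phi)^{-1}\partial_{t'}e^{it'\Phi}$ and integrating by parts in $t'$; the cancellation above reduces the symbol to $n/\bigl(3(n_1-n_2)(n_2-n_3)\bigr)$, which is harmless since the denominators are nonzero integers. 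The boundary terms at $t'=0,t$ are time-pointwise quadrilinear in $\hat u$, bounded by H\"older in $\FL^{s,p}$, contributing $\sup_{t'\in[0,t]}\|u(t')\|^4_{\FL^{s,p}}$. The interior term carries $\partial_{t'}$ on one of the $v(n_j)$'s; substituting $\partial_{t'}v(n_j)=e^{-it'n_j^3}\widehat{\N_2(u)}(n_j)$ expands it into a sextilinear form, bounded by $\|u\|^6_{X^{s,\frac12}_{p,2}}$ via another application of the trilinear $X^{s,\frac12}_{p,2}$-estimate.

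For the $N^{-\eps}$ decay, the frequency restriction $|n|>N$ together with $n=n_1-n_2+n_3$ forces $\max\{|n|,|n_1|,|n_2|,|n_3|\}\gtrsim N$; since the admissible ranges (i)--(ii) of the proposition lie strictly inside the corresponding ranges of Theorem~\ref{th:lwp} (e.g.\ $p<\frac{6}{5-6s}<\frac{4}{3-4s}$ for $\frac12\leq s<\frac34$), there is $\eps'>0$ of spare regularity in each trilinear estimate that can be traded for $\jb{n_{\max}}^{-\eps'}\leq N^{-\eps'}$, producing the claimed $N^{-\eps}$ gain in every contribution. The main obstacle I anticipate is the delicate case analysis in the low-modulation regime: the residual symbol $n/\bigl((n_1-n_2)(n_2-n_3)\bigr)$, together with the frequency projections imposed by $\Lambda(n)$, must still fit inside a sharp Fourier--Lebesgue multilinear bound in the $\ell^p_n L^2_\tau$-scale, and the worst case---where $|n_1-n_2|$ and $|n_2-n_3|$ are both $O(1)$ while $|n|\sim N$ is large---is what drives the restriction $p<\frac{6}{5-6s}$ for $\frac12\leq s<\frac56$.
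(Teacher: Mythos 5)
Your skeleton---fundamental theorem of calculus, normal form by integrating the phase $e^{it'\Phi}$ by parts in time, boundary terms via H\"older in $\FL^{s,p}$, interior terms sextilinear after substituting the equation---is the same as the paper's, and your symmetrization $n_3\mapsto\tfrac12(n_1+n_3)$, which makes the weight share the factor $(n_1+n_3)$ with $\Phi$, is a genuinely nice cleanup of the boundary-term symbol (the paper instead keeps the weight $nn_3$ and runs a six-case frequency analysis, invoking the normal form only when $|n_1|\sim|n_2|\sim|n_3|$ or $|n_3|^{\frac12}\gg|n_1|,|n_2|$). The fatal gap is your one-line treatment of the interior terms. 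Once you substitute $\partial_{t'}\ft{v}(n_j)$ by the nonlinearity, the resulting sextilinear form oscillates with the \emph{combined} phase $\Phi(\nbar)+\Phi(\mbar)$, where $\mbar$ denotes the frequencies inside the substituted factor. On the region $|\Phi(\mbar)|\gg|\Phi(\nbar)+\Phi(\mbar)|$ (the paper's \eqref{resonant_bad}) the two resonance functions nearly cancel, so no modulation is available to absorb either denominator, and ``another application of the trilinear $X^{s,\frac12}_{p,2}$-estimate'' is exactly what cannot be done: the proof of Proposition~\ref{prop:nonlinear} rests on $|\Phi(\mbar)|\lesssim\sigma_{\max}$, which fails here. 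This region has to be treated by hand (Strichartz plus a case analysis in $\mbar$); it is the source of the ``additional resonances'' mentioned in the remark after the proposition, and in the sub-case $n_3+m_3=0$ with all remaining frequencies $\ll|n_3|^{\frac12}$ no multiplier bound closes at all---the paper survives only because, after symmetrizing the inner sums, that contribution vanishes identically ($J=0$). Nothing in your argument produces or replaces this exact cancellation, and without it the sextilinear part does not close at any $s<\tfrac12$ in scaling.

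Two further problems. First, the trilinear estimate cannot be black-boxed even in your ``high-modulation'' regime: the quadrilinear form carries the weight $n(n_1+n_3)$---two derivatives---whereas the estimate behind Theorem~\ref{th:lwp} controls the single derivative $n_3$; pairing $\N^*(u)$ against $n\,\P_{>N}u$ by duality would require bounding an $\ell^{p'}_n$-based norm of $u$ by its $\ell^p_n$-based norm, which fails precisely for the range $p\geq2$ of the proposition. So this regime needs its own multiplier/Strichartz analysis (the paper's Part~1, Cases~1--4, where the multiplier $|nn_3|/|\Phi|^{\frac12}$ is controlled after gaining the full $|\Phi|^{\frac12}$; note your threshold $\sigma_{\max}\gtrsim|\Phi|^{\frac12}$ only buys $|\Phi|^{\frac14}$ from the $b=\tfrac12$ weight). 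Second, your attribution of the constraint $p<\frac{6}{5-6s}$ is off: the near-diagonal boundary-term configuration you single out closes already for $s>\tfrac12-\tfrac1{2p}$; the binding restriction $s>\tfrac56-\tfrac1p$ enters through the $L^6$-Strichartz embeddings of $\FL^{s,p}$ into $X^{\frac13+,\cdot}_{2,2}$, used both in the direct quadrilinear estimates and throughout the sextilinear analysis. These last points are repairable with work, but the missing treatment of the combined resonance $\Phi(\nbar)+\Phi(\mbar)$, including the vanishing-by-symmetry case, is where the actual difficulty of the proposition lies.
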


\begin{remark}\rm\
	In \cite{NTT}, the energy estimate holds in $H^\s(\T)$ for $\s>\frac13$. Taking into account that the Fourier-Lebesgue spaces $\FL^{s,p}$ scale like $H^\s$ for $\s = s +\frac1p - \frac12$, $2\leq p <\infty$, the condition $s>\frac56-\frac1p$ agrees with the restriction in \cite{NTT}. We would like to relax the regularity constraints to $s>\frac34-\frac1p$, to match the local well-posedness of mKdV2 \eqref{renorm2} (Theorem~\ref{th:lwp}). In fact, some contributions in the estimate can be controlled at this regularity. In the most difficult cases, the normal form approach assures that the estimate holds outside $H^\frac12(\T)$, but it also introduces additional resonances. Consequently, we cannot use the modulations to help estimate the multiplier, which imposes the condition $\s> \frac13$. Nevertheless, these heuristics do not imply the failure of the estimate for lower regularity, $s\leq \frac56-\frac1p$ and $\s\leq \frac13$.
\end{remark}

As a consequence of the conservation of momentum at low regularity in Proposition~\ref{prop:Pconservation}, we have the following existence result for mKdV1 \eqref{renorm}.
\begin{proposition}
	Let $(s,p)$ satisfy one of the following conditions: \textup{(i)} $\frac12 \leq s < \frac56$, $2\leq p <\frac{6}{5-6s}$; \textup{(ii)} $s\geq \frac56$, $2\leq p <\infty$, and $u_0\in\FL^{s,p}(\T)$ with finite momentum, in the sense of Definition~\ref{def:P}. Then, there exists $T>0$ and a function $u\in C([-T,T];\FL^{s,p}(\T))$ with $u\vert_{t=0} = u_0$ such that $u$ satisfies the following equation:
	\begin{align*}
		\partial_t u + \partial_x^3 u = \pm \N(u),
	\end{align*}
	in the sense of distributions, 
	where $\N(u) = \N^*(u) + i P(u) u$, where $P(u)$ is interpreted as the limit of $\{P(\P_{\leq N} u) \}_{N\in\NB}$, as $N\to\infty$.
	\label{prop:weak}
\end{proposition}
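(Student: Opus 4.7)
The plan is to obtain $u$ by undoing the second gauge transform $\Gg_2$ on the mKdV2 solution. First, since the range of $(s,p)$ is contained in that of Theorem~\ref{th:lwp}, I apply that theorem to the initial datum $u_0$ to produce a distributional solution $v\in C([-T,T];\FL^{s,p}(\T))$ of \eqref{renorm2} with $v|_{t=0}=u_0$, for some $T=T(\|u_0\|_{\FL^{s,p}})>0$. The range in the present proposition also matches that of Proposition~\ref{prop:Pconservation}, so the momentum of $v(t)$ is well-defined in the sense of Definition~\ref{def:P} and conserved:
\[
P(\P_{\leq N} v(t)) \to P(u_0) \quad \text{as } N\to\infty, \quad \text{for every } t\in[-T,T].
\]

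Next, set $u(t,x):= e^{\pm i P(u_0) t} v(t,x)$, where the sign is chosen to match the $\pm$ in \eqref{renorm2}. Since multiplication by a unimodular constant (in $x$) preserves the Fourier-Lebesgue norm, $u\in C([-T,T];\FL^{s,p}(\T))$ and $u|_{t=0}=u_0$. Moreover, $|\ft{u}(t,n)|=|\ft{v}(t,n)|$ for every $n\in\Z$, hence
\[
P(\P_{\leq N} u(t)) = P(\P_{\leq N} v(t)) \to P(u_0) \quad \text{as } N\to\infty,
\]
so $u(t)$ has finite momentum in the sense of Definition~\ref{def:P} and the decomposition $\N(u) = \N^*(u) + i P(u) u$ on the right-hand side of the equation is meaningful, with $P(u(t))\equiv P(u_0)$.

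The nonlinearity $\N^*$ is the cubic expression
\[
\ft{\N^*(w)}(n) = \sum_{\nbar\in\Lambda(n)} i n_3\, \ft{w}(n_1) \wb{\ft{w}}(-n_2) \ft{w}(n_3) - i n |\ft{w}(n)|^2 \ft{w}(n),
\]
which is homogeneous of degree one in any global phase factor, i.e.\ $\N^*(e^{i\theta} w) = e^{i\theta}\N^*(w)$ for every $\theta\in\R$. Applying this with $w=v$ and $\theta=\pm P(u_0)t$, together with the distributional identity $\partial_t v + \partial_x^3 v = \pm \N^*(v)$, a direct computation gives
\[
\partial_t u + \partial_x^3 u = \pm i P(u_0) u \pm e^{\pm i P(u_0) t}\N^*(v) = \pm\bigl( \N^*(u) + i P(u_0) u\bigr) = \pm\N(u),
\]
where the time derivative of the smooth scalar factor is harmless in the sense of distributions on $(-T,T)\times\T$. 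This is precisely the equation claimed.

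The only substantive input is Proposition~\ref{prop:Pconservation}, which ensures the existence of the limit defining $P(u(t))$ and its time-independence; everything else is the bookkeeping of the gauge transform and the cubic homogeneity of $\N^*$. I do not anticipate genuine obstacles beyond verifying that the phase factor $e^{\pm i P(u_0)t}$, being smooth in $t$ and constant in $x$, legitimately transports the distributional equation for $v$ into a distributional equation for $u$, which is straightforward.
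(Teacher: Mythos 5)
Your proof is correct, and it arrives at the same candidate solution $u = e^{\pm i P(u_0)t}v$ as the paper, but it verifies the equation by a genuinely different and shorter route. The paper never performs the gauge computation on the limit object: it gauges each smooth approximation, $u_N = \Gg_2^{-1}(v_N) = e^{\pm i P(u_{0N})t}v_N$ with $v_N$ the smooth mKdV2 solution with data $\P_{\leq N}u_0$, proves that $\{u_N\}_{N\in\NB}$ is Cauchy in $Z^{s,\frac12}_p(T)$ using Lemma~\ref{lm:linear}, Proposition~\ref{prop:nonlinear} and the convergence $P(u_{0N})\to P(u_0)$ (this is the bulk of its proof), and then passes to the limit in the equation term by term, again via the trilinear estimate. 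You instead feed the solution $v$ of Theorem~\ref{th:lwp} into Proposition~\ref{prop:Pconservation}, so that the gauge factor is an explicit $x$-independent phase with the conserved frequency $P(u_0)$, and transport the distributional equation for $v$ through the exact equivariance $\N^*(e^{i\theta}w)=e^{i\theta}\N^*(w)$. Your route isolates the genuine analytic input (conservation of momentum at low regularity) and skips the Cauchy-sequence estimates entirely; what it gives up is the extra conclusion built into the paper's construction, namely that $u$ is the $Z^{s,\frac12}_p(T)$-limit of the smooth mKdV1 solutions with truncated data --- precisely the ``good approximation'' property (ii) of Theorem~\ref{th:nonexistence}, which is what makes this solution the canonical one in view of the non-existence result. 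One step you should make explicit: for $\N^*(u)$ to be a canonically defined distribution you need $u\in X^{s,\frac12}_{p,2}(T)$, not merely $u\in C([-T,T];\FL^{s,p}(\T))$. This does hold, because multiplication by $e^{\pm i P(u_0)t}$ acts as a translation by $\pm P(u_0)$ in the $\tau$ variable, giving $\|u\|_{X^{s,\frac12}_{p,2}(T)}\lesssim \jb{P(u_0)}^{\frac12}\|v\|_{X^{s,\frac12}_{p,2}(T)}$ (a bound the paper itself records), and then the identity $\N^*(u)=e^{\pm i P(u_0)t}\N^*(v)$ is legitimate since the equivariance is an algebraic identity at each fixed time and both sides depend continuously on $v\in X^{s,\frac12}_{p,2}(T)$.
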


\begin{remark}\rm In order to establish the existence of solutions for the complex-valued mKdV1 equation \eqref{renorm}, we needed the following three ingredients: (i) a notion of finite momentum for the initial data, which exploited the sign indefinite nature of momentum; (ii) to show that the notion of finite momentum was strong enough to guarantee that the corresponding solutions would also have finite momentum; and (iii) that the momentum of solutions is actually conserved. Points (ii) and (iii) follow from the energy estimate in Proposition~\ref{prop:momentumEstimate}, which is responsible for the regularity constraint in Proposition~\ref{prop:weak}.
\end{remark}

We conclude this section by stating some further remarks.

\begin{remark}\rm 
	We can also consider the question of invariance of the Gibbs measure for the complex-valued mKdV equation \eqref{mkdv} and the well-posedness of this equation with randomized initial data. In particular, initial data of the following form
	\begin{align*}
		u_0(x;\omega) = \sum_{n\neq 0} \frac{g_n(\omega)}{|n|} e^{inx},
	\end{align*}
	where $\{g_n\}_{n\in\NB}$ is a family of independent standard complex-valued Gaussian random variables, i.e., real and imaginary parts are independent Gaussian random variables, with mean 0 and variance 1. It is known that $u_0 \in H^{\frac12 -}(\T) \setminus H^{\frac12}(\T)$ almost surely, therefore it is unclear if the corresponding solutions would satisfy conservation of momentum. 
	However, we can show that its momentum is finite almost surely, which gives some hope of proving the invariance of the Gibbs measure in the complex-valued setting. The momentum is given by the following quantity
	\begin{align*}
		P(u_0(\omega)) & = \sum_{n \geq 1} \frac{|g_n(\omega)|^2 - |g_{-n}(\omega)|^2}{n} = \sum_{n\neq 0} \frac{|g_n(\omega)|^2}{n}.
	\end{align*}
	Therefore, using Isserlis' Theorem we have
	\begin{align*}
		\E\big[ (P(u_0))^2 \big] & = \sum_{n,m \neq 0} \frac{\E\big[ g_n \wb{g_n} g_m \wb{g_m}\big]}{nm}
		=\sum_{n\neq 0} \frac{2 \E\big[ |g_n|^2 \big]^2}{n^2} 
		\lesssim \sum_{n\geq 1} \frac{1}{n^2} <\infty.
	\end{align*}
	Hence the momentum $P(u_0)$ is finite, almost surely.

\end{remark}

\begin{remark}\rm\label{rm:nonexist}
	The non-existence result in Theorem~\ref{th:nonexistence} is not particular to the Fourier-Lebesgue setting and can be extended to other spaces outside $H^\frac12(\T)$. In particular, the same result holds for initial data in $H^s(\T)$, $\frac13<s<\frac12$.
	By adapting the energy method in \cite{NTT} to the complex-valued setting, we can show that local well-posedness of mKdV2 \eqref{renorm2} holds in $H^s(\T)$ for $\frac13<s<\frac12$. In particular, for any sequence of smooth functions $\{u_{0n}\}_{n\in\NB}$ with $u_{0n} \to u_0$ in $H^s(\T)$, the corresponding smooth global solutions $\{u_n\}_{n\in\NB}$ converge to the solution $u$ of mKdV2 \eqref{renorm2} in $C([-T,T];H^s(\T))$. If we focus on the initial data $u_0\in H^s(\T) \setminus H^\frac12(\T)$ with infinite momentum in the following sense
	\begin{align*}
		|P\big(\P_{\leq N} u_0\big)| \to \infty \text{ as } N\to\infty, 
	\end{align*}
	we can show that there exists no distributional solution to the complex-valued mKdV1 equation \eqref{renorm} with initial data $u_0$. This follows the same argument as in the proof of Theorem~\ref{th:nonexistence}, using the local well-posedness of mKdV2 \eqref{renorm2} in $H^s(\T)$, $\frac13< s <\frac12$.
\end{remark}

\begin{remark} \rm

	The question of local well-posedness in the Fourier-Lebesgue spaces has also been pursued for the derivative nonlinear Schr\"odinger equation (DNLS):
	\begin{align*}
		i\partial_t u + \partial_x^2 u = \partial_x(|u|^2 u), \quad (t,x) \in \R\times \T.
	\end{align*}
	This study was initiated by Gr\"unrock-Herr in \cite{GH} where they established local well-posedness of DNLS in $\FL^{s,p}(\T)$ for $s\geq \frac12$ and $1 \leq p <4$, using the Fourier restriction norm method. 
	An optimal result was later established by Deng-Nahmod-Yue in \cite{DNY} through a new method inspired by probabilistic techniques. As in the case of mKdV \eqref{mkdv}, the main difficulty in the low regularity well-posedness theory for DNLS is handling the derivative loss arising from the nonlinearity. In order to overcome this problem, Herr \cite{Herr} introduced the following gauge transform
	\begin{align*}
		\Gg (u) (t,x) = e^{-i\mathcal{I}(u)(t,x)} u(t,x),
	\end{align*}
	where $\mathcal{I}(u)$ is the mean zero anti-derivative of $|u|^2 - \int_\T |u|^2 dx$. The gauge transformation $\Gg$ removes the following singular contribution in the nonlinearity
	\begin{align}
		2 \bigg(\fint \Im (u \partial_x \conj{u}) dx\bigg) u \label{P1}.
	\end{align}
	In $\FL^{\frac12,p}(\T)$, $2\leq p <\infty$, the quantity \eqref{P1} is not well-defined, but the gauge transformation $\Gg$ is continuous and invertible, which allows for the recovery of solutions of DNLS from solutions of the gauged equation. 
	In this paper, in order to overcome the derivative loss, we introduced a gauge transformation $\Gg_2$ which removes the following contribution 
	\begin{align*}
		i \bigg(\fint \Im(u \partial_x \conj{u}) dx\bigg ) u.
	\end{align*}
	However, in our case, the gauge transformation $\Gg_2$ depends explicitly on the momentum, which is not well-defined outside $H^\frac12(\T)$. Thus, we cannot freely convert solutions of mKdV2 \eqref{renorm2} to solutions of mKdV1 \eqref{renorm}, a problem which is new to the complex-valued mKdV equation, when compared to DNLS.
	This additional difficulty, not present for DNLS, lead us to the introduction of a new notion of finite momentum (Definition~\ref{def:P}) and its conservation at low regularity (Proposition~\ref{prop:Pconservation}). Only then could we prove existence of solutions of mKdV1 \eqref{renorm} in Proposition~\ref{prop:weak}.

\end{remark}

\begin{remark} \rm 
	In \cite{KishimotoTsutsumi}, Kishimoto-Tsutsumi focused on the ill-posedness of the nonlinear Schr\"odinger equation with third order dispersion and Raman scattering term:
	\begin{align*}
		\partial_t u = \al_1 \partial_x^3 u + i \al_2 \partial_x^2 u + i \gamma_1 |u|^2 u + \gamma_2 \partial_x (|u|^2 u) - i \G u \partial_x (|u|^2), \quad (t,x)\in\R\times \T,
	\end{align*}
	for $\al_j,\g_j, \G \in \R$, $j=1,2$ satisfying $\G>0$, $\al_1\neq0$ and $\frac{2\al_2}{3\al_1} \not\in \Z$. 
	Note that for $\al_2=\g_1=0$, the equation resembles mKdV \eqref{mkdv}, however, this regime is not covered in their analysis.
	The last term, the Raman scattering term, is responsible for the ill-posedness of this equation and can be rewritten as follows
	\begin{align}
		\Ft\big(u \partial_x (|u|^2) \big) (n) = & - \sum_{\substack{n=n_1+ n_2+n_3\\(n_1+n_2)(n_2+n_3)\neq0}} i(n_1 + n_2) \ft{u}(n_1) \conj{\ft{u}}(-n_2) \ft{u}(n_3)\nonumber \\
		&- in \Bigg(\sum_{n_2} |\ft{u}(n_2)|^2 \Bigg) \ft{u}(n) 
		+ \Bigg(\sum_{n_2} in_2 |\ft{u}(n_2)|^2 \Bigg) \ft{u}(n).\label{raman}
	\end{align}
	The resonance relation for this equation is 
	\begin{align*}
		\Phi(n_1,n_2,n_3) = 3\al_1(n_1+n_2) (n_2+n_3) \bigg(n_3 + n_1 + \frac{2\al_2}{3\al_1}\bigg),
	\end{align*}
	therefore, $\Phi(n_1,n_2,n_3) = 0$ if and only if $(n_1+n_2)(n_2+n_3) = 0$.  Consequently, the first term on the right-hand side of \eqref{raman} corresponds to the non-resonant contribution, analogous to $\N^*(u)$ in our case (see \eqref{Cnonlinear})
	
	Delving deeper into the Raman scattering term, note that the last two contributions on the right-hand side of \eqref{raman} can be written on the physical side as  $\big( \int_\T|u|^2 dx \big) \partial_x u$ and $ iP(u) u$, respectively. In \cite{KishimotoTsutsumi}, the term that is responsible for the ill-posedness is the last contribution in \eqref{raman}, i.e., the one that depends on the momentum. For mKdV \eqref{renorm}, both the resonant contributions in \eqref{raman} are removed by the application of the gauge transformations $\Gg_1$ and $\Gg_2$, respectively. Moreover, it is also the contribution $iP(u)u$ which is responsible for the non-existence of solutions to mKdV1 \eqref{renorm} in low regularity.
\end{remark}

This paper is organized as follows. In Section~\ref{sec:notation}, we introduce some notation and function spaces along with their relevant properties. In Section~\ref{sec:nonlinear}, we establish the main trilinear estimate. In Section~\ref{sec:nonexistence}, we show the non-existence of solutions for initial data with infinite momentum (Theorem~\ref{th:nonexistence}). The influence of the momentum on low regularity well-posedness of mKdV1 \eqref{renorm} is explored further in Section~\ref{sec:momentum}, where we establish the conservation of momentum and the existence of solutions for the complex-valued equation with initial data with finite momentum. In order to show the conservation of momentum, we prove an energy estimate for smooth solutions of mKdV2 \eqref{renorm2} in Section~\ref{sec:energy}. In Section~\ref{sec:gwp}, by establishing a modified version of the trilinear estimate, we prove the global well-posedness for the real-valued mKdV1 equation \eqref{renorm} (Theorem~\ref{th:gwp}). Lastly, in Appendix~\ref{sec:ap}, we show the failure of local uniform continuity of the solution map for $s<\frac12$, in the context of the Fourier-Lebesgue spaces (Proposition~\ref{prop:uc}).

\section{Notation, function spaces and linear estimates}\label{sec:notation}
We start by introducing some useful notation. Let  $A\lesssim B$ denote an estimate of the form $A\leq CB$ for some constant $C>0$. Similarly, $A\sim B$ will denote $A\lesssim B$ and $B\lesssim A$, while $A\ll B$ will denote $A\leq \eps B$, for some small constant $ 0<\eps< 1$.
The notations $a+$ and $a-$ represent $a+\eps$ and $a-\eps$ for arbitrarily small $\eps>0$, respectively.
Lastly, our conventions for the Fourier transform are as follows.
The Fourier transform of $u: \R\times\T \to \C$ with respect to the space variable is given by
$$\Ft_x u(t,n) =  \frac{1}{2\pi}\int_\T u(t,x) e^{-inx} \ dx.$$
The Fourier transform of $u$ with respect to the time variable is given by
$$\Ft_t u(\tau,x) =  \frac{1}{2\pi}\int_\R u(t,x) e^{- it\tau} \ dt.$$
The space-time Fourier transform is denoted by $\Ft_{t,x} = \Ft_t \Ft_x$. For simplicity, we will drop the harmless factors of $2\pi$. We will use $\ft{u}$ to denote $\Ft_x u$, $\Ft_t u$ and $\Ft_{t,x} u$, but it will become clear which one it refers to from context, namely from the use of the spatial and time Fourier variables $n$ and $\tau$, respectively.

\smallskip

Now, we focus on the relevant spaces of functions. Let $\mathcal{S} (\R\times\T)$  denote the space of functions $u:\R\times\R\to\C$, with $u\in C^\infty(\R\times\T)$ which satisfy
\begin{align*}
	u(t,x+1) = u(t,x), \quad  \sup_{(t,x) \in \R\times\T} | t^\al \partial_t^\be \partial_x^\gamma u(t,x)| < \infty, \quad \al,\be,\gamma\in\mathbb{Z}_{\geq 0}.
\end{align*}
In \cite{BO2}, Bourgain introduced the $X^{s,b}$-spaces defined by the norm
\begin{align}
	\|u\|_{X^{s,b}} = \big\| \jb{n}^s \jb{\tau - n^3}^b \ft{u}(\tau,n) \big\|_{\l^2_n L^2_\tau}.
	\label{xsb}
\end{align}
In the following, we define the $X^{s,b}$-spaces adapted to the Fourier-Lebesgue setting (see Gr\"{u}nrock-Herr \cite{GH}).

\begin{definition}\label{def:xsb}
	Let $s,b\in\R$, $1\leq p,q\leq \infty$. The space $X^{s,b}_{p,q}(\R\times\T)$, abbreviated $X^{s,b}_{p,q}$, is defined as the completion of $\mathcal{S}(\R\times\T)$ with respect to the norm
	\begin{align*}
		\|u\|_{X^{s,b}_{p,q}} = \big\| \jb{n}^s \jb{\tau - n^3}^b \ft{u}(\tau, n) \big\|_{\l^p_n L^q_\tau}.
	\end{align*}
	When $p=q=2$, the $X^{s,b}_{p,q}$-spaces defined above reduce to the standard $X^{s,b}$-spaces defined in \eqref{xsb}.
\end{definition}

Recall the following embedding. For any $1\leq p <\infty$, 
\begin{align*}
	X^{s,b}_{p,q} (\R\times \T) &\embeds C (\R; \FL^{s,p}(\T)) \quad \text{for } b>\frac{1}{q'} = 1 - \frac1q.
\end{align*}

We want to conduct a contraction mapping argument in an appropriate $X^{s,b}_{p,2}$-space. As we see in Section~\ref{sec:nonlinear}, in order to establish a trilinear estimate, we must work with $b=\frac12$. However, this space fails to be embedded into $C(\R;\FL^{s,p}(\T))$. Therefore, instead of $X^{s,\frac12}_{p,2}$, we work in $Z^{s,\frac12}_p \embeds C(\R;\FL^{s,p}(\T))$, with $Z^{s,b}_p$ defined as follows
\begin{align*}
	Z^{s,b}_p := X^{s,b}_{p,2} \cap X^{s,b-\frac12}_{p,1},
\end{align*}
with $1\leq p <\infty$, $s\in\R$ and $b>0$.

To show the local well-posedness, we will use the local-in-time versions of these spaces.
\begin{definition}
	Let $s,b\in\R$, $1\leq p,q<\infty$ and $I\subset \R$ an interval. We define the restriction space $X^{s,b}_{p,q}(I)$ of all functions $u$ which satisfy
	\begin{align*}
		\|u\|_{X^{s,b}_{p,q}(I)} := \inf \Big\{ \|v\|_{X^{s,b}_{p,q}} : \ v\in X^{s,b}_{p,q}(\R\times \T), \ v\vert_{t\in I} = u  \Big\} <\infty,
	\end{align*}
	with the infimum taken over all extensions $v$ of $u$.
	If $I = [-T,T]$, for some $0<T\leq 1$, we denote the spaces by $X^{s,b}_{p,q}(T)$. The spaces $Z^{s,b}_p(I)$ are defined analogously.
\end{definition}

Let $S(t)$ denote the linear propagator of the Airy equation, defined as follows
\begin{align*}
	\Ft_x\big(S(t)u\big)(t,n) = e^{itn^3}\ft{u}(t,n).
\end{align*}
The following linear estimates are needed to show the local well-posedness (Theorems~\ref{th:lwpreal} and \ref{th:lwp}) (see \cite{OH1, GH} for analogous proofs).
\begin{lemma}\label{lm:linear} Let $1\leq p,q <\infty$ and $s,b\in\R$. Then, the following estimates hold:
	
	\begin{align*}
		\big\| S(t) u_0 \big\|_{Z^{s,b}_{p}(T)} &\lesssim \|u_0\|_{\FL^{s,p}}, \\
		\Bigg\| \int_0^t S(t-t') F(t') \ dt' \Bigg\|_{Z^{s,b}_p (T)} &\lesssim \|F\|_{Z^{s,b-1}_{p}(T)},
	\end{align*}
	for any $0<T\leq 1$.
\end{lemma}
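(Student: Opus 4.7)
My plan is to follow the standard $X^{s,b}$-calculus of Bourgain \cite{BO2}, observing that every step in that calculus occurs at a fixed spatial frequency $n$, so the $\ell^p_n$-norm of the Fourier-Lebesgue framework passes through each manipulation unchanged; consequently the proof is essentially identical to the familiar $p=2$ case, and I would refer to the Fourier-Lebesgue adaptations in \cite{OH1, GH} for the minor bookkeeping. First, I fix a bump function $\eta\in C_c^\infty(\R)$ with $\eta\equiv 1$ on $[-1,1]$ and $\supp\eta\subset[-2,2]$, and set $\eta_T(t) = \eta(t/T)$. Because $Z^{s,b}_p = X^{s,b}_{p,2}\cap X^{s,b-\frac12}_{p,1}$, it suffices to prove both inequalities with $Z^{s,b}_p$ replaced by either summand, abbreviated $X^{s,b'}_{p,q'}$ with $(b',q') \in \{(b,2), (b-\tfrac12, 1)\}$.

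For the homogeneous estimate, I would take $\eta_T(t)S(t)u_0$ as the extension of $S(t)u_0|_{[-T,T]}$ to $\R\times\T$. A direct Fourier computation gives
\[
\Ft_{t,x}\bigl(\eta_T(t)S(t)u_0\bigr)(\tau,n) = \widehat{\eta_T}(\tau-n^3)\,\widehat{u_0}(n),
\]
so the modulation variable $\tau-n^3$ and the spatial variable $n$ decouple:
\[
\|\eta_T(t)S(t)u_0\|_{X^{s,b'}_{p,q'}} = \|\jb{\tau}^{b'}\widehat{\eta_T}(\tau)\|_{L^{q'}_\tau}\cdot\|u_0\|_{\FL^{s,p}}.
\]
Schwartz decay of $\widehat{\eta_T}$ combined with $0<T\leq 1$ bounds the temporal factor uniformly, completing this step.

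For the Duhamel estimate, I would fix an extension $\widetilde F$ of $F$ with $\|\widetilde F\|_{Z^{s,b-1}_p}\leq 2\|F\|_{Z^{s,b-1}_p(T)}$, and take $\eta_T(t)\int_0^tS(t-t')\widetilde F(t')\,dt'$ as the extension of the Duhamel integral. Inserting the space-time Fourier representation of $\widetilde F$ and performing the time integral yields, at fixed $n$,
\[
\Ft_x\Bigl(\int_0^t S(t-t')\widetilde F(t')\,dt'\Bigr)(n) = e^{itn^3}\int_\R \widehat{\widetilde F}(\tau,n)\,\frac{e^{it(\tau-n^3)}-1}{i(\tau-n^3)}\,d\tau.
\]
I would split the $\tau$-integration into the regions $|\tau-n^3|\leq 1$ and $|\tau-n^3|>1$. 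On the high-modulation piece the inequality $|e^{it(\tau-n^3)}-1|\leq 2$ provides a gain $|\tau-n^3|^{-1}$ that converts $Z^{s,b-1}_p$-control into $Z^{s,b}_p$-control directly, after writing $e^{it(\tau-n^3)}-1$ as a sum of two terms and applying the homogeneous estimate to each. On the low-modulation piece, expanding $e^{it(\tau-n^3)} = \sum_{k\geq 1}(it(\tau-n^3))^k/k!$ cancels the singular denominator and reduces the contribution to a series
\[
\sum_{k\geq 1}\bigl(t^k\eta_T(t)\bigr)\,S(t)G_k,\qquad \widehat{G_k}(n) = \frac{i^{k-1}}{k!}\int_{|\tau-n^3|\leq 1}\widehat{\widetilde F}(\tau,n)(\tau-n^3)^{k-1}\,d\tau.
\]
Cauchy-Schwarz in $\tau$ then gives $\|G_k\|_{\FL^{s,p}}\lesssim(k!)^{-1}\|\widetilde F\|_{X^{s,b-1}_{p,2}}$, and each summand is controlled by the homogeneous estimate applied to the Schwartz cutoff $t^k\eta_T(t)$; the $(k!)^{-1}$ ensures absolute convergence of the series. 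Passing to the infimum over extensions $\widetilde F$ of $F$ then finishes the proof.

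I expect the only mildly delicate step to be the low-modulation portion of the Duhamel bound, where summability of the Taylor series in $X^{s,b'}_{p,q'}$ must be verified; however, this is routine because the $\ell^p_n$-norm commutes with all frequency-by-frequency operations on $\widehat{\widetilde F}$. Since no spatial counting enters at any stage of the argument, there is no restriction on $s$, $b$, or $p$ beyond those in the statement.
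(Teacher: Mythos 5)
Your proof is correct in substance and follows the same route as the paper, which in fact offers no proof of Lemma~\ref{lm:linear} at all but defers to \cite{OH1, GH}; those references carry out exactly this standard Bourgain-type computation (cutoff extension, decoupling of the modulation variable for the free evolution, high/low modulation splitting plus Taylor expansion for the Duhamel term), with the $\l^p_n$-norm passing through frequency by frequency as you observe.

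Two points deserve correction, though neither is fatal for the case $b=\tfrac12$ that the paper actually uses. First, your rescaled cutoff $\eta_T$ costs you uniformity in $T$: after the change of variables $\sigma = T\tau$ the temporal factor $\| \jb{\tau}^{b'} \ft{\eta_T}(\tau)\|_{L^{q'}_\tau}$ scales like $T^{1-\frac{1}{q'}-b'}$, which stays bounded on $0<T\leq 1$ only when $b\leq \tfrac12$ (for both components $(b',q')=(b,2)$ and $(b',q')=(b-\tfrac12,1)$ of $Z^{s,b}_p$). So your closing claim that no restriction on $b$ enters is not justified by the argument as written. The fix is to use a fixed cutoff $\eta \equiv 1$ on $[-1,1]$: since $[-T,T]\subset[-1,1]$, the functions $\eta(t)S(t)u_0$ and $\eta(t)\int_0^t S(t-t')\widetilde F(t')\,dt'$ are still valid extensions, and the constant is then manifestly independent of $T$ for every $b$. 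Second, in the high-modulation region the summand carrying $e^{it(\tau-n^3)}$ is not a free evolution, so the homogeneous estimate does not literally apply to it; instead one reads off its $X^{s,b}_{p,2}\cap X^{s,b-\frac12}_{p,1}$-norm directly from the Fourier representation (the factor $|\tau-n^3|^{-1}$ converts weight $b-1$ into weight $b$) and then invokes stability of $X^{s,b}_{p,q}$ under multiplication by a fixed Schwartz cutoff in time. Relatedly, for the constant-in-time term $S(t)G_0$ at $b=\tfrac12$, Cauchy--Schwarz against $\jb{\tau-n^3}^{-2b}$ diverges logarithmically, and it is precisely the $X^{s,b-\frac32}_{p,1}$ component of $Z^{s,b-1}_p$ that controls $\|G_0\|_{\FL^{s,p}}$ there; this is the entire reason the $\l^1_\tau$-based component appears in the definition of $Z^{s,b}_p$, and your write-up uses it only implicitly.
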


Lastly, we state an auxiliary result, needed for the trilinear estimate in Section~\ref{sec:nonlinear}, adapted from \cite{TAO06}.
\begin{lemma}\label{lm:time}
	Let $-\frac{1}{2} <b'\leq b <\frac{1}{2}$ and $1\leq p < \infty$. The following holds:
	\begin{equation*}
		\left\|  u \right\|_{X^{s,b'}_{p,2}(T)} \lesssim T^{b-b'} \|u\|_{X^{s,b}_{p,2}(T)},
	\end{equation*}
	for any $0<T\leq 1$.
\end{lemma}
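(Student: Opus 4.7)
The plan is to reduce this statement to the classical scalar time-localization bound in $H^b_t(\R)$, which is standard (see e.g.\ \cite{TAO06}), and then use the $\ell^p_n L^2_\tau$ structure of $X^{s,b}_{p,2}$ to assemble the spatial frequencies.

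First, given $u$ defined on $[-T,T]\times\T$, I would select an extension $v \in X^{s,b}_{p,2}(\R\times\T)$ with $\|v\|_{X^{s,b}_{p,2}} \leq 2 \|u\|_{X^{s,b}_{p,2}(T)}$. Fix $\eta\in C^\infty_c(\R)$ with $\eta\equiv 1$ on $[-1,1]$ and $\supp\eta\subset[-2,2]$, and set $\eta_T(t):=\eta(t/T)$. Since $\eta_T v$ agrees with $v$ (hence with $u$) on $[-T,T]$, it is another admissible extension, so by the definition of the restriction norm,
\[
\|u\|_{X^{s,b'}_{p,2}(T)} \;\leq\; \|\eta_T v\|_{X^{s,b'}_{p,2}}.
\]
It therefore suffices to prove $\|\eta_T v\|_{X^{s,b'}_{p,2}} \lesssim T^{b-b'} \|v\|_{X^{s,b}_{p,2}}$ on the full line.

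Next, I would peel off the $\ell^p_n$ sum. Because $\eta_T$ depends only on $t$, multiplication by $\eta_T$ commutes with the spatial Fourier transform, and the weight $\langle n\rangle^s$ passes outside. Writing $v_n(t) := \mathcal F_x v(t,n)$ and $w_n(t) := e^{-itn^3} v_n(t)$, a change of variables $\tau \mapsto \tau + n^3$ in the $L^2_\tau$ norm shows
\[
\|\langle \tau-n^3\rangle^{b}\,\mathcal F_t v_n(\tau)\|_{L^2_\tau}
\;=\;\|w_n\|_{H^b_t(\R)},
\]
and analogously for $b'$. Since multiplication by $\eta_T$ commutes with the modulation $e^{itn^3}$, the claim for each fixed $n$ reduces to the scalar inequality
\[
\|\eta_T w\|_{H^{b'}_t(\R)} \;\lesssim\; T^{b-b'}\,\|w\|_{H^b_t(\R)}, \qquad -\tfrac12 < b'\leq b < \tfrac12,
\]
with implicit constant independent of $n$. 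Raising to the $p$-th power and summing in $n$ (or taking the supremum if $p=\infty$, but $p<\infty$ here) then gives the desired bound.

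The scalar Sobolev estimate above is the heart of the matter, and the argument is standard: one proves the endpoint $b=b'$ case (where $\|\eta_T f\|_{H^b}\lesssim \|f\|_{H^b}$ uniformly in $0<T\leq 1$, by treating the Fourier multiplier $\widehat{\eta_T}$ and splitting the convolution in $\tau$ into $|\tau|\lesssim T^{-1}$ and $|\tau|\gg T^{-1}$ regimes), establishes one non-trivial case (for instance $b'=0$, $0\leq b<\tfrac12$, which follows from Cauchy--Schwarz on $\|\eta_T f\|_{L^2}$ together with the Fourier decay of $\widehat{\eta_T}$), and interpolates. The restriction $|b|,|b'|<\tfrac12$ is crucial because at $|b|=\tfrac12$ the function $\eta_T$ itself fails to lie in $H^{1/2}$ uniformly in $T$. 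I expect the main technical point—and the only step requiring genuine work—to be verifying the scalar $H^b_t$ bound at the endpoints $b'\to -\tfrac12^+$, but since this is precisely the classical lemma quoted from \cite{TAO06} and the $\ell^p_n$ step is trivial once the constant is $n$-uniform, I would simply cite it rather than reproduce the computation.
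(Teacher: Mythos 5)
Your proof is correct and matches the paper's approach: the paper offers no written proof of Lemma~\ref{lm:time}, saying only that it is ``adapted from \cite{TAO06}'', and your argument is exactly that intended adaptation --- conjugate away the modulation $e^{itn^3}$ frequency-by-frequency to reduce to the scalar bound $\|\eta_T w\|_{H^{b'}_t} \lesssim T^{b-b'}\|w\|_{H^{b}_t}$ (Tao's time-localization lemma), note the constant is independent of $n$, and sum in $\ell^p_n$. One parenthetical heuristic is off --- $\|\eta_T\|_{H^{1/2}(\R)}$ is in fact uniformly bounded for $0<T\leq 1$ by scale invariance of $\dot H^{1/2}(\R)$, the true endpoint obstruction being the failure of uniform-in-$T$ boundedness of the multiplication operator $f\mapsto \eta_T f$ at $|b|=\tfrac12$ --- but this remark is not load-bearing, since you cite the scalar lemma rather than rely on that heuristic.
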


\section{Nonlinear estimate}\label{sec:nonlinear}
In this section, we establish a fundamental trilinear estimate, required to show Theorems~\ref{th:lwpreal} and \ref{th:lwp}. 
We start by introducing the following multilinear operators, defined on the Fourier side and omitting time dependence,
\begin{align*}
	\Ft_x \big(\mathcal{NR}(u_1,u_2,u_3) \big) (n) & = \sum_{\nbar \in \Lambda(n)} in_3 \, \ft{u}_1(n_1) \ft{u}_2(n_2) \ft{u}_3 (n_3), \\
	\Ft_x \big(\mathcal{R} (u_1,u_2,u_3) \big) (n) & = i n \, \ft{u}_1(n) \conj{\ft{u}}_2 (n) \ft{u}_3(n),
\end{align*}
where $\nbar = (n_1,n_2,n_3)$ and 
$$\Lambda(n) = \{ (n_1,n_2,n_3)\in\Z^3:\ n = n_1 + n_2 + n_3, \ (n_1 + n_2) (n_1+n_3) (n_2 + n_3) \neq 0 \}.$$
Recall from \eqref{Cnonlinear} that the nonlinearity of the real-valued mKdV1 equation \eqref{renorm} and of mKdV2 \eqref{renorm2} can be written as $\mathcal{N}^*(u, u, u) = \mathcal{NR}(u, \conj{u}, u) - \mathcal{R}(u,u,u)$.
The following trilinear estimates hold.
\begin{proposition}
	Let $(s,p)$ satisfy one of the following conditions: \textup{(i)} $\frac12 \leq s < \frac34$, ${1\leq p <\frac{4}{3-4s}}$; \textup{(ii)} $s\geq \frac34$, $1\leq p <\infty$. For $u_j: \R\times \T\to \C$, $j=1,2,3$, the following estimate holds:
	\begin{align}
		\|
		\mathcal{NR}(u_1,u_2,u_3)\|_{Z^{s,-\frac12}_p(T)} &\lesssim T^\delta \prod_{j=1}^3 \|u_j\|_{X^{s,\frac12}_{p,2}(T)},\label{nonlinear_nonres}\\
		\|\mathcal{R}(u_1,u_2,u_3) \|_{Z^{s,-\frac12}_p(T)} &\lesssim T^\delta \prod_{j=1}^3 \|u_j\|_{X^{s,\frac12}_{p,2}(T)}, \label{nonlinear_res}
	\end{align} 
	for some $0<\delta \ll1$ and any $0<T\leq 1$.
	\label{prop:nonlinear}
\end{proposition}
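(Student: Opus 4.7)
The plan is to dualize each estimate into a weighted multilinear Fourier bound, exploit the classical mKdV resonance identity to trade one modulation weight for a gain in the frequencies, and then run a case analysis adapted to the $\ell^p$-based Fourier-Lebesgue structure.

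First, I would use Lemma~\ref{lm:time} to replace the $X^{s,-1/2}_{p,2}(T)$-component of $Z^{s,-1/2}_{p}(T)$ by $X^{s,-1/2+\delta}_{p,2}(T)$ for a small $\delta>0$, picking up the factor $T^\delta$; the $X^{s,-1}_{p,1}(T)$-component is controlled by $X^{s,0}_{p,2}(T)$ via Cauchy-Schwarz in $\tau$ (using $\langle\tau-n^3\rangle^{-1}\in L^2_\tau$), after which another application of Lemma~\ref{lm:time} produces the $T^\delta$. Extending each $u_j$ to all of $\R\times\T$ reduces the problem to a pair of global multilinear estimates. By duality of $\ell^p_nL^2_\tau$ it then suffices to bound
\begin{align*}
\sum_n\int \widehat v(\tau,n)\,\overline{\widehat{\mathcal{NR}(u_1,u_2,u_3)}(\tau,n)}\,d\tau
\end{align*}
by $\|v\|_{X^{-s,1/2-\delta}_{p',2}}\prod_{j=1}^3\|u_j\|_{X^{s,1/2}_{p,2}}$, and analogously for $\mathcal{R}$. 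Setting $f_j(\tau_j,n_j)=\langle n_j\rangle^s\langle\tau_j-n_j^3\rangle^{1/2}|\widehat u_j(\tau_j,n_j)|$ and $g(\tau,n)=\langle n\rangle^{-s}\langle\tau-n^3\rangle^{1/2-\delta}|\widehat v(\tau,n)|$ converts the task into an unweighted multiplier bound.

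For \eqref{nonlinear_nonres}, the key input is the mKdV dispersion identity
\begin{align*}
(\tau-n^3)-\sum_{j=1}^{3}(\tau_j-n_j^3)=-3(n_1+n_2)(n_1+n_3)(n_2+n_3)=:-\Phi,
\end{align*}
valid under $\tau=\sum\tau_j$ and $n=\sum n_j$, which forces the maximum of $\langle\tau-n^3\rangle$ and the three $\langle\tau_j-n_j^3\rangle$ to dominate $|\Phi|$; on $\Lambda(n)$ we have $|\Phi|\geq 3$. Splitting into four cases according to which modulation achieves the maximum, I would exchange that weight for $|\Phi|^{1/2-}$, and use Cauchy-Schwarz/Plancherel in the two remaining $\tau$-convolutions to absorb two further modulation weights. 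This reduces matters to a purely frequency estimate of the schematic form
\begin{align*}
\sum_{\substack{n=n_1+n_2+n_3\\ \nbar\in\Lambda(n)}}\frac{|n_3|\langle n\rangle^s}{\langle n_1\rangle^s\langle n_2\rangle^s\langle n_3\rangle^s|\Phi|^{1/2-}}\,G(n)\prod_{j=1}^{3}F_j(n_j)\lesssim\|G\|_{\ell^{p'}}\prod_{j=1}^{3}\|F_j\|_{\ell^p},
\end{align*}
which is a Young-type convolution inequality in $\ell^p$, handled by dyadic decomposition, H\"older, and a divisor-type counting of triples $(n_1,n_2,n_3)$ producing a given value of $\Phi$. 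For the resonant estimate \eqref{nonlinear_res}, the operator $\mathcal{R}$ enforces $n_1=n_2=n_3=n$, so the spatial convolution collapses and on the Fourier side one is left with a double convolution purely in $\tau$ at each fixed $n$; Young's inequality in $\tau$ combined with $|n|\lesssim\langle n\rangle^{3s}$ (valid for $s\geq 1/2$) then yields the desired bound after summing in $\ell^p_n$.

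The main obstacle is the high-high-high interaction in the non-resonant case, where $|n_1|\sim|n_2|\sim|n_3|\sim|n|$ with some $|n_i+n_j|$ small, so that the resonance gain $|\Phi|^{1/2-}$ is weak while the derivative $|n_3|$ is a genuine loss. Closing this interaction is what forces $s\geq\frac12$, and balancing the $\ell^p$-Young inequality against the available resonance gain is what produces the ceiling $p<\frac{4}{3-4s}$ in the regime $\frac12\leq s<\frac34$. As already noted in the introduction, this upper bound on $p$ is not expected to be sharp.
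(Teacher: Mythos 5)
Your treatment of the $X^{s,-\frac12}_{p,2}$-component of the $Z$-norm and of the resonant operator $\mathcal{R}$ follows essentially the paper's route: trade the maximal modulation for $|\Phi(\nbar)|^{\frac12-}$ via the resonance identity, integrate out the remaining modulations by convolution estimates in $\tau$, and reduce to a weighted frequency sum (for $\mathcal{R}$, use $\jb{n}^s|n|\lesssim\jb{n}^{3s}$, which is where $s\geq\frac12$ enters, plus H\"older in $n$). The genuine gap is your treatment of the other component of $Z^{s,-\frac12}_p$, namely $X^{s,-1}_{p,1}$. You control it by Cauchy--Schwarz in $\tau$ ``using $\jb{\tau-n^3}^{-1}\in L^2_\tau$'', i.e.\ by $\|\mathcal{NR}\|_{X^{s,0}_{p,2}}$. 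This discards the \emph{entire} output modulation weight, and the trilinear estimate you are then left to prove,
\begin{align*}
\|\mathcal{NR}(u_1,u_2,u_3)\|_{X^{s,0}_{p,2}}\lesssim \prod_{j=1}^3\|u_j\|_{X^{s,\frac12}_{p,2}},
\end{align*}
is false in part of the range of the proposition: once the output weight is gone, in the region where $\s_0$ dominates (all $|\s_j|=O(1)$, $j=1,2,3$) there is nothing left from which to extract the gain $|\Phi(\nbar)|^{-\frac12}$, and the derivative $|n_3|$ must be absorbed by $\prod_j\jb{n_j}^{-s}$ alone. Concretely, take $\ft{u}_j(\tau_j,n_j)= N^{-s-\frac1p}\,\ind_{[N,2N]}(n_j)\,\ind_{\{|\tau_j-n_j^3|\leq 1\}}$, so that $\|u_j\|_{X^{s,\frac12}_{p,2}}\sim 1$. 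All summands in $\ft{\mathcal{NR}}$ then have the same phase, so cross terms are nonnegative and, for each of the $\sim N$ output frequencies $n\sim\frac92N$,
\begin{align*}
\big\|\ft{\mathcal{NR}}(\cdot,n)\big\|_{L^2_\tau}^2\gtrsim \#\big\{\nbar\in\Lambda(n):\, n_j\in[N,2N]\big\}\cdot\big(N\cdot N^{-3s-\frac3p}\big)^2\sim N^{4-6s-\frac6p},
\end{align*}
whence $\|\mathcal{NR}\|_{X^{s,0}_{p,2}}\gtrsim N^{\frac1p+s}\cdot N^{2-3s-\frac3p}=N^{2-2s-\frac2p}\to\infty$ whenever $s+\frac1p<1$ --- in particular for $s=\frac12$, $2<p<4$, which lies inside range (i). (The subsequent application of Lemma~\ref{lm:time} you propose would require an output exponent $\delta>0$, which only makes matters worse.)

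The repair is to split the weight rather than discard it: writing $\jb{\s_0}^{-1}=\jb{\s_0}^{-\frac12-\eps}\,\jb{\s_0}^{-\frac12+\eps}$ and placing only $\jb{\s_0}^{-\frac12-\eps}$ in $L^2_\tau$ gives $\|\mathcal{NR}\|_{X^{s,-1}_{p,1}}\lesssim\|\mathcal{NR}\|_{X^{s,-\frac12+\eps}_{p,2}}$, which your Part-1 scheme does cover (the gain degrades from $|\Phi|^{-\frac12}$ to $|\Phi|^{-\frac12+\eps}$, harmless given the strict inequalities in the frequency sums). This is exactly what the paper does for $\mathcal{R}$, and, in effect, in its Part 2 for $\mathcal{NR}$: when $\s_{\max}=|\s_0|$ it uses $|\s_0|\sim|\Phi(\nbar)|$, spending half of $\jb{\s_0}^{-1}$ on the gain $|\Phi|^{-\frac12}$ and the other half on integrability in $\tau$ (H\"older with an exponent $q<2$). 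Two smaller points: Lemma~\ref{lm:time} requires $-\frac12<b'\leq b<\frac12$, so it cannot be invoked at $b'=-\frac12$ directly; pass first to $X^{s,-\frac12+\delta/2}_{p,2}(T)$ by the trivial embedding of restriction spaces and apply the lemma between $-\frac12+\delta/2$ and $-\frac12+\delta$ (the paper sidesteps this by putting the $\nu$-loss on the \emph{input} exponents instead). Also, the constraint $s\geq\frac12$ is forced by the resonant operator $\mathcal{R}$, not by the high--high--high nonresonant interaction, which only requires $s\geq\frac14$ together with $s>\frac34-\frac1p$; the latter condition is what produces the ceiling $p<\frac{4}{3-4s}$.
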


Before proceeding to the proof of Proposition~\ref{prop:nonlinear}, recall the following well-known tools (see \cite[Lemma 4.2]{GTV} and \cite[Lemma 4.1]{MWX}, respectively).
\begin{lemma}\label{lemma:convolution}
	Let $0\leq \al\leq \be$ such that $\al + \be>1$ and $\eps>0$. Then, we have
	\begin{align*}
		\int_\R \frac{1}{\jb{x-a}^{\al} \jb{x-b}^{\be} } dx \lesssim \frac{1}{ \jb{a-b}^{\gamma} },
	\end{align*}
	where 
	\begin{align*}
		\quad \gamma =
		\begin{cases}
			\al + \be -1 , & \be<1, \\
			\al - \eps, & \be =1, \\
			\al, & \be>1.
		\end{cases}
	\end{align*}
\end{lemma}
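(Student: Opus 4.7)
The plan is to establish the bound by reducing to a canonical form and carrying out a dyadic-style region decomposition of $\mathbb{R}$. By translation invariance, set $d=|a-b|\geq 0$; after changing variables we may assume $a=0$ and $b=d$, so the goal becomes
\[
I(d) := \int_\R \frac{dx}{\jb{x}^\al \jb{x-d}^\be} \lesssim \jb{d}^{-\g}.
\]
When $d\lesssim 1$ we have $\jb{d}\sim 1$ and the assumption $\al+\be>1$ gives $I(d)\lesssim \int_\R \jb{x}^{-\al-\be}\,dx \lesssim 1$, so it suffices to treat $d\gtrsim 1$.

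Next I would split $\R$ into three pieces: the region $A=\{|x|\le d/2\}$ near $x=0$, the region $B=\{|x-d|\le d/2\}$ near $x=d$, and the complement $C=\R\setminus(A\cup B)$ consisting of the two tails and (when $d$ is small) the gap between $A$ and $B$. On $A$ we have $\jb{x-d}\sim \jb{d}$, so $\int_A \lesssim \jb{d}^{-\be}\int_{|x|\le d/2}\jb{x}^{-\al}\,dx$; on $B$ we symmetrically get $\int_B \lesssim \jb{d}^{-\al}\int_{|y|\le d/2}\jb{y}^{-\be}\,dy$; and on $C$ both $\jb{x}$ and $\jb{x-d}$ are comparable to $\jb{x}\gtrsim \jb{d}$, so
\[
\int_C \lesssim \int_{|x|\gtrsim d}\jb{x}^{-\al-\be}\,dx \lesssim \jb{d}^{1-\al-\be},
\]
where the last step uses $\al+\be>1$.

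The remaining one–dimensional integrals satisfy $\int_{|y|\le d/2}\jb{y}^{-\kappa}\,dy \sim \jb{d}^{1-\kappa}$ for $\kappa<1$, $\sim \log\jb{d}$ for $\kappa=1$, and $\sim 1$ for $\kappa>1$. Inserting these and using $\al\le\be$ to identify the slowest-decaying contribution yields the three cases of $\g$. When $\be<1$ all three regions contribute $\jb{d}^{1-\al-\be}$, giving $\g=\al+\be-1$. When $\be>1$ the integral over $B$ no longer depends on $d$ and gives $\jb{d}^{-\al}$, which dominates both the $A$- and $C$-contributions (since $\al\le\be$), yielding $\g=\al$. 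When $\be=1$ the $B$-integral produces a logarithm $\jb{d}^{-\al}\log\jb{d}$, absorbed into an $\eps$-loss via $\log\jb{d}\lesssim_\eps \jb{d}^{\eps}$, giving $\g=\al-\eps$.

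The bookkeeping is essentially routine; the only genuine subtlety is the endpoint $\be=1$, where the logarithmic divergence of $\int_{|y|\le d/2}\jb{y}^{-1}\,dy$ forces the $\eps$-loss and explains why $\g$ cannot be taken equal to $\al$ at that endpoint. One small thing to check along the way is that, in the $\al=1$ sub-case of the regime $\be\ge 1$, the logarithm arising from Region $A$ is dominated by the power $\jb{d}^{-\al}$ coming from Region $B$, so it does not worsen the exponent beyond what is claimed.
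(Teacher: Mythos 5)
Your argument is correct in all three cases, including the endpoint checks: when $\be=1$ the hypothesis $\al+\be>1$ forces $\al>0$ and the logarithm from Region $B$ genuinely requires the $\eps$-loss, and in the sub-case $\al=1$, $\be\geq 1$ the contribution $\jb{d}^{-\be}\log\jb{d}$ from Region $A$ is indeed dominated by $\jb{d}^{-\al}$, exactly as you flag. Note, however, that the paper does not prove this lemma at all: it is quoted as a known tool with a pointer to Lemma 4.2 of Ginibre--Tsutsumi--Velo \cite{GTV}, and your region decomposition (near $a$, near $b$, and the tails, with the elementary asymptotics of $\int_{|y|\leq d/2}\jb{y}^{-\kk}\,dy$ in the regimes $\kk<1$, $\kk=1$, $\kk>1$) is precisely the standard argument behind that citation, so what your write-up buys is a self-contained proof rather than a new route. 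Two cosmetic points: with radii $d/2$ the sets $A$ and $B$ always share the endpoint $x=d/2$, so there is never a gap between them (your parenthetical about small $d$ is moot, since you treat $d\les 1$ separately anyway); and the reduction for $d\les 1$ implicitly uses the Peetre-type inequality $\jb{x-d}\ges \jb{x}/\jb{d}$ to replace $\jb{x-d}^{-\be}$ by $\jb{x}^{-\be}$ up to a constant, which is worth one line since $\jb{x-d}$ and $\jb{x}$ are not pointwise comparable without the $\jb{d}$ factor. Neither point affects the validity of the proof.
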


\begin{lemma}\label{lemma:discrete_convolution}
	Let $0\leq \al,\be<1$ such that $\al + \be > 1$.
	Then, we have 
	\begin{align*}
		\sum_{\substack{n_1,n_2\in\Z \\ n_1+n_2=n}} \frac{1}{ \jb{n_1}^{\al} \jb{n_2}^{\be} } \lesssim \frac{1}{ \jb{n}^{\al+\be -1} },
	\end{align*}
	uniformly over $n\in\Z$.
\end{lemma}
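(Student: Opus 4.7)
The plan is to prove Lemma~\ref{lemma:discrete_convolution} via a region decomposition of the summation range that uses the two hypotheses in complementary ways: $\al,\be<1$ provides summability on intervals of length $\sim|n|$ with a polynomial gain $\jb{n}^{1-\al}$ (or $\jb{n}^{1-\be}$), while $\al+\be>1$ ensures convergence of the tail at infinity. After handling $n=0$ directly (which uses only $\al+\be>1$), I would set $n_2=n-n_1$ for $n\neq 0$ and split $\Z$ into three regions: $(\mathrm{i})$ $|n_1|\leq \frac14|n|$, on which $\jb{n_2}\sim\jb{n}$; $(\mathrm{ii})$ $|n_2|\leq \frac14|n|$, on which $\jb{n_1}\sim\jb{n}$; and $(\mathrm{iii})$ $|n_1|,|n_2|\gtrsim|n|$.

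In region $(\mathrm{i})$ the factor $\jb{n_2}^{-\be}$ comes out as $\sim\jb{n}^{-\be}$, reducing the sum to $\sum_{|n_1|\lesssim|n|}\jb{n_1}^{-\al}\lesssim\jb{n}^{1-\al}$ by $\al<1$, and the product gives the target $\jb{n}^{1-\al-\be}$. Region $(\mathrm{ii})$ is symmetric, invoking $\be<1$. For region $(\mathrm{iii})$ I would split once more: when $|n_1|\sim|n|$, both weights are $\sim\jb{n}^{-\al}$ and $\sim\jb{n}^{-\be}$ respectively and there are only $\lesssim\jb{n}$ terms, again yielding $\jb{n}^{1-\al-\be}$; when $|n_1|\gg|n|$ one has $\jb{n_2}\sim\jb{n_1}$, so the sum reduces to $\sum_{|n_1|\gg|n|}\jb{n_1}^{-(\al+\be)}\lesssim\jb{n}^{1-\al-\be}$ thanks to $\al+\be>1$.

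I do not anticipate any serious obstacle: the hypotheses on $\al$ and $\be$ match precisely the two summability mechanisms present in the decomposition --- a truncated sum over an interval of length $\sim|n|$ with exponent strictly less than $1$, and a convergent tail over the regime $|n_1|\gg|n|$. The only care needed is in absorbing small-$|n|$ corrections, which is handled by working throughout with the Japanese brackets $\jb{\cdot}$ rather than $|\cdot|$ in the comparisons above.
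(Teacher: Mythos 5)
Your proof is correct. Note first that the paper does not actually prove Lemma~\ref{lemma:discrete_convolution}: it is stated as a ``well-known tool'' and attributed to \cite[Lemma 4.1]{MWX}, so there is no in-paper argument to compare against; your write-up supplies the standard self-contained proof that the reference contains. The region decomposition is complete and each hypothesis is used exactly where it must be: on $\{|n_1|\leq \frac14|n|\}$ one has $\jb{n_2}\sim\jb{n}$ and the truncated sum $\sum_{|n_1|\lesssim |n|}\jb{n_1}^{-\al}\lesssim \jb{n}^{1-\al}$ requires $\al<1$ (and tolerates $\al=0$); the symmetric region requires $\be<1$; in the complementary region the subcase $|n_1|\sim|n|$ is a counting bound ($\lesssim\jb{n}$ terms, each of size $\lesssim\jb{n}^{-\al-\be}$), and the subcase $|n_1|\gg|n|$ gives $\jb{n_2}\sim\jb{n_1}$ so the tail $\sum_{|n_1|\gtrsim|n|}\jb{n_1}^{-(\al+\be)}\lesssim\jb{n}^{1-\al-\be}$ converges precisely because $\al+\be>1$. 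The $n=0$ case and the disjointness/coverage of the regions are also handled correctly (the two ``small'' regions cannot overlap when $n\neq 0$, and overlap would be harmless for an upper bound anyway). One could alternatively obtain the lemma by an integral comparison mirroring the continuous version in Lemma~\ref{lemma:convolution}, but your discrete decomposition is the cleaner and more standard route.
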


\begin{proof}[Proof of Proposition~\ref{prop:nonlinear}]
	The estimates follow once we show
	\begin{align}
		\| \mathcal{NR}(\tilde{u}_1, \tilde{u}_2, \tilde{u}_3) \|_{Z^{s, -\frac12}_p} & \lesssim \max\limits_{k=1,2,3} \Bigg( \|\tilde{u}_k\|_{X^{s,\frac12}_{p,2}} \prod_{\substack{j=1,\\j\neq k}}^3 \|\tilde{u}_j\|_{X^{s,\frac12-\nu}_{p,2}}\Bigg),\nonumber\\
		\| \mathcal{R}(\tilde{u}_1, \tilde{u}_2, \tilde{u}_3) \|_{Z^{s, -\frac12}_p}  & \lesssim \prod_{j=1}^3 \|\tilde{u}_j \|_{X^{s,\frac12-\nu}_{p,2}}, \label{aux_res}
	\end{align}
	for any $\tilde{u}_j$ an extension of $u_j$ in $[-T,T]$, $j=1,2,3$, and some $\nu>0$. Then, taking an infimum over all extensions and using Lemma~\ref{lm:time} gives estimates \eqref{nonlinear_nonres} and \eqref{nonlinear_res}. For simplicity, denote the extensions $\tilde{u}_j$ by $u_j$, $j=1,2,3$, in the remaining of the proof.
	
	Let $\sigma_0 = \tau - n^3$, $\sigma_j = \tau_j - n_j^3$, $j=1,2,3$, $\mu = (\tau, n)$ and $\mu_j = (\tau_j, n_j)$, $j=1,2,3$. We start by estimating the non-resonant contribution $\mathcal{NR}$.
	
	\smallskip
	\noi\underline{\textbf{Part 1}}\\
	We first estimate the $X^{s,-\frac12}_{p,2}$-norm of $\mathcal{NR}$,
	\begin{align}
		\| \mathcal{NR}(u_1, u_2,u_3)\|_{X^{s, -\frac12}_{p,2}} & \lesssim \bigg\| \sum_{\nbar \in \Lambda(n)} \intt_{\tau=\tau_1 + \tau_2 + \tau_3}\frac{\jb{n}^s |n_3|}{\jb{\tau-n^3}^\frac12} \prod_{j=1}^3 | \ft{u}_j(\tau_j, n_j)| \bigg\|_{\l^p_n L^2_\tau}. \label{nonres-L2}
	\end{align}
	Note that $\s_0 - \s_1 - \s_2 - \s_3  = -3(n_1+n_2)(n_1+n_3)(n_2+n_3) =: \Phi(\nbar)$ which implies that 
	\begin{align}
		|\Phi(\nbar)| \lesssim \max\limits_{j=0,1,2,3} |\s_j| =: \s_{\max}. \label{resonance}
	\end{align}
	Let $|n_{\min}| \leq |n_{\med}| \leq |n_{\max}|$ denote the increasing rearrangement of the frequencies $n_1, n_2, n_3$. We distinguish the following two cases for the resonance relation $\Phi(\nbar)$:
	\begin{gather}
		|n_1|\sim|n_2|\sim|n_3| , \quad |\Phi(\nbar)| \sim |n_{\max}| \ld_1 \ld_2  \quad \text{and} \label{mod1},\\
		|\Phi(\nbar)| \sim |n_{\max}|^2 \ld, \label{mod2} 
	\end{gather}
	where $\ld,\ld_1,\ld_2 \in \{|n_1+n_2|, |n_1+n_3|, |n_2+n_3|\}$ distinct. From \eqref{resonance}, we can use the largest modulation $\s_{\max} = \max\limits_{j=0,\ldots,3} |\s_j|$ to gain powers of the maximum frequency. Thus, we will consider different cases depending on the value of $\s_{\max}$ and on which of the conditions \eqref{mod1} or \eqref{mod2} holds.
	
	\smallskip
	
	\noi\underline{\textbf{Case 1.1:} $\s_{\max} = |\s_0|$}
	
	\noi Let $f_j(\tau,n) = \jb{n}^s \jb{\tau-n^3}^{\frac12-\nu} |\ft{u}_j(\tau, n)|$ and note that $\|f_j\|_{\l^p_n L^2_\tau} = \|u_j\|_{X^{s,\frac12-\nu}_{p,2}}$, $j=1,2,3$. Then, we have 
	\begin{align*}
		\eqref{nonres-L2} & \lesssim \Bigg\| \sum_{\nbar\in\Lambda(n)} \intt_{\tau=\tau_1 + \tau_2 + \tau_3} \frac{\jb{n}^s |n_3|}{|\Phi(\nbar)|^\frac12 \prod_{j=1}^3 \jb{n_j}^s \jb{\s_j}^{\frac12-\nu}} \prod_{j=1}^3 f_j(\tau_j, n_j) \Bigg\|_{\l^p_n L^2_\tau} \\
		& \lesssim \sup_{\tau, \nbar} J_1(\tau,\nbar) \bigg\| \sum_{\nbar\in\Lambda(n)}  \frac{\jb{n}^s |n_3|}{|\Phi(\nbar)|^\frac12 \prod_{j=1}^3 \jb{n_j}^s} \bigg( \intt_{\tau=\tau_1 + \tau_2 + \tau_3}\prod_{j=1}^3 |f_j(\tau_j, n_j)|^2 \bigg)^\frac12 \Bigg\|_{\l^p_n L^2_\tau},
	\end{align*}
	using H\"older's inequality, where
	\begin{align}
		J_{1} (\tau, \nbar) := \Bigg( \intt_{\tau=\tau_1 + \tau_2 + \tau_3} \frac{1}{(\jb{\s_1} \jb{\s_2}\jb{\s_3})^{1 - 2 \nu} }  \Bigg)^\frac12. \label{J1}
	\end{align}
	Using Lemma~\ref{lemma:convolution} twice, we obtain that $J_1(\tau, \nbar) \lesssim 1$ for $0<\nu<\tfrac16$.
	Using Minkowski's and H\"older's inequalities, it follows that 
	\begin{align}
		\eqref{nonres-L2} & \lesssim \bigg\| \sum_{\nbar\in\Lambda(n)} \frac{\jb{n}^s |n_3|}{|\Phi(\nbar)|^\frac12 \prod_{j=1}^3 \jb{n_j}^s} \prod_{j=1}^3 \|f_j(n_j)\|_{L^2_\tau} \bigg\|_{\l^p_n} \nonumber\\
		& \lesssim \sup_{n} \big( J'_1(n) \big)^{\frac{1}{p'}} \prod_{j=1}^3 \|f_j\|_{\l^p_n L^2_\tau}, \label{nonlinear_main}
	\end{align}
	where
	\begin{equation}J'_1(n) = \sum_{\nbar\in\Lambda(n)} \bigg(\frac{\jb{n}^s |n_3|}{|\Phi(\nbar)|^\frac12 \prod_{j=1}^3 \jb{n_j}^s} \bigg)^{p'} . \label{J1aux}\end{equation}
	Since $\|f_j\|_{\l^p_n L^2_\tau} = \|u_j\|_{X^{s,\frac12-\nu}_{p,2}}$, it only remains to estimate $J'_1$. To that end, we must consider the two cases \eqref{mod1} and \eqref{mod2}.
	If \eqref{mod1} holds, then 
	$$\frac{\jb{n}^s |n_3| }{|\Phi(\nbar)|^\frac12 \prod_{j=1}^3 \jb{n_j}^s} \lesssim \frac{1}{ \jb{n_3}^{2s-\frac12} \jb{\ld_1}^\frac12 \jb{\ld_2}^\frac12},$$
	for distinct $\ld_1, \ld_2 \in \{|n_1+n_2| , |n_1+n_3|, |n_2+n_3|\}$. We can write $\ld_j = |n-n'_j|$, $j=1,2$, where $n'_1, n'_2$ are distinct frequencies in $n_1,n_2,n_3$. Since $\ld_1, \ld_2 \lesssim |n_3|$, we have
	\begin{align*}
		J'_1(n) &\lesssim \sum_{\nbar\in\Lambda(n)} \frac{1}{\jb{n_3}^{(2s-\frac12)p'} \jb{n-n'_1}^{\frac{p'}{2}} \jb{n-n'_2}^{\frac{p'}{2}}} \\
		& \lesssim \sum_{n'_1, n'_2} \frac{1}{\jb{n-n'_1}^{(s+\frac14)p'} \jb{n-n'_2}^{(s+\frac14)p'}} \\
		& \lesssim 1,
	\end{align*}
	for $s\geq \frac14$, $1\leq p <2$ or $s>\frac34 - \frac1p$, $2\leq p <\infty$.
	If \eqref{mod2} holds, then
	\begin{align*}
		\frac{\jb{n}^s |n_3|}{|\Phi(\nbar)|^\frac12 \prod_{j=1}^3 \jb{n_j}^s} \lesssim \frac{1}{\jb{n_{\min}}^s \jb{n_{\med}}^s \ld^\frac12},
	\end{align*}
	where $\ld \in \{|n_{\min} + n_{\med}|, |n-n_{\min}| \}$.
	If $\ld = |n_{\min} + n_{\med}|$, since $|n_{\min} | , |n_{\min} + n_{\med}| \lesssim |n_{\med}|$, we have
	\begin{align*}
		J'_1(n) & \lesssim \sum_{\nbar \in \Lambda(n)} \frac{1}{\jb{n_{\min}}^{sp'} \jb{n_{\med}}^{sp'} \jb{n_{\min} + n_{\med}}^{\frac{p'}{2}}} \\
		&\lesssim \sum_{n_{\min}, n_{\med}} \frac{1}{\jb{n_{\min}}^{(s+\frac14)p'} \jb{n_{\min} + n_{\med}}^{(s+\frac14)p'} } \lesssim 1
	\end{align*}
	given that $s\geq \frac14$, $1\leq p<2$ or $s>\frac34-\frac1p$, $2\leq p<\infty$. If $\ld = |n - n_{\min}|$, since $|n - n_{\min}|, |n_{\min}| \lesssim |n_{\med}|$, the same estimate follows from using Lemma~\ref{lemma:discrete_convolution}.

	\smallskip
	
	\noi\underline{\textbf{Case 1.2:} $\s_{\max} = |\s_j|$, $j\in\{1,2,3\}$}
	
	\noi Assume that $\s_{\max} = |\s_1|$ as a similar argument holds in the remaining cases. Let $g_1(\tau,n) = \jb{n}^s \jb{\tau-n^3}^\frac12 |\ft{u}_1(\tau,n)|$, $g_j(\tau,n) = \jb{n}^s \jb{\tau-n^3}^{\frac12 - \nu} |\ft{u}_j(\tau,n)|$, $j=2,3$, and note that $\|g_1\|_{\l^p_n L^2_\tau} = \|u_1\|_{X^{s,\frac12}_{p,2}}$ and $\|g_j\|_{\l^p_n L^2_\tau} = \| u_j \|_{X^{s,\frac12 - \nu}_{p,2}}$, $j=2,3$. Using duality, for $g_0 \in \l^{p'}_n L^2_\tau$, and H\"older's inequality, we have
	\begin{align*}
		\eqref{nonres-L2} & \lesssim \sum_n \sum_{\nbar\in\Lambda(n)} \intt_\tau \intt_{\tau = \tau_1 + \tau_2 + \tau_3} \frac{\jb{n}^s |n_3|}{|\Phi(\nbar)|^\frac12 \jb{\s_0}^{\frac12-\nu} \jb{n_1}^s \prod_{j=2}^3 \jb{n_j}^s \jb{\s_j}^{\frac12-\nu}} \\
		& \qquad \qquad \qquad \qquad \qquad \qquad \qquad \qquad \qquad  \times g_0(\tau, n) \prod_{j=1}^3 g_j(\tau_j, n_j) \\
		& \lesssim \bigg(\sup_{\tau_1,n, \nbar} J_2(\tau_1, n, \nbar) \bigg) \sum_n \sum_{\nbar\in\Lambda(n)} \frac{\jb{n}^s |n_3|}{|\Phi(\nbar)|^\frac12 \prod_{j=1}^3 \jb{n_j}^s} \|g_0(n)\|_{L^2_\tau} \prod_{j=1}^3 \|g_j(n_j)\|_{L^2_\tau},
	\end{align*} 
	where 
	\begin{align*}
		J_2(\tau_1, n, \nbar) = \bigg( \intt_{\tau_1 = \tau - \tau_2 - \tau_3} \frac{1}{ (\jb{\s_0}\jb{\s_2}\jb{\s_3})^{1 - 2\nu}} \bigg)^\frac12 \lesssim 1,
	\end{align*}
	by two applications of Lemma~\ref{lemma:convolution} with $0<\nu<\tfrac16$.
	Using H\"older's inequality, we obtain
	\begin{align*}
		\eqref{nonres-L2} & \lesssim \bigg( \sum_n \|g_0(n)\|_{L^2_\tau}^{p'} \, J'_1(n) \bigg)^{\frac{1}{p'}} \prod_{j=1}^3 \|g_j\|_{\l^p_n L^2_\tau} \lesssim \|g_0\|_{\l^{p'}_n L^2_\tau} \prod_{j=1}^3 \|g_j\|_{\l^p_n L^2_\tau}, 
	\end{align*}
	with $J'_1(n)$ defined in \eqref{J1aux}, which is uniformly bounded by following the same arguments in the previous case. This concludes the estimate for $\|\mathcal{NR}(u_1,u_2,u_3)\|_{X^{s,-\frac12}_{p,1}}$.
	
	\smallskip 
	
	\noi\textbf{\underline{Part 2}}
	
	\noi Next, we consider the $X^{s,-1}_{p,1}$-norm of $\mathcal{NR}$,
	\begin{align}
		\| \mathcal{NR}(u_1,u_2,u_3) \|_{X^{s,-1}_{p,1}} & \lesssim \bigg\| \sum_{\nbar\in\Lambda(n)} \intt_{\tau = \tau_1 + \tau_2 + \tau_3}\frac{\jb{n}^s |n_3|}{\jb{\s_0}} \prod_{j=1}^3 |\ft{u}_j(\tau_j,n_j)| \bigg\|_{\l^p_n L^1_\tau}. \label{nonres-L1}
	\end{align}
	As in Part 1, we will consider different cases depending on the value of $\s_{\max}$. If $\s_{\max} = |\s_j|$, $j\in\{1,2,3\}$, then using Cauchy-Schwarz inequality in $\tau$ gives
	\begin{align*}
		\eqref{nonres-L1} & \lesssim \bigg\| \sum_{\nbar\in\Lambda(n)} \intt_{\tau = \tau_1 + \tau_2 + \tau_3}\frac{\jb{n}^s |n_3|}{\jb{\s_0}^{\frac12-\nu}} \prod_{j=1}^3 |\ft{u}_j(\tau_j,n_j)| \bigg\|_{\l^p_n L^2_\tau}
	\end{align*}
	and the estimate follows from Case 1.2.
	Hence, we can assume that $|\s_0| \gg |\s_j|$, $j=1,2,3$, which implies that $|\s_0| \sim | \s_0 - \s_1 - \s_2 - \s_3|$. Let $h_j(\tau,n) = \jb{n}^s \jb{\tau-n^3}^{\frac12-2\nu} |\ft{u}_j(\tau,n)|$, $j=1,2,3$. Then, using H\"older's inequality with $1 = \frac{1}{q} + \frac{1}{q'}$ and $q<2$ and Minkowski's inequality, we have
	\begin{align*}
		\eqref{nonres-L1} & \lesssim \bigg\| \sum_{\nbar\in\Lambda(n)} \intt_{\tau = \tau_1 + \tau_2 + \tau_3 }\frac{\jb{n}^s |n_3|}{|\Phi(\nbar)|^\frac12 \jb{\s_0}^\frac12 \prod_{j=1}^3 \jb{n_j}^s \jb{\s_j}^{\frac12-2\nu}} \prod_{j=1}^3 h_j(\tau_j,n_j) \bigg\|_{\l^p_n L^1_\tau} \\
		& \lesssim \bigg\| \sum_{\nbar\in\Lambda(n)} \intt_{\tau = \tau_1 + \tau_2 + \tau_3 }\frac{\jb{n}^s |n_3|}{|\Phi(\nbar)|^\frac12 \prod_{j=1}^3 \jb{n_j}^s \jb{\s_j}^{\frac12-\nu}} \prod_{j=1}^3 h_j(\tau_j,n_j) \bigg\|_{\l^p_n L^q_\tau} \\
		& \lesssim \big(\sup_{\tau,\nbar} J_3(n) \big) \bigg\| \sum_{\nbar\in\Lambda(n)} \frac{\jb{n}^s |n_3|}{|\Phi(\nbar)|^\frac12 \prod_{j=1}^3 \jb{n_j}^s } \prod_{j=1}^3 \|h_j(n_j)\|_{L^q_\tau} \bigg\|_{\l^p_n},
	\end{align*}
	where 
	$$J_3(n) = \bigg( \intt_{\tau=\tau_1 +\tau_2 + \tau_3} \frac{1}{(\jb{\s_1} \jb{\s_2} \jb{\s_3})^{(\frac12 - 2\nu)q'}} \bigg)^{\frac{1}{q'}} \lesssim 1,$$
	from two applications of Lemma~\ref{lemma:convolution}, for $q$ satisfying $\frac{1}{q} > \max\big(4\nu, \frac14 + 3\nu\big)$. Using H\"older's inequality, we have
	\begin{align*}
		\eqref{nonres-L1} & \lesssim \big( \sup_{n} J'_1(n) \big)^{\frac{1}{p'}} \prod_{j=1}^3 \|h_j\|_{\l^p_n L^q_\tau},
	\end{align*}
	for $J'_1$ defined in \eqref{J1aux}. We know that $J'_1$ is uniformly bounded in $n$ from Case 1.1 and the intended estimate follows from H\"older's inequality
	\begin{align*}
		\| h_j\|_{\l^p_n L^q_\tau} = \|u_j\|_{X^{s,\frac12 - 2\nu}_{p,q}} \lesssim \|u_j\|_{X^{s, \frac12 - \nu}_{p,2}},
	\end{align*}
	given that $\frac1q < \frac12 + \nu$. Choosing $q=2-$ and $0<\nu<\tfrac18$ yields the intended result, completing the estimate of $\|\mathcal{NR}(u_1,u_2,u_3)\|_{X^{s,-1}_{p,1}}$.
	
	\smallskip 
	
	\noi\textbf{\underline{Part 3}}
	
	\noi Next, we consider the resonant part $\mathcal{R}$. Since by Cauchy-Schwarz inequality we have
	\begin{align*}
		\| \mathcal{R}(u_1,u_2,u_3)\|_{X^{s,-1}_{p,1}} \lesssim \|\mathcal{R}(u_1,u_2,u_3)\|_{X^{s,-\frac12 + \nu}_{p,2}},
	\end{align*}
	for any $\nu>0$, \eqref{aux_res} follows once we show the following estimate
	\begin{align*}
		\|\mathcal{R} (u_1,u_2,u_3)\|_{X^{s,-\frac12+\nu}_{p,2}} \lesssim \prod_{j=1}^3 \|u_j\|_{X^{s,\frac12}_{p,2}}.
	\end{align*}
	Using Cauchy-Schwarz inequality, we get
	\begin{align*}
		\|\mathcal{R}(u_1,u_2,u_3)\|_{X^{s,-\frac12+\nu}_{p,2}} & \lesssim \bigg\| \intt_{\tau= \tau_1-\tau_2 + \tau_3} \frac{\jb{n}^s|n|}{\jb{\tau - n^3}^{\frac12-\nu}} \prod_{j=1}^3 |\ft{u}_j (\tau_j,n)| \bigg\|_{\l^p_n L^2_\tau} \\
		& \lesssim \big(\sup_{\tau,n} J_4(\tau,n) \big) \bigg\| \jb{n}^s |n| \prod_{j=1}^3 \| \jb{\tau-n^3}^{\frac12 - \nu} \ft{u}_j(\tau,n)\|_{L^2_\tau} \bigg\|_{\l^p_n},
	\end{align*}
	where 
	$$J_4(\tau,n) = \bigg( \intt_{\tau=\tau_1 - \tau_2 + \tau_3} \frac{1}{(\jb{\tau-n^3} \jb{\tau_1 - n^3} \jb{\tau_2 -n^3} \jb{\tau_3 - n^3})^{1-2\nu}} \bigg)^\frac12 \lesssim 1,$$
	by two applications of Lemma~\ref{lemma:convolution}.
	Since we want $\jb{n}^s |n| \lesssim \jb{n}^{3s}$, we must impose the condition $s\geq \frac12$. Thus, using H\"older's inequality we get
	\begin{align*}
		\|\mathcal{R}(u_1,u_2,u_3) \|_{X^{s, -\frac12+\nu}_{p,2}} & \lesssim \prod_{j=1}^3 \big\| \jb{n}^s \jb{\tau-n^3}^{\frac12 - \nu} \ft{u}_j(\tau,n) \big\|_{\l^{3p}_n} \lesssim \prod_{j=1}^3 \|u_j\|_{X^{s,\frac12-\nu}_{p,2}},
	\end{align*}
	completing the estimate for the resonant contribution.
	
\end{proof}

\begin{remark}\rm\label{rm:nonlinear}
	For $s=\frac12$, Proposition~\ref{prop:nonlinear} imposes the restriction $1\leq p <4$. We do not believe this restriction to be sharp. In particular, in Case 1.1, when $\s_{\max} = |\s_0|$, the estimate holds for $1\leq p <\infty$. Note that $J_1(\tau,\nbar)$ in \eqref{J1} can be estimated as follows
	\begin{align*}
		J_1(\tau,\nbar) \lesssim \frac{1}{\jb{\tau-n^3 + \Phi(\nbar)}^{\frac12-}}.
	\end{align*}
	Then, instead of using Cauchy-Schwarz inequality on the time integrals, we can first apply H\"older's inequality on the sum $n=n_1+n_2+n_3$ and use this additional weight to help with summation.
	Unfortunately, we do not know how to extend this strategy to the cases when $\s_{\max} = |\s_j|$ for some $j\in\{1,2,3\}$.
	
\end{remark}

Theorems~\ref{th:lwpreal} and \ref{th:lwp} follow from a contraction mapping argument in $Z^{s,\frac12}_p(T)$ for some $0<T\leq 1$, by combining the linear estimates in Section~\ref{sec:notation} and the nonlinear estimates in Proposition~\ref{prop:nonlinear}.

\section{Non-existence of solutions to the complex-valued mKdV1 } \label{sec:nonexistence}
In this section, we combine the local well-posedness result for mKdV2 \eqref{renorm2} and the argument by Guo-Oh \cite{GO} to show Theorem~\ref{th:nonexistence}.

\begin{proof}[Proof of Theorem~\ref{th:nonexistence}]
	Consider $u_{0N} := \P_{\leq N} u_0$ and $\{u_N\}_{N\in\NB}$ the sequence of smooth global solutions of mKdV1 \eqref{renorm} with $u_N\vert_{t=0 } = u_{0N}$ for $N \in \NB$. Suppose that there exist $T>0$ and a solution $u\in C([-T,T];\FL^{s,p}(\T))$ to mKdV1 \eqref{renorm} such that:
	\begin{enumerate}
		\item[(i)] $u\vert_{t=0} = u_0$;
		\item[(ii)] $u_N \to u$ in $C([-T,T];\FL^{s,p}(\T))$ as $N\to \infty$.
	\end{enumerate}
	
	For the smooth solutions $u_N$, we have conservation of momentum: $P(u_N(t)) = P(u_{0N})$, $t\in[-T,T]$, $N\in\NB$. Thus, the gauge transform $\Gg_2$ is well-defined and invertible. Let $v_N:= \Gg_2(u_N)$, which is a smooth global solution of mKdV2 \eqref{renorm2} with initial data $u_{0N}$. 
	Then, by the local well-posedness of mKdV2 \eqref{renorm2}, there exists $T'=T'(\|u_0\|_{\FL^{s,p}})>0$ such that $v_N\in Z^{s,\frac12}_p(T')$, for some $T\geq T'=T'(\|u_0\|_{\FL^{s,p}})>0$\footnote{From unconditional uniqueness of mKdV2 \eqref{renorm2} at high regularity, the solutions $v_N$ coincide with the solutions constructed in Theorem~\ref{th:lwp} with initial data $u_{0N}$. Moreover, there exists $T'=T'(\|u_0\|_{\FL^{s,p}})>0$ such that $v_N \in C\big([-T',T']; \FL^{s,p}(\T)\big)$ for every $N\in\NB$. }. Now, we want to show that $\{v_N\}_{N\in \NB}$ converges in $C([-T',T'];\FL^{s,p}(\T))$. From continuous dependence of solutions of mKdV2 \eqref{renorm2} on the initial data in Theorem~\ref{th:lwp}, it follows that
	\begin{align*}
		\|v_N - v_M\|_{C_T \FL^{s,p}} & \lesssim \|v_N - v_M\|_{Z^{s,\frac12}_p(T)} \lesssim \|u_{0N} - u_{0M}\|_{\FL^{s,p}} \to 0
	\end{align*}
	as $N,M\to\infty$, since $\{u_{0N}\}_{N\in \NB}$ converges in $\FL^{s,p}(\T)$.
	Consequently, there exists $v\in C([-T',T'];\FL^{s,p}(\T))$ such that $v_N \to v$.
	
	Now, we want to exploit the rapid oscillation of the phase introduced by $\Gg_2$ to arrive at a contradiction. Let $\phi\in C_c^\infty([-T',T']\times \T)$ be a test function. Since $\FL^{s,p}(\T) \subset L^2(\T)$ for this range of $(s,p)$, $u_N \to u$ in $C([-T',T'];L^2(\T))$ and this implies
	\begin{align*}
		\jb{u_N(t, \cdot) , \phi(t, \cdot)}_{L^2_x} \to \jb{u(t, \cdot), \phi(t,\cdot)}_{L^2_x} \quad \text{ as } N\to\infty.
	\end{align*}
	Let $F(t):= \jb{u(t, \cdot), \phi(t,\cdot)}_{L^2_x}$, which is a continuous function supported on $[-T',T']$.
	Then, $F\in L^1(\R)$ and by the Riemann-Lebesgue Lemma,
	\begin{align}
		| \ft{F}(\tau)| \to 0 \text{ as } |\tau| \to \infty.
		\label{illF}
	\end{align}
	Now, we focus on the convergence of $\{v_N\}_{N\in\NB}$ in the sense of distributions. Namely, we have
	\begin{align*}
		\bigg| \int_\R\int_\T v_N \phi \ dx \ dt \bigg| & = \bigg| \int_\R\int_\T  e^{\mp i P(u_{0N})t} u_N(t,x) \phi(t,x) \ dx \ dt \bigg| \\
		& \leq |\ft{F}(\pm P(u_{0N})) | + \int_{-T'}^{T'} | \jb{u_N(t, \cdot) - u(t, \cdot), \phi(t,\cdot)}_{L^2_x} | dt \to 0 
	\end{align*}
	as $N\to\infty$. The first term converges to zero as a consequence of \eqref{illF} and the assumption that $|P(u_{0N})| \to \infty$, while the second is a consequence of $u_N\to u$ in $C([-T',T'];L^2(\T))$.
	Hence, $\{v_N\}_{N\in\NB}$ converges to zero in the sense of distributions and to $v$ in $C([-T',T'];\FL^{s,p}(\T))$. Therefore, $v\equiv 0$. However, $0 = v(0) = u_0$, which means that $P(u_0)$ must be finite, i.e., $|P(\P_{\leq N}u_0)| = |P(u_{0N})|$ converges as $N\to\infty$, which contradicts the assumption on the initial data.
\end{proof}

The non-existence of solutions for the complex-valued mKdV1 equation \eqref{renorm} for initial data with infinite momentum suggests that the mKdV2 equation \eqref{renorm2} is the correct model to study outside $H^\frac12(\T)$. In the following section, we show that imposing the conditional convergence of the momentum of the initial data (in the sense of Definition~\ref{def:P}) is sufficient for the corresponding solutions of mKdV2 \eqref{renorm2} to have finite and conserved momentum. Consequently, we can make sense of the gauge transformation $\Gg_2$ at low regularity and obtain solutions for the complex-valued mKdV1 equation \eqref{renorm}.

\section{Existence of solutions to the complex-valued mKdV1 equation with finite momentum}\label{sec:momentum}
In this section, using the energy estimate in Proposition~\ref{prop:momentumEstimate}, we show conservation of momentum at low regularity. As a consequence, we can make sense of the nonlinearity of the complex-valued mKdV1 equation \eqref{renorm} and show the existence of solutions to the complex-valued mKdV1 equation \eqref{renorm} outside $H^\frac12(\T)$.

\begin{proof}[Proof of Proposition~\ref{prop:Pconservation}]
	Let $u_{0M} = \P_{\leq M} u_0$ and $u_M$ be the corresponding smooth global solution of mKdV2 \eqref{renorm2}. Then, using Theorem~\ref{th:lwp}, there exist a time $T=T\big( \|u_0\|_{\FL^{s,p}} \big)>0$ and a solution $u \in C\big( [-T,T]; \FL^{s,p}(\T) \big)$ of mKdV2 \eqref{renorm2} such that
	\begin{equation}
		u_M \to u \quad \text{in} \quad C\big([-T,T]; \FL^{s,p}(\T)\big), \label{smooth_converge}
	\end{equation}
	as $M\to\infty$.

	In order to show convergence of $\{P(\P_{\leq N} u(t))\}_{N\in\NB}$, $t\in[-T,T]$, and its conservation, we will fix $t\in[-T,T]$ and prove the following
	\begin{align}
		P(\P_{\leq N} u(t)) &= \lim\limits_{M\to\infty} P(\P_{\leq N} u_M(t)) \label{cons1},\\
		\lim\limits_{N\to\infty} \lim\limits_{M\to\infty} P(\P_{\leq N} u_M(t)) &= \lim_{M\to\infty} P(u_M(t)) \label{cons2}.
	\end{align}
	If the two equalities hold, we have
	\begin{align*}
		\lim\limits_{N\to\infty} P(\P_{\leq N} u(t)) = \lim\limits_{M\to\infty} P(u_M(t)) = \lim\limits_{M\to\infty} P(u_{0M}) = \lim\limits_{M\to\infty} P(\P_{\leq M} u_0) = P(u_0),
	\end{align*}
	using the conservation of momentum for smooth solutions $u_M$ and the assumption of finite momentum of $u_0$, in the sense of Definition~\ref{def:P}. 
	
	We start by showing \eqref{cons1}. Note that, for each fixed $N\in\NB$,
	\begin{align*}
		& \big| P(\P_{\leq N} u)(t) - P(\P_{\leq N} u_M)(t) \big| \\
		& \quad  \leq \sum_{|n|\leq N} |n| \big|\ft{u}(t,n) - \ft{u}_M(t,n)\big| \big( |\ft{u}(t,n)| + |\ft{u}_M(t,n)|\big)\\
		& \quad \lesssim N^{\frac{p-2}{p}} \|u - u_M\|_{C_T \FL^{s,p}} \big( \|u\|_{C_T \FL^{s,p}} + \|u_M\|_{C_T \FL^{s,p}} \big),
	\end{align*}
	which implies \eqref{cons1} due to \eqref{smooth_converge}.
	
	Now, we want to show \eqref{cons2}. Since $P(\P_{\leq N} u_M(t)) = P(u_M(t)) - P(\P_{>N} u_M(t)) $, we will focus on showing that the second term goes to zero.
	Note that 
	\begin{align}
		|P(\P_{>N} u_M(t))| & \leq | P(\P_{>N} u_M(t)) - P(\P_{>N} u_{0M}) | + | P(\P_{>N} u_{0M})|. \label{cons3}
	\end{align}
	Using Proposition~\ref{prop:momentumEstimate}, for some $0<\eps\ll 1$, we have
	\begin{align*}
		| P(\P_{>N} u_M(t)) - P(\P_{>N} u_{0M}) |&\lesssim N^{-\eps} \big( \|u_{0M}\|^4_{\FL^{s,p}} + \|u_M\|^4_{C_T \FL^{s,p}} + \|u_M\|^6_{X^{s,\frac12}_{p,2}} \big) \\
		& \lesssim N^{-\eps} \big( \|u_{0}\|^4_{\FL^{s,p}} + \|u_0\|^6_{\FL^{s,p}} \big),
	\end{align*}
	which shows that $\lim\limits_{N\to\infty} \lim\limits_{M\to\infty} \big( P(\P_{>N} u_M(t)) - P(\P_{>N} u_{0M}) \big)=0$. Focusing on the last term of \eqref{cons3}, we have
	\begin{align*}
		P(\P_{>N} u_{0M}) &  = P(\P_{>N}\P_{\leq M} u_0) = P(\P_{\leq M} u_0) - P(\P_{\leq N} \P_{\leq M} u_0).
	\end{align*}
	Taking a limit as $M\to\infty$ first and then $N\to\infty$, both terms converge to $P(u_0)$ and the result follows.
	
\end{proof}

Proposition~\ref{prop:Pconservation} gives a new interpretation of finite momentum and its conservation at low regularity. Exploiting this conservation, we can make sense of the nonlinearity of the complex-valued mKdV1 equation \eqref{renorm} and show the existence of solutions, outside of $H^\frac12(\T)$.

\begin{proof}[Proof of Proposition~\ref{prop:weak}]
	
	Let $u_0\in\FL^{s,p}(\T)$ with finite momentum in the sense of Definition~\ref{def:P}. Given $N\in\NB$, let $u_{0N} = \P_{\leq N} u_0$ be smooth functions and $v_N$ be the corresponding smooth global solutions of mKdV2 \eqref{renorm2}. From Theorem~\ref{th:lwp} and a persistence of regularity argument, we can show that there exists $T=T\big(\|u_0\|_{\FL^{s,p}(\T)}\big)>0$ and a solution $v\in C\big( [-T,T]; \FL^{s,p}(\T) \big) \cap X^{s,\frac12}_{p,2}(T)$ of mKdV2 \eqref{renorm2} such that 
	\begin{equation*}
		v_N \to v \quad \text{in} \quad Z^{s,\frac12}_p(T).
	\end{equation*} 
	Since $v_N$ are smooth solutions, conservation of momentum holds and $P(v_N(t)) = P(u_{0N})$ for all $t\in\R$. 
	Let $u_N := \Gg_2^{-1}(v_N) = e^{\pm i P(u_{0N}) t} v_N$, which is a smooth global solution of mKdV1 \eqref{renorm} with initial data $u_{0N}$, $N\in\NB$. We want to show that the sequence $\{u_N\}_{N\in\NB}$ converges to $u:=e^{\pm i P(u_0)t} v$ in $Z^{s,\frac12}_p (T)$. The limit $u$ will be our candidate solution in $C([-T,T];\FL^{s,p}(\T))$.
	First, we have that
	\begin{align*}
		\|u_N - u\|_{C_T\FL^{s,p}} 
		& \leq T\big| P(u_{0N}) - P(u_0) \big| \|v\|_{C_T\FL^{s,p}} + \|v_n - v \|_{C_T\FL^{s,p}} \to 0,
	\end{align*}
	using the assumption on the momentum and the convergence of $\{v_N\}_{N\in\NB}$.
	Moreover, $u\in Z^{s,\frac12}_p (T)$, since
	\begin{align*}
		\| u\|_{Z^{s,\frac12}_p(T)} 
		& \lesssim \jb{P(u_0)}^\frac12 \|v\|_{X^{s,\frac12}_{p,2}(T)} + \|v\|_{X^{s,0}_{p,1}(T)} <\infty.
	\end{align*}
	If we show that the sequence $\{u_N\}_{N\in\NB}$ is Cauchy in $Z^{s,\frac12}_p(T_*)$ for some $0<T_*\leq T$, the convergence to $u$ in this space will follow. For $N,M\in\NB$, $u_N$ and $u_M$ are smooth solutions of mKdV1 \eqref{renorm}, thus using Lemma~\ref{lm:linear} and Proposition~\ref{prop:nonlinear}, we have
	\begin{multline*}
		\|u_N - u_M\|_{Z^{s,\frac12}_p(T)} \leq C_1 \|u_{0N} - u_{0M} \|_{\FL^{s,p}} + C_2 T^\delta |P(u_{0N}) - P(u_{0M})| \|u_{N}\|_{Z^{s,\frac12}_p(T)} \\
		+ C_3 T^\delta \Bigg( \Big(\|u_N\|_{Z^{s,\frac12}_p(T)} + \|u_M\|_{Z^{s,\frac12}_p(T)} \Big)^2  + |P(u_{0M})| \Bigg)\|u_N - u_M\|_{Z^{s,\frac12}_p(T)},
	\end{multline*}
	for some constants $C_1,C_2,C_3>0$.
	By the definition of $u_N$ and the continuous dependence on the initial data for mKdV2 \eqref{renorm2}, for large enough $N$, we have $\|u_N\|_{Z^{s,\frac12}_p(T)} \leq C( \|u_0\|_{\FL^{s,p}} + 1 )$, for some $C>0$. Analogously, for large enough $N$, $|P(u_{0N})| \leq |P(u_0)| + 1 $. Consequently, 
	\begin{align*}
		& \|u_N - u_M\|_{Z^{s,\frac12}_p(T)}\\
		& \quad \leq C_1 \|u_{0N} - u_{0M} \|_{\FL^{s,p}} + C C_2 T^\delta (\|u_0\|_{\FL^{s,p}} + 1) |P(u_{0N}) -P(u_{0M})| \\
		& \quad + C_3T^\delta \Big( 4 C^2 \big( \|u_0\|_{\FL^{s,p}} + 1 \big)^2 + \big(|P(u_0)|+1) \Big) \|u_N - u_M \|_{Z^{s,\frac12}_p(T)},
	\end{align*}
	for $N,M$ large enough.
	Choosing $0<T_0\leq T$ such that
	\begin{align*}
		C_3 T_0^\delta \Big( 4 C^2 \big( \|u_0\|_{\FL^{s,p}} + 1 \big)^2 + \big(|P(u_0)|+1) \Big) < \frac12,
	\end{align*}
	it follows that 
	\begin{equation}
		\begin{aligned}
			\|u_N - u_M\|_{Z^{s,\frac12}_p(T_0)} &\leq 2 C_1 \|u_{0N} - u_{0M} \|_{\FL^{s,p}} \\
			& \quad + 2C C_2 T_0^\delta (\|u_0\|_{\FL^{s,p}} + 1) |P(u_{0N}) -P(u_{0M})|. 
		\end{aligned}\label{aux2_exist}
	\end{equation}
	By iterating this approach, we can cover the whole interval $[-T,T]$ and the estimate \eqref{aux2_exist} holds with $T$ instead of $T_0$.
	Thus, $\{u_N\}_{N\in\NB}$ is a Cauchy sequence in $Z^{s,\frac12}_p (T)$ and $u_N \to u$ in $Z^{s,\frac12}_p(T)$. 
	
	Now, we want to show that $u$ satisfies mKdV1 \eqref{renorm} in the sense of distributions, with the nonlinearity interpreted as 
	\begin{align*}
		\mathbf{N}(u):= \N^*(u) + iP(u_0)u.
	\end{align*}
	Considering the linear part and any test function $\phi \in C^\infty_c([-T,T] \times \T)$, it follows that
	\begin{align*}
		\big|\jb{u-u_N, (\partial_t + \partial_x^3) \phi}_{t,x} \big| 
		& \lesssim_{\phi} \|u-u_N\|_{X^{s, \frac12}_{p,2}(T)} \to 0,
	\end{align*}
	as $N\to\infty$, which implies that $(\partial_t + \partial_x^3)u_N \to (\partial_t + \partial_x^3)u$ in the sense of distributions.	
	
	For the nonlinearity, using the fact that $\N(u_N) = \N^*(u_N) + iP(u_N)u_N$, it follows that
	\begin{multline*}
		\big| \jb{\N(u_N) - \mathbf{N}(u), \phi}_{t,x}\big| 
		\lesssim_{\phi} \| \N^*(u_N) - \N^*(u) \|_{X^{s,-\frac12}_{p,2}(T)} \\
		+ |P(u_{0N}) - P(u_{0})| \|u_N\|_{X^{s,\frac12}_{p,2}(T)}
		+ |P(u_{0})| \|u_N - u\|_{X^{s,\frac12}_{p,2}(T)} .	
	\end{multline*}
	Using the convergence of momentum $P(u_{0N})  \to P(u_0)$ and of $\{u_N\}_{N\in\NB}$, it suffices to estimate the first term on the right-hand side. 
	We can write $\N^*(u_N) - \N^*(u) = \N^*(u_N-u, u_N, u_N) + \N^*(u, u_N-u, u_N) + \N^*(u, u, u_N-u)$ and using the nonlinear estimate in Proposition~\ref{prop:nonlinear}, we have that
	\begin{align*}
		\| \N^*(u_N) - \N^*(u) \|_{X^{s,-\frac12}_{p,2}(T) } 
		& \lesssim \|u_N - u\|_{X^{s,\frac12}_{p,2}(T)} \bigg( \|u_N\|_{X^{s,\frac12}_{p,2}(T)} + \|u\|_{X^{s,\frac12}_{p,2}(T)} \bigg)^2,
	\end{align*}
	and the convergence follows from that of $\{u_N\}_{N\in\NB}$.
	The limit $u$ satisfies the following equation
	\begin{align*}
		\partial_t u + \partial_x^3 u = \pm \big( \N^*(u) + iP(u_0) u \big),
	\end{align*}
	in the sense of distributions, where $P(u_0)$ is interpreted in the sense of Definition~\ref{def:P}.
	
\end{proof}

\section{Momentum estimate}\label{sec:energy}
In this section, we establish an energy estimate on smooth solutions of the mKdV2 equation \eqref{renorm2}, namely we prove Proposition~\ref{prop:momentumEstimate}. This proof follows the argument by Nakanishi-Takaoka-Tsutsumi \cite{NTT} and is essential in showing the conservation of momentum at low regularity.

We start by recalling some embeddings used in the proof. From \cite{BO2}, we have the following $L^6$-Strichartz estimates
\begin{align}
	X^{0+,\frac12+}_{2,2} \subset L^6_{t,x} \label{L6}.
\end{align}
Interpolating \eqref{L6} with the Sobolev inequality $X^{\frac13+, \frac13+}_{2,2} \subset L^6_{t,x}$, we have the following
\begin{align}
	X^{0+, \frac12-}_{2,2} \subset L^6_{t,x}. \label{L6new}
\end{align}
We will also need the fact that multiplication by a sharp cut-off is a bounded operation in $X^{s,b}_{2,2}$ (see \cite{DD}, for example).
\begin{lemma}
	Let $s\geq 0$, $0\leq b <\frac12$ and fix $T>0$. Then, the following estimate holds
	\begin{align*}
		\|\chi_{[0,T]}(t) u \|_{X^{s,b}_{2,2}} \lesssim \|u\|_{X^{s,b}_{2,2}}.
	\end{align*}
	\label{lm:cutoff}
\end{lemma}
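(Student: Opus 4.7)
The plan is to reduce the statement to a one-variable (Hilbert-valued) multiplier bound for $\chi_{[0,T]}$ on the Sobolev space $H^b_t$. First I would conjugate by the linear propagator: the map $U : u(t,x) \mapsto v(t,x) := (S(-t)u)(t,\cdot)(x)$ replaces, on the Fourier side, $\ft{u}(\tau,n)$ by $\ft{u}(\tau+n^3,n)$, so that the weight $\jb{\tau-n^3}^b$ becomes $\jb{\tau}^b$. Hence $U$ is an isometry from $X^{s,b}_{2,2}$ onto the Hilbert-valued Sobolev space $H^b_t(\R;H^s_x(\T))$. Since $\chi_{[0,T]}$ depends only on $t$, it commutes with $U$, and the problem reduces to proving
\begin{align*}
\|\chi_{[0,T]} f\|_{H^b_t(\R;\H)} \lesssim \|f\|_{H^b_t(\R;\H)}
\end{align*}
for an arbitrary Hilbert space $\H$, any $T>0$, and $0\le b<\tfrac12$, with an implicit constant independent of $T$.

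Next I would establish this scalar estimate via the Slobodeckii characterization
\begin{align*}
\|f\|_{H^b}^2 \sim \|f\|_{L^2}^2 + \iint_{\R^2} \frac{\|f(t)-f(s)\|_{\H}^2}{|t-s|^{1+2b}}\,dt\,ds, \qquad 0<b<1.
\end{align*}
The $b=0$ case is trivial. For $0<b<\tfrac12$, I would split the Gagliardo integral for $\chi_{[0,T]}f$ according to whether each of $s,t$ lies in $[0,T]$ or in $[0,T]^c$. When both variables lie in $[0,T]$ the integrand equals the original one $\|f(t)-f(s)\|^2/|t-s|^{1+2b}$, and when both lie outside $[0,T]$ the integrand vanishes; both contributions are thus dominated by $\|f\|_{H^b}^2$ for free.

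The heart of the argument is the mixed contribution, where (by symmetry) $s\in[0,T]$ and $t\in[0,T]^c$. Here the difference reduces to $-f(s)$, and computing the inner integral in $t$ explicitly gives
\begin{align*}
\int_0^T \|f(s)\|_{\H}^2 \!\left(\int_{[0,T]^c}\!\! \frac{dt}{|t-s|^{1+2b}}\right)\!ds \;\lesssim\; \int_0^T \|f(s)\|_{\H}^2 \bigl(s^{-2b}+(T-s)^{-2b}\bigr)\,ds.
\end{align*}
This I would absorb using the fractional Hardy inequality
\begin{align*}
\int_\R \frac{\|g(s)\|_{\H}^2}{|s-s_0|^{2b}}\,ds \;\lesssim\; \|g\|_{H^b(\R;\H)}^2, \qquad 0<b<\tfrac12,
\end{align*}
applied with $s_0=0$ and $s_0=T$. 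The main obstacle is precisely this boundary term: the restriction $b<\tfrac12$ is sharp here, since both the time integral above and the fractional Hardy inequality diverge at $b=\tfrac12$, which reflects the familiar failure of $\chi_{[0,T]}$ as a Sobolev multiplier at the endpoint. Collecting the three regions yields the desired bound uniformly in $T$, and undoing the conjugation by $U$ completes the proof.
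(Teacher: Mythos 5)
Your proof is correct. One point of comparison worth knowing: the paper does not actually prove this lemma at all --- it states it and cites de Bouard--Debussche \cite{DD}, so your argument supplies the proof that the paper outsources to the literature. Your route is the standard one and is complete: the conjugation by $S(-t)$ is indeed an isometry $X^{s,b}_{2,2} \to H^b_t(\R; H^s_x(\T))$ (on the Fourier side $\ft{u}(\tau,n) \mapsto \ft{u}(\tau+n^3,n)$ shifts the weight $\jb{\tau-n^3}^b$ to $\jb{\tau}^b$), and it commutes with multiplication by $\chi_{[0,T]}$ because the cutoff is scalar in $t$; the Slobodeckii splitting then isolates the only nontrivial contribution in the mixed region, where the inner $t$-integral produces exactly $\frac{1}{2b}\big(s^{-2b}+(T-s)^{-2b}\big)$ and the one-dimensional fractional Hardy inequality (valid precisely for $0<b<\tfrac12$, with translation invariance giving uniformity in $T$) closes the estimate. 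Two small points you should make explicit if you write this up: first, both the Gagliardo characterization of $H^b$ and the fractional Hardy inequality are stated in the scalar case in most references, so note that they transfer to $\H$-valued functions by expanding in an orthonormal basis of $\H$ and summing the scalar inequalities (all norms involved are $\ell^2$ in the coordinates); second, your implicit constant degenerates both as $b\to\tfrac12^-$ (Hardy) and as $b\to 0^+$ (the factor $\tfrac{1}{2b}$), which is harmless here since $b$ is fixed and $b=0$ is handled by the trivial $L^2$ bound, but it means the argument does not give a constant uniform over $b\in[0,\tfrac12)$.
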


\begin{proof}[Proof of Proposition~\ref{prop:momentumEstimate}]
	Using the Fundamental Theorem of Calculus and the equation \eqref{renorm2} on the Fourier side, we have the following
	\begin{align*}
		& \big|P\big(\P_{>N}u(t)\big) - P\big(\P_{>N}u(0)\big)\big| \\
		& \quad = \Bigg| \sum_{|n|>N} n \big( |\ft{u}(t,n)|^2 - |\ft{u}(0,n)|^2 \big) \Bigg| \\
		& \quad  = \Bigg| 2 \sum_{|n|>N} n  \Re \int_0^t (\partial_t  \ft{u}(t',n) ) \conj{\ft{u}} (t',n) \ dt' \Bigg| \\
		&\quad  = \Bigg| 2 \Im \int_0^t \sum_{|n|>N} \sum_{\nbar\in\Lambda(n)} nn_3  \ft{u}(t',n_1) \conj{\ft{u}}(t',-n_2) \ft{u}(t',n_3) \conj{\ft{u}}(t',n) \ dt' \Bigg|.
	\end{align*}
	Let $|n_{\min}| \leq |n_{\med}| \leq |n_{\max}|$ denote the ordered rearrangement of $n_1,n_2,n_3$. We will consider the following 6 cases depending on the relative size of the frequencies:
	\begin{itemize}
		\item Case 1: $ |n_{\max}| \gg | n_{\med}| \gtrsim |n_3|$ or $|n_{\max}| \sim |n_{\med} | \gg |n_3|$
		\item Case 2: $ |n_{\max}| \gg |n_3| \gg |n_{\min}| $
		\item Case 3:  $|n_3| \sim |n_{\med}| \gg |n_{\min}|$
		\item Case 4:  $|n_3| \gg |n_{\med}| \geq |n_{\min}| \ges |n_3|^\frac12$ or $|n_3| \gg |n_{\med}| \ges |n_3|^\frac12 \gg |n_{\min}|$
		\item Case 5: $|n_1| \sim |n_2| \sim |n_3|$
		\item Case 6: $ |n_3|^\frac12 \gg |n_1|, |n_2|$
	\end{itemize}
	In Cases 1--4, the difference can be estimated directly, while in Cases 5 and 6 we will require the normal form approach.
	
	\medskip
	\noi\underline{\textbf{Part 1}}\\
	We start by focusing on Cases 1--4. 
	Let $\s_j:= \tau_j - n_j^3$, $j=1,2,3$, and $\s_0:= \tau - n^3$ denote the modulations. The following relation holds
	\begin{equation*}
		-\s_0 + \s_1 + \s_2 + \s_3 = n^3 - n_1^3 - n_2^3 - n_3^3 =  \Phi(\nbar).
	\end{equation*}
	In Cases 1--4, the resonance relation $\Phi(\nbar)$ satisfies the following
	\begin{equation*}
		|n_{\max}|^2 \ld \sim |\Phi(\nbar)| \lesssim \s_{\max} := \max_{j=0,\ldots,3}|\s_j|, 
	\end{equation*}
	where $\lambda \in \{ |n_1+n_2|, |n_1 + n_3| , |n_2+n_3| \}$. Let $\mu_j = (\tau_j, n_j)$, $j=1,\ldots,3$, $\mu = (\tau,n)$ and assume that $\s_{\max} = |\s_0|$, as the remaining cases can be handled analogously. In order to extend the integral from $[0,t]$ to the whole real line, we must associate the time-cutoff with one of the factors, for example $\ft{u}(t,n_1)$. Using Parseval's identity, we have that
	\begin{multline}
		|P(\P_{>N}u(t)) - P(\P_{>N}u(0))|
		= \Bigg| 2 \Im \intt_{\tau = \tau_1 + \tau_2 + \tau_3 }\sum_{|n|>N} \sum_{\nbar \in \Lambda(n)} nn_3 \\
		\times  \Ft_{t,x} \big( \chi_{[0,t]} u\big)(\mu_1) \conj{\ft{u}}(-\mu_2) \ft{u}(\mu_3) \conj{\ft{u}}(\mu) \, d\tau_1  \, d\tau_2 \, d\tau_3 \Bigg|. \label{energy1}
	\end{multline}
	
	We focus on showing the estimate for Case 4, as the remaining cases follow similar arguments.
	In Case 4 (i), $|\Phi(\nbar)| \ges |n_3|^2$ and we can estimate the multiplier as 
	$$\frac{|n n_3|}{|\Phi(\nbar)|^\frac12} \lesssim |n_3| \lesssim N^{0-} (\jb{n}\jb{n_1}\jb{n_2}\jb{n_3})^{\frac13+}.$$
	In Case 4 (ii), $|\Phi(\nbar)| \sim |n_3|^2 |n_{\med}| \ges |n_3|^{\frac52}$. Thus, 
	$$\frac{|nn_3|}{|\Phi(\nbar)|^\frac12} \lesssim |n_3|^{\frac34} \lesssim N^{0-} (\jb{n} \jb{n_{\med}} \jb{n_3})^{\frac{3}{10}+}.$$
	Consider the following notation
	\begin{align*}
		\ft{f}_1(\tau,n) &= \jb{n}^{\frac13+} \big| \Ft_{t,x}(\chi_{[0,t] } u) (\tau,n) \big|,\\ \ft{f}_2(\tau,n) &= \jb{n}^{\frac13+} | \ft{u} (\tau,n)|,\\
		\ft{f}_3(\tau,n) &= \jb{n}^{\frac13+} \jb{\tau-n^3}^\frac12 |\ft{u}(\tau,n)|.
	\end{align*}
	For both cases, using H\"older's inequality, $L^6$-Strichartz \eqref{L6new} and Lemma~\ref{lm:cutoff}, we have
	\begin{align*}
		\eqref{energy1} 
		& \lesssim \frac{1}{N^{0+}} \|  f_1 f_2^2 f_3 \|_{L^1_{t,x}}\\
		& \lesssim \frac{1}{N^{0+}}  \|f_1\|_{L^6_{t,x}} \|f_2\|_{L^6_{t,x}}^2 \| f_3\|_{L^2_{t,x}}\\
		& \lesssim \frac{1}{N^{0+}} \|\chi_{[0,t]}u\|_{X^{\frac14+,\frac12-}_{2,2}} \|u\|_{X^{\frac14+, \frac12-}_{2,2}}^2 \|u\|_{X^{\frac14+, \frac12}_{2,2}}\\
		&\lesssim \frac{1}{N^{0+}} \|u\|_{X^{s,\frac12}_{p,2}}^4,
	\end{align*}
	for $1\leq p <\infty$ and $s>\max\big(\frac13, \frac56-\frac1p\big)$.

	\medskip
	\noi\underline{\textbf{Part 2}}\\
	We now focus on Cases 5--6. 
	Since $P(\P_{>N}u(t) ) = P(\P_{>N}v(t))$, where $v(t) = S(-t) u(t)$ stands for the interaction representation, the difference of momenta can be written as follows, in terms of $v$,
	\begin{multline*}
		P\big(\P_{>N}v(t)\big) - P\big(\P_{>N}(v(0)\big)  \\
		= - 2 \Im \int_0^t \sum_{|n|>N} \sum_{\nbar\in\Lambda(n)}\!\!\! nn_3 e^{-it'\Phi(\nbar)} \ft{v}(n_1) \conj{\ft{v}}(-n_2) \ft{v}(n_3) \conj{\ft{v}}(n) \, dt'.
	\end{multline*}
	Using integration by parts, we obtain
	\begin{align*}
		& \Im \int_0^t \sum_{|n|>N} \sum_{\nbar\in\Lambda(n)} nn_3 \frac{d}{dt}\bigg( \frac{e^{-it'\Phi(\nbar)}}{-i\Phi(\nbar)} \bigg) \ft{v}(n_1) \conj{\ft{v}}(-n_2) \ft{v}(n_3) \conj{\ft{v}}(n) \, dt' \\
		& \phantom{XXXXX} =  -\Re \sum_{|n|>N} \sum_{\nbar\in\Lambda(n)} \frac{nn_3}{\Phi(\nbar)} \bigg( e^{-it\Phi(\nbar)} \ft{v}(t,n_1) \conj{\ft{v}}(t,-n_2) \ft{v}(t,n_3) \conj{\ft{v}}(t,n) \\
		& \phantom{XXXXXXXXXXXXXXXXXXX}- \ft{v}(0,n_1) \conj{\ft{v}}(0,-n_2) \ft{v}(0,n_3) \conj{\ft{v}}(0,n) \bigg) \\ 
		&\phantom{XXXXXxx} + \Re \int_0^t \sum_{|n|>N} \sum_{\nbar\in\Lambda(n)} \frac{nn_3}{\Phi(\nbar)} e^{-it' \Phi(\nbar)} \partial_t\big( \ft{v}(n_1) \conj{\ft{v}}(-n_2) \ft{v}(n_3) \conj{\ft{v}}(n) \big) \, dt'.
	\end{align*}
	In order to estimate the last term on the right-hand side, we will use the equation for $v$ again, substituting the time derivative by the corresponding resonant and non-resonant nonlinear terms. Therefore, writing the terms depending on $u$, we are interested in estimating the following quantities
	\begin{align*}
		\mathcal{B}(t) & = \Re \sum_{|n|>N} \sum_{\nbar\in\Lambda(n)} \frac{nn_3}{\Phi(\nbar)} \ft{u}(t,n_1) \conj{\ft{u}}(t,-n_2) \ft{u}(t,n_3) \conj{\ft{u}}(t,n), \\
		\mathcal{R}_0 & = \Im \int_0^t \sum_{|n|>N} \sum_{ \nbar\in\Lambda(n)} \frac{n^2 n_3}{\Phi(\nbar)} \ft{u}(n_1)\conj{\ft{u}}(-n_2) \ft{u}(n_3) \conj{\ft{u}}(n) |\ft{u}(n)|^2 \, dt', \\
		\mathcal{R}_1 & = \Im \int_0^t \sum_{|n|>N} \sum_{\nbar\in \Lambda(n)} \frac{nn_1 n_3}{\Phi(\nbar)} \ft{u}(n_1) |\ft{u}(n_1)|^2 \conj{\ft{u}}(-n_2) \ft{u}(n_3) \conj{\ft{u}}(n) \, dt' ,\\
		\mathcal{R}_2 & = \Im \int_0^t \sum_{|n|>N} \sum_{\nbar\in \Lambda(n)} \frac{nn_2 n_3}{\Phi(\nbar)} \ft{u}(n_1)  \conj{\ft{u}}(-n_2) |\ft{u}(-n_2)|^2 \ft{u}(n_3) \conj{\ft{u}}(n) \, dt' ,\\
		\mathcal{R}_3 & = \Im \int_0^t \sum_{|n|>N} \sum_{\nbar\in\Lambda(n)} \frac{n n_3^2}{\Phi(\nbar)} \ft{u}(n_1)  \conj{\ft{u}}(-n_2) \ft{u}(n_3) |\ft{u}(n_3)|^2\conj{\ft{u}}(n) \, dt' ,\\
		\mathcal{NR}_0 & =\Im\int_0^t \sum_{|n|>N} \sum_{\substack{\nbar\in\Lambda(n),\\\mbar\in \Lambda(-n)}} \frac{nn_3m_3 }{\Phi(\nbar)} \ft{u}(n_1) \conj{\ft{u}}(-n_2) \ft{u}(n_3) \conj{\ft{u}}(-m_1) {\ft{u}}(m_2) \conj{\ft{u}}(-m_3) \, dt',\\
		\mathcal{NR}_1 & = \Im\int_0^t \sum_{|n|>N} \sum_{\substack{\nbar\in\Lambda(n),\\ \mbar \in \Lambda(n_1)}} \frac{nn_3m_3}{\Phi(\nbar)} \conj{\ft{u}}(-n_2) \ft{u}(n_3) \conj{\ft{u}}(n) \ft{u}(m_1) \conj{\ft{u}}(-m_2) \ft{u}(m_3) \, dt',\\
		\mathcal{NR}_2 & = \Im\int_0^t \sum_{|n|>N} \sum_{\substack{\nbar\in\Lambda(n),\\ \mbar\in\Lambda(n_2)}} \frac{nn_3m_3}{\Phi(\nbar)} \ft{u}(n_1) \ft{u}(n_3) \conj{\ft{u}}(n) \conj{\ft{u}}(-m_1) {\ft{u}}(m_2) \conj{\ft{u}}(-m_3)  \, dt',\\
		\mathcal{NR}_3 & = \Im\int_0^t \sum_{|n|>N} \sum_{\substack{\nbar\in\Lambda(n),\\ \mbar \in \Lambda(n_3)}} \frac{nn_3m_3}{\Phi(\nbar)} \ft{u}(n_1) \conj{\ft{u}}(-n_2) \conj{\ft{u}}(n) \ft{u}(m_1) \conj{\ft{u}}(-m_2) \ft{u}(m_3)  \, dt',
	\end{align*}
	where $\mbar = (m_1,m_2,m_3)$.
	
	\medskip
	\noi$\bullet$ \textbf{Estimate for $\mathcal{B}(t)$}
	
	\smallskip
	\noi\textbf{\underline{Case 5: $|n_1| \sim |n_2| \sim |n_3|$}}\\
	Note that $|\Phi(\nbar)| \sim |n_3| \ld_1 \ld_2$, where $\ld_1,\ld_2 \in \{ |n_1+n_2|, |n_1+n_3|, |n_2+n_3|\}$, $\ld_1\neq\ld_2$. Assume that $\ld_1 = |n_1+n_3|, \ld_2 =|n_2+n_3|$. We will omit the estimate for the remaining choices of $\ld_1, \ld_2$, as it follows an analogous approach. Therefore, we have that
	\begin{align*}
		\frac{|nn_3|}{|\Phi(\nbar)| (\jb{n}\jb{n_1}\jb{n_2}\jb{n_3})^{\frac14+}} \lesssim \frac{1}{N^{0+}\jb{n_1+n_3} \jb{n_2+n_3}}.
	\end{align*}
	Hence, with $g(t,n) = \jb{n}^s |\ft{u}(t,n)|$, using H\"older's inequality and the fact that $|n|\lesssim |n_j|$, $j=1,2,3$, it follows that
	\begin{align*}
		|\mathcal{B}(t)| & \lesssim \frac{1}{N^{0+}} \sum_{n, n_1, n_2} \frac{g(t,n_1) g(t,-n_2) g(t,n - n_1 - n_2) g(t,n)}{\jb{n-n_2} \jb{n-n_1} \jb{n}^{4(s-\frac14)}}  \\
		& \lesssim \frac{1}{N^{0+}} \Bigg( \sum_{n,n_1,n_2} \frac{g(t,n)^{p'} }{\jb{n-n_2}^{p'} \jb{n-n_1}^{p'} \jb{n}^{4(s-\frac14)p'}} \Bigg)^{\frac{1}{p'}} \|g(t)\|_{\l^p}^3 \\
		& \lesssim  \frac{1}{N^{0+}} \|u(t)\|^4_{\FL^{s,p}},
	\end{align*}
	for $1\leq p <\infty$ and $s>\max\big(\frac12-\frac{1}{2p}, \frac14\big)$.
	
	\smallskip
	\noi\textbf{\underline{Case 6: $|n_3|^\frac12 \gg |n_{\med}| \ges |n_{\min}|$ }}\\
	Assume that $n_{\med}=n_2, n_{\min}=n_1$, as the estimate is analogous otherwise. Since $|\Phi(\nbar)| \sim |n_3|^2 |n_1 + n_2|$, we control the multiplier as follows
	\begin{align*}
		\frac{|nn_3| }{|\Phi(\nbar)|} \lesssim \frac{1}{\jb{n_1+n_2}}.
	\end{align*}
	Using H\"older's inequality, we have
	\begin{align*}
		|\mathcal{B}(t)| & \lesssim \frac{1}{N^{0+}}  \Bigg(\sum_{n_1,n_2,n_3} \frac{g(t,n_1)^{p'}}{\jb{n_1 + n_2}^{p'} \jb{n_1}^{2sp'} \jb{n_3}^{sp'} \jb{n_1+n_2+n_3}^{sp'-}}  \Bigg)^{\frac{1}{p'}} \|g(t)\|^3_{\l^p_n}\\
		&  \lesssim \frac{1}{N^{0+}}   \Bigg(\sum_{n_1} \frac{g(t,n_1)^{p'}}{ \jb{n_1}^{2sp'} } \Bigg)^{\frac{1}{p'}} \|g(t)\|^3_{\l^p_n}\\
		&\lesssim \frac{1}{N^{0+}} \|u(t)\|_{\FL^{s,p}}^4 ,
	\end{align*}
	for $1\leq p \leq2$, $s>0$ or $2<p<\infty$, $s>\frac12-\frac{1}{2p}$.
	
	\medskip
	\noi$\bullet$ \textbf{Estimate for $\mathcal{R}_j$, $j=0,1,2,3$}\\
	We will focus on estimating $\mathcal{R}_0$. The estimate for the remaining contributions follows by a similar approach. Let the following notation denote the modulations of the 6 factors
	\begin{align*}
		\s_j &= \tau_j-n_j^3 , \ j=1,2,3, \\
		\quad \s_4 &= \tau_4 + n^3, \quad \s_5  = \tau_5 - n^3, \quad
		\s_6  = \tau_6 + n^3,
	\end{align*}
	which implies that $ |\Phi(\nbar)| = |\s_1 + \ldots + \s_6| \lesssim \max_{j=1,\ldots,6} |\s_j|$. 
	Assume that $|\s_1|$ is the largest modulation. Then, we can associate the time cut-off with the second factor. If another $|\s_j|$ is the largest modulation, we can associate the cut-off with the first factor and the estimate follows analogously. Note that we can rewrite $\mathcal{R}_0$ as follows
	\begin{align*}
		\mathcal{R}_0  &= \Im \sum_{|n|>N} \sum_{ \nbar\in\Lambda(n)} \frac{n^2 n_3}{\Phi(\nbar)}  \Ft_t\bigg( \ft{u}(n_1) (\chi_{[0,t]} \conj{ \ft{u}})(-n_2)  \ft{u}(n_3) \conj{\ft{u}}(n) \ft{u}(n) \conj{\ft{u}}( n) \bigg)(0)\\
		&=\Im\intt_{\tau_1 + \ldots + \tau_6 = 0} \sum_{|n|>N} \sum_{ \nbar\in\Lambda(n)} \frac{n^2 n_3}{\Phi(\nbar)} \ft{u}(\tau_1,n_1)\conj{ \Ft\big(\chi_{[0,t]} u\big)}(-\tau_2,-n_2)\\  
		&\qquad \qquad \qquad \qquad \qquad \qquad \times \ft{u}(\tau_3,n_3) \conj{\ft{u}}(-\tau_4,n) \ft{u}(\tau_5,n) \conj{\ft{u}}(-\tau_6,n) \ d \tau_1 \cdots d\tau_5.
	\end{align*}
	Using the following notation 
	\begin{align*}
		g_1(\tau,n)&= \jb{n}^s \jb{\tau - n^3}^{\frac12} |\ft{u}(\tau,n)| ,\\
		g_2(\tau,n)&= \jb{n}^s \jb{\tau - n^3}^{\frac12-} \big| \Ft\big(\chi_{[0,t]} u\big)(\tau, n) \big|,
	\end{align*}
	apply Cauchy-Schwarz inequality to obtain the following estimate
	\begin{multline*}
		|\mathcal{R}_0|  \lesssim \frac{1}{N^{0+}} \sum_{|n|>N} \sum_{\nbar\in\Lambda(n)} \frac{|n|^{2+} |n_3 |}{|\Phi(\nbar)|^\frac32 (\jb{n_1} \jb{n_2} \jb{n_3})^s \jb{n}^{3s}}  \|g_1(-n_2)\|_{L^2_\tau}  \\ \times  \|g_1(n_3)\|_{L^2_\tau} \|g_1(n)\|_{L^2_\tau}^3  \Bigg( \int \frac{|g_2(\tau_1, n_1)|^2 }{\jb{\s_2}^{1-} \jb{\s_3}\cdots \jb{\s_6}} \ d\tau_1 \ldots d\tau_5 \Bigg)^\frac12  .
	\end{multline*}
	By applying Lemma~\ref{lemma:convolution} we estimate the last factor on the right-hand side by $\|g_1 (n_1)\|_{L^2_\tau}$ and the problem reduces to showing
	\begin{multline}
		\sum_{|n|>N} \sum_{\nbar\in\Lambda(n)} \frac{|n|^{2+} |n_3 |}{|\Phi(\nbar)|^\frac32 (\jb{n_1} \jb{n_2} \jb{n_3})^s \jb{n}^{3s}}  \\
		\times \|g_2(n_1)\|_{L^2_\tau} \|g_1(-n_2)\|_{L^2_\tau} \|g_1(n_3)\|_{L^2_\tau} \|g_1(n)\|_{L^2_\tau}^3 
		\lesssim \|g_1\|_{\l^p_n L^2_\tau}^5 \|g_2\|_{\l^p_n L^2_\tau},
		\label{resonant0}
	\end{multline}
	since $\|g_1\|_{\l^p_n L^2_\tau} \lesssim \|u\|_{X^{s,\frac12}_{p,2}}$ and $\|g_2\|_{\l^p_n L^2_\tau} = \|u\|_{X^{s,\frac12-}_{p,2}}$, from Lemma~\ref{lm:cutoff}.

	\smallskip
	\noi\textbf{\underline{Case 5: $|n_1| \sim |n_2| \sim |n_3|$}}\\
	Since $|\Phi(\nbar)|\ges |n_3| \lambda_1 \lambda_2$, for $\ld_j=|n-n'_j|$, $j=1,2$, and $n'_1, n'_2 \in \{n_1,n_2,n_3\}$ distinct, we have the following
	\begin{align*}
		\frac{|n|^{2+} |n_3|}{|\Phi(\nbar)|^\frac32 ( \jb{n_1} \jb{n_2} \jb{n_3})^{\frac14+} \jb{n}^{\frac34+} } \lesssim \frac{1}{\jb{n-n'_1}^\frac32 \jb{n-n'_2}^\frac32}.
	\end{align*}
	Then, since $|n| \lesssim |n_j|$, $j=1,2,3$, using Holder's inequality gives 
	\begin{align*}
		\text{LHS of }\eqref{resonant0} & \lesssim  \Bigg( \sum_{n,n_1',n_2'} \frac{\|g_1(n)\|_{L^2_\tau}^{3p'}}{\jb{n-n'_1}^{1+}\jb{n-n_2'}^{1+} \jb{n}^{6(s-\frac14)p'}}  \Bigg)^\frac{1}{p'} \|g_1\|_{\l^p_n L^2_\tau}^2 \|g_2\|_{\l^p_n L^2_\tau}\\
		& \lesssim \Bigg( \sum_n \frac{\|g_1(n)\|_{L^2_\tau}^{3p'}}{\jb{n}^{6(s-\frac14)p'}} \Bigg)^\frac{1}{p'} \|g_1\|_{\l^p_n L^2_\tau}^2 \|g_2\|_{\l^p_n L^2_\tau}^2 
	\end{align*}
	where the last inequality follows if $1\leq p <\infty$ and $s>\max\big(\frac{5}{12} - \frac{2}{3p},\frac14\big)$.
	
	\smallskip
	\noi\textbf{\underline{Case 6: $|n_3|^\frac12 \gg |n_1|, |n_2|$}}\\
	Since $|\Phi(\nbar)| \sim |n_3|^2 |n_1+n_2|$ and $|n_3|\sim|n| \gg |n_1|, |n_2|$, we have
	\begin{align*}
		\frac{|n|^{2+} |n_3|}{|\Phi(\nbar)|^{\frac32}} \lesssim \frac{|n|^{0+}}{\jb{n_1+n_2}^\frac32}.
	\end{align*}
	Using Holder's inequality, it follows that 
	\begin{align*}
		\text{LHS of } \eqref{resonant0}& \lesssim  \Bigg( \sum_{n_1,n_2,n} \frac{\|g_1(n)\|_{L^2_\tau}^{3p'} }{\jb{n_1+n_2}^{1+} \jb{n_1}^{sp'} \jb{n_2}^{sp'}  \jb{n}^{4sp'-}} \Bigg)^\frac{1}{p'} \|g_1\|_{\l^p_n L^2_\tau}^2 \|g_2\|_{\l^p_n L^2_\tau}  \\
		& \lesssim \Bigg( \sum_{n_1} \frac{1}{\jb{n_1}^{2sp'-}} \sum_n \frac{\|g_1(n)\|_{L^2_\tau}^{3p'} }{\jb{n}^{4sp'-}  } \Bigg)^\frac{1}{p'} \|g_1\|_{\l^p_n L^2_\tau}^2 \|g_2\|_{\l^p_n L^2_\tau}
	\end{align*}
	and the estimate follows if $1\leq p <\infty$ and $s>\frac12 - \frac{1}{2p}$.

	\smallskip
	\noi$\bullet$ \textbf{Estimate for $\mathcal{NR}_0$, $\mathcal{NR}_3$}\\
	We will omit the estimate for $\mathcal{NR}_3$ and focus on $\mathcal{NR}_0$. Let the following denote the modulations of the 6 factors
	\begin{align*}
		\s_j &= \tau_j-n_j^3 , \ j=1,2,3, \\
		\quad \s_4 &= \tau_4 - m_1^3 \quad \s_5  = \tau_5 - m_2^3, \quad
		\s_6  = \tau_6 -m_3^3,
	\end{align*}
	which implies that $\s_1 + \s_2 + \s_3 + \s_4 + \s_5 + \s_6 = \Phi(\nbar) + \Phi(\mbar)$. Thus, we will consider two regions:
	\begin{align}
		|\Phi(\mbar)| &\lesssim |\Phi(\nbar) + \Phi(\mbar)|, \label{resonant_good}\\
		|\Phi(\mbar)| &\gg |\Phi(\nbar) + \Phi(\mbar)|. \label{resonant_bad}
	\end{align} If \eqref{resonant_good} holds, we can use the largest modulation to gain a power of $|\Phi(\mbar)|^\frac12$. For \eqref{resonant_bad}, we have no gain from the largest modulation so we will use Strichartz estimates and the fact that $|\Phi(\nbar)| \sim |\Phi(\mbar)|$.
	Note that we can rewrite $\mathcal{NR}_0$ as follows
	\begin{multline*}
		\mathcal{NR}_0  
		=\Im\intt_{\tau_1 + \ldots + \tau_6 = 0} \sum_{|n|>N} \sum_{ \nbar\in\Lambda(n)} \sum_{\mbar\in\Lambda(-n)}\frac{n n_3 m_3}{\Phi(\nbar)} \ft{u}(\tau_1,n_1)\conj{ \Ft\big(\chi_{[0,t]} u\big)}(-\tau_2,-n_2) \\\times \ft{u}(\tau_3,n_3) 
		\conj{\ft{u}}(-\tau_4,-m_1) \ft{u}(\tau_5,m_2) \conj{\ft{u}}(-\tau_6,-m_3) \ d \tau_1 \cdots d\tau_5.
	\end{multline*}
	
	Consider the case \eqref{resonant_good} and proceed as in the estimate for $\mathcal{R}_0$. Assuming that we can associate the time cut-off with the first factor, we have
	\begin{multline}
		|\mathcal{NR}_0|  \lesssim \frac{1}{N^{0+}} \sum_{|n|>N} \sum_{\nbar\in\Lambda(n)} \sum_{\mbar\in\Lambda(-n)} \frac{|n|^{1+} |n_3m_3|}{|\Phi(\nbar)| | \Phi(\mbar)|^\frac12 \prod_{j=1}^3 \jb{n_j}^s \jb{m_j}^s} 
		\|g_2(n_1)\|_{L^2_\tau} \\ \times\|g_1(-n_2) \|_{L^2_\tau} \|g_1(n_3)\|_{L^2_\tau} \|g_1(-m_1)\|_{L^2_\tau} \|g_1(m_2)\|_{L^2_\tau} \|g_1(-m_3)\|_{L^2_\tau}.\label{nr0}
	\end{multline}
	For simplicity, we can apply Lemma~\ref{lm:cutoff} to obtain $\|g_2\|_{L^2_\tau} \lesssim \|g_1\|_{L^2_\tau}$.
	In order to control the multiplier in \eqref{nr0}, we must take into account the value of $\Phi(\mbar)$ and the relation between the frequencies of the first generation $n_1,n_2,n_3$.
	
	\smallskip
	\noi\textbf{\underline{Case 5 and \eqref{resonant_good}: $|n_1| \sim |n_2| \sim |n_3|$}}\\
	If $|m_1|\sim|m_2|\sim|m_3|$ and $|\Phi(\mbar)| \ges |m_3| |n+m'_1| |n+m'_2|$, for some distinct $m'_1,m'_2\in\{m_1,m_2,m_3\}$, we have
	\begin{align*}
		\frac{|n|^{1+} |n_3m_3|}{|\Phi(\nbar)| |\Phi(\mbar)|^\frac12} \lesssim \frac{|n_1n_2n_3|^{\frac13+} |m_1m_2|^{\frac14+}}{\jb{n-n'_1} \jb{n-n'_2} \jb{n+m'_1}^{\frac12+} \jb{n+m'_2}^{\frac12+}},
	\end{align*}
	for some distinct $n'_1,n'_2\in\{n_1,n_2,n_3\}$.
	Using H\"older's inequality, we get
	\begin{align*}
		\eqref{nr0}& \lesssim 
		\Bigg\| \sum_{\nbar\in\Lambda(n)} \frac{\|g_1(n_1)\|_{L^2_\tau} \|g_1(-n_2)\|_{L^2_\tau} \|g_1(n_3) \|_{L^2_\tau} }{\jb{n-n'_1} \jb{n-n'_2} (\jb{n_1} \jb{n_2} \jb{n_3})^{s-\frac13-}} \Bigg\|_{\l^2_n} \\
		& \qquad \times \Bigg\| \sum_{\mbar\in\Lambda(-n)} \frac{\|g_1(-m_1) \|_{L^2_\tau} \|g_1(m_2)\|_{L^2_\tau} \|g_1(-m_3)\|_{L^2_\tau}}{ \jb{n+m'_1}^{\frac12+} \jb{n+m'_2}^{\frac12+} (\jb{m_1} \jb{m_2} \jb{m_3})^{s-\frac13-}}  \Bigg\|_{\l^2_n} \\
		& \lesssim \!\sup_n \Bigg(\sum_{\substack{n'_1, n'_2, \\m'_1, m'_2}} \frac{1}{\jb{n-n'_1}^{1+} \jb{n-n'_2}^{1+}\jb{n+m'_1}^{1+} \jb{n+m'_2}^{1+}} \Bigg)^\frac12
		\Bigg\|\frac{g_1}{\jb{n}^{s-\frac13-}}\Bigg\|^6_{\l^2_n L^2_\tau} \\
		& \lesssim \|g_1\|^6_{\l^p_n L^2_\tau} = \|u\|^6_{X^{s,\frac12}_{p,2}},
	\end{align*}
	for $1\leq p <\infty$ and $s>\max\big( \frac13,\frac56-\frac1p\big)$.
	In the remaining regions of frequency space for $m_1,m_2,m_3$, we have $|\Phi(\mbar)| \ges |m_{\max}|^2 \ld'$, for $\ld' \in \{|m_{\max} + m_{\med}|, |m_{\med} + m_{\min}| \}$. Thus,
	\begin{align*}
		\frac{|n|^{1+} |n_3m_3|}{|\Phi(\nbar)| |\Phi(\mbar)|^{\frac12}} \lesssim \frac{|n_1n_2n_3|^{\frac13+}}{\jb{n-n'_1} \jb{n-n'_2} \jb{\ld'}^{\frac12}}.
	\end{align*}
	Since $(\jb{m_{\max}}\jb{m_{\med}})^{-\frac13+} \lesssim \jb{m_{\med}}^{-\frac23-}$, we can proceed as in the previous case, with $\jb{\ld'}^{\frac12+} \jb{m_{\med}}^{\frac23+}$ instead of $\jb{n+m'_1}^{\frac12+} \jb{n+m'_2}^{\frac12+}$.
	
	\smallskip
	
	\noi\textbf{\underline{Case 6 and \eqref{resonant_good}: $|n_{\max}|^2 \lesssim |\Phi(\nbar)|$ }}\\
	Since we have
	\begin{align*}
		\frac{|n|^{1+} |n_3|}{|\Phi(\nbar)| } \lesssim \frac{|n_1n_2|^{\frac14+} }{\jb{n_1+n_2} \jb{n_{\min}}^{\frac12+} },
	\end{align*}
	we can follow the same argument in the previous case, substituting $\jb{n-n'_1} \jb{n-n'_2}$ by $\jb{n_1+n_2} \jb{n_{\min}}^{\frac12+}$.
	
	\medskip
	
	Now, we must consider \eqref{resonant_bad}. Since we have $|\Phi(\nbar)| \sim |\Phi(\mbar)|$, we focus on estimating the following multiplier
	\begin{equation}
		\frac{|n|^{1+}|n_3m_3|}{|\Phi(\nbar)|^\al |\Phi(\mbar)|^{1-\al}}, \label{nr0_multiplier}
	\end{equation}
	for some $0\leq \al \leq1$.
	
	\smallskip
	
	\noi\textbf{\underline{Case 5 and \eqref{resonant_bad}: $|n_1|\sim|n_2|\sim|n_3|$ }}\\
	Choosing $\al=0$, we have
	\begin{align*}\eqref{nr0_multiplier} \lesssim 
		\begin{cases}
			|n_1n_2n_3m_1m_2m_3|^{\frac13+}, & \text{ if } \quad |m_1|\sim|m_2|\sim|m_3| \\
			|n_1n_2n_3|^{\frac13+}, & \text{ if } \quad |\Phi(\mbar)| \ges |m_{\max}|^2
		\end{cases}.
	\end{align*}
	Let $\ft{h}_1(\tau,n) = \jb{n}^{\frac13+} \Ft_{t,x}\big(\chi_{[0,t]} u\big)(\tau,n)$, ${\ft{h}_2(\tau,n) = \jb{n}^{\frac13+} |\ft{u}(\tau,n)|}$ and note that we can associate the cut-off with any factor. Using H\"older's inequality, the Strichartz estimate \eqref{L6new} and Lemma~\ref{lm:cutoff}, we get
	\begin{align*}
		|\mathcal{NR}_0| &\lesssim  \frac{1}{N^{0+}} \| h_1 h_2^5\|_{L^1_{t,x}} \lesssim \frac{1}{N^{0+}} \|h_1\|_{L^6_{t,x}} \|h_2\|_{L^6_{t,x}}^5 
		\lesssim  \frac{1}{N^{0+}} \|u\|_{X^{s,\frac12}_{p,2}}^6,
	\end{align*}
	for $1\leq p <\infty$ and $s>\max\big(\frac13, \frac56 - \frac1p\big)$.

	\smallskip
	\noi\textbf{\underline{Case 6 and \eqref{resonant_bad}: $|n_3|^\frac12\gg |n_1|, |n_2|$}}\\
	If $|m_1|\sim|m_2|\sim|m_3|$, choosing $\al =1$, gives $\eqref{nr0_multiplier} \lesssim |m_1m_2m_3|^{\frac13+}$ and the result follows from the previous case. Now, assume that $|\Phi(\mbar)|\sim|m_{\max}|^2 \ld'$ where $\ld' \in\{|m_{\max} + m_{\med}|, |m_{\med} + m_{\min}|\}$. We must consider a finer case separation for the second generation of frequencies. For $\al=0$, we can estimate the multiplier as follows
	\begin{equation*}
		\eqref{nr0_multiplier} \lesssim
		\begin{cases}
			|n_3m_3 \max(|m_1|, |m_2|)|^{\frac13+}, & \text{if} \quad |m_3| \lesssim \max(|m_1|, |m_2|) \\
			|n_3m_1m_2m_3|^{\frac13+}, & \text{if} \quad |m_3|^\frac12 \lesssim |m_{\min}| \leq |m_{\med}| \ll |m_3|\\
			|n_3m_3|^{\frac14+} , & \text{if} \quad |m_{\min}| \ll |m_3|^\frac12 \lesssim |m_{\med}| \ll |m_3|
		\end{cases}
	\end{equation*}
	and use the strategy in the previous case.
	
	It only remains to consider the case when $|m_3|^\frac12 \gg |m_1|, |m_2|$.
	Consider the following decomposition
	\begin{align*}
		\mathcal{NR}_0  & = \Im \int_0^t \sum_{|n|>N} \sum_{\substack{\nbar\in\Lambda(n)}}\frac{m_3}{n_1 + n_2} \sum_{\substack{\mbar\in\Lambda(-n)}}  \Bigg( \frac{nn_3}{(n_1+n_3)(n_2+n_3)} - 1 \Bigg) \\
		& \qquad \qquad \times  \ft{u}(n_1) \conj{\ft{u}}(-n_2) \ft{u}(n_3) \conj{\ft{u}}(-m_1) \ft{u}(m_2) \conj{\ft{u}}(-m_3) \, dt'  \\
		& \phantom{X}+ \Im \int_0^t \sum_{|n|>N} \sum_{\substack{\nbar\in\Lambda(n)}} \sum_{\substack{\mbar\in\Lambda(-n)}}  \frac{m_3}{n_1 + n_2}\ft{u}(n_1) \\
		& \qquad \qquad \times \conj{\ft{u}}(-n_2) \ft{u}(n_3) \conj{\ft{u}}(-m_1) \ft{u}(m_2) \conj{\ft{u}}(-m_3) \, dt' \\
		&=: \I_0 + \II_0.
	\end{align*}
	
	In order to estimate $\I_0$, note that $$nn_3 - (n_1+n_3)(n_2+n_3) = n_3^2 + (n_1+n_2)n_3 - n_1 n_2 - (n_1 + n_2)n_3 - n_3^2 = -n_1 n_2,$$ which implies that
	\begin{align*}
		\Bigg|\frac{nn_3}{(n_1+n_3)(n_2+n_3)} - 1 \Bigg| = \frac{|n_1 n_2|}{|(n_1+n_3)(n_2+n_3)|} \lesssim \frac{|n_3|}{|n_3|^2} \lesssim \frac{1}{|n_3|}.\end{align*}
	Hence, using H\"older's inequality and $L^6$-Strichartz estimates \eqref{L6new}, we have
	\begin{align*}
		|\I_0| \lesssim \frac{1}{N^{0+}} \| \chi_{[0,t]} u^6 \|_{L^1_{t,x}} 
		\lesssim \frac{1}{N^{0+}} \|\chi_{[0,t]} u \|_{X^{0+,\frac12-}_{2,2}} \|u\|^5_{X^{0+,\frac12-}_{2,2}}
		\lesssim \frac{1}{N^{0+}}  \|u\|^6_{X^{s,\frac12}_{p,2}}
	\end{align*}
	for $1\leq p <\infty$ and $s>\max\big(\frac12 - \frac1p,0)$.
	
	Now, we focus on estimating $\II_0$. First, assume that $n_3 + m_3 \neq 0$. Then, 
	\begin{multline*}
		|\Phi(\nbar) + \Phi(\mbar)| = |3(n_3+m_3)(n_1+n_3)(n_1+m_3) \\+ 3(n_2 + m_1)(n_2+m_2)(m_1+m_2)|
		\ges |n_3|^2,
	\end{multline*}
	since $|(n_3+m_3)(n_1+n_3)(n_1+m_3)| \ges |n_3|^2$ and $|(n_2 + m_1)(n_2+m_2)(m_1+m_2)| \ll |n_3|^\frac32$. Then, using the largest modulation, we have
	\begin{align*}
		\frac{|m_3|}{|\Phi(\nbar) + \Phi(\mbar)|^\frac12} \lesssim 1.
	\end{align*}
	Proceeding as in \eqref{nr0}, we first focus on estimating $\II_0$ with respect to time
	\begin{multline}
		|\II_0| \lesssim \frac{1}{N^{0+}} \sum_{|n|>N} \sum_{\nbar\in\Lambda(n)} \sum_{\mbar\in\Lambda(-n)} \frac{1 }{\jb{n_1+n_2}\prod_{j=1}^3(\jb{n_j} \jb{m_j})^{s-}} \|g_1(n_1)\|_{L^2_\tau}  \\ \times \|g_1(-n_2)\|_{L^2_\tau} \|g_1(n_3)\|_{L^2_\tau}  \|g_1(-m_1)\|_{L^2_\tau} \|g_1(m_2)\|_{L^2_\tau} \|g_1(-m_3)\|_{L^2_\tau}. \label{II0}
	\end{multline}
	The estimate follows from the approach in Case 5 and \eqref{resonant_good}, since
	\begin{align*}
		\frac{1}{\jb{n_1+n_2} \prod_j^3 (\jb{n_j} \jb{m_j})^{\frac13+}} \lesssim \frac{1}{\jb{n_1+n_2} \jb{n_{\min}}^{\frac12+} \jb{m_{\min}}^{\frac12+} \jb{m_{\med}}^{\frac12+} }.
	\end{align*}
	On the other hand, if $n_3 + m_3 = 0$, focus on the following quantity 
	\begin{multline*}
		\II_0 = \int_0^t \sum_{|n|>N} \sum_{\substack{\nbar\in\Lambda(n)\\|n_1|,|n_2|\ll |n_3|^\frac12}} \sum_{\substack{\mbar\in\Lambda(-n),\\|m_1|,|m_2|\ll |n_3|^\frac12}} \frac{-n_3}{n_1 + n_2}\\
		\times  \ft{u}(n_1) \conj{\ft{u}}(-n_2) \conj{\ft{u}}(-m_1) \ft{u}(-n_1-n_2-m_1) |\ft{u}(n_3)|^2 \, dt'.
	\end{multline*}
	In order to estimate this quantity we need further assumptions on the frequencies. Let $\eps>0$ denote the constant such that $|n_1|, |n_2|, |m_1|, |m_2| \leq \eps |n_3|^\frac12$. We will consider two distinct cases: (i) $|n_1 + n_2| > \eps^2 |n_3|^\frac12$; (ii) $|n_1 + n_2| \leq \eps^2 | n_3|^\frac12$. 
	
	If $|n_1 + n_2| > \eps^2 |n_3|^\frac12$, then
	\begin{align*}
		\frac{|n_3|}{|n_1+n_2|\jb{n_3}^{\frac12+}} \lesssim \frac{1}{N^{0+}}.
	\end{align*}
	For simplicity, assume that $|n_1| \leq |n_2|$ and $|m_1|\leq |m_2|$. Consequently, following a similar approach to \eqref{II0} to handle the time integral, with $h(\tau,n) = \jb{n}^{\frac13+} \jb{\tau-n^3}^{\frac12-} |\ft{u}(\tau,n)|$, and using H\"older's inequality, we obtain
	\begin{align*}
		|\mathcal\II_0| & \lesssim \frac{1}{N^{0+}}  \|h\|^2_{\l^2_n L^2_\tau} \\
		& \quad \times \sum_{\substack{n_1, n_2, m_1}} \frac{ \|h(n_1)\|_{L^2_\tau} \|h(-n_2)\|_{L^2_\tau} \|h(-m_1)\|_{L^2_\tau} \|h(-n_1-n_2-m_1)\|_{L^2_\tau}
		}{N^{0+}(\jb{n_1} \jb{n_2}\jb{m_1}\jb{n_1+n_2+m_1})^{\frac13+} (\jb{n_1}\jb{m_1})^{\frac16+} } \nonumber\\
		& \lesssim \frac{1}{N^{0+}} \Bigg( \sum_{n_1,n_2,m_1} \frac{\|h(-n_2)\|^2_{L^2_\tau}}{ \jb{n_1}^{1+} \jb{m_1}^{1+}} \Bigg)^\frac12 \|u\|^5_{X^{s,\frac12}_{p,2}} \nonumber\\
		& \lesssim  \frac{1}{N^{0+}}\|u\|^6_{X^{s,\frac12}_{p,2}}, 
	\end{align*}
	for $1\leq p <\infty$ and $s>\max\big(\frac13, \frac56-\frac1p\big)$.
	
	It remains to estimate the case when $|n_1+n_2| \leq \eps^2|n_3|^\frac12$. Under this assumption and $|n_j|\leq \eps |n_3|^\frac12$, $j=1,2$, it follows that  $|n_j| \leq \eps |n_3|^\frac12 - |n_1+n_2|$ or $\eps|n_3|^\frac12 - |n_1+n_2| < |n_j| < \eps |n_3|^\frac12$, $j=1,2$. For simplicity, let $|n_1| \leq |n_2|$ and $|m_1| \leq |n_1+n_2+m_2|$, as the result follows from an analogous approach for the remaining cases.
	We consider the following two regions of summation
	\begin{align*}
		H_1 &:= \big\{ (n_1,n_2,m_1): \ |n_1|, |m_1| < \eps |n_3|^\frac12 - |n_1+n_2| , \\
		&\qquad \qquad \qquad  |n_2|, |n_1+n_2+m_1| < \eps|n_3|^\frac12, \ |n_1+n_2| < \eps^2 |n_3|^\frac12 \big\}, \\
		H_2 &:= \big\{ (n_1,n_2,m_1): \ |n_1|, |n_2|, |m_1|, |n_1+n_2+m_1|\leq \eps |n_3|^\frac12, \\
		& \qquad\qquad\qquad |n_1| \text{ or } |m_1| \geq \eps|n_3|^\frac12 - |n_1+n_2|,  \ |n_1+ n_2| < \eps^2 |n_3|^\frac12  \big\}.
	\end{align*}
	We first consider the contribution restricted to the region $H_2$, when $|n_1| \geq \eps |n_3|^\frac12 - |n_1+n_2|$. Note that the following holds
	\begin{align*}
		|n_1| \geq \eps |n_3|^\frac12 - |n_1+n_2| \geq (\eps - \eps^2) |n_3|^\frac12. 
	\end{align*}
	Therefore, the multiplier can be controlled as follows
	\begin{align*}
		\frac{|n_3|}{|n_1+n_2| \jb{n_1}^{\frac13+} \jb{n_2}^{\frac13+} \jb{n_3}^{\frac23+}} \lesssim \frac{1}{N^{0+}|n_1+n_2|^{1+}} .
	\end{align*}
	The estimate follows the same approach as \eqref{II0}, for $s>\max\big(\frac13,\frac56-\frac1p\big)$, $1\leq p <\infty$.
	
	Now, consider the contribution localized on the region $H_1$, with the change of variables $n'_2 = n_1 + n_2$, 
	\begin{multline*}
		\int_0^t\sum_{\substack{ |n|>N, \\  |n'_2|<\eps^2|n-n'_2|^\frac12}}  \frac{n-n'_2}{n'_2} |\ft{u}(n-n'_2)|^2 \\
		\times  \Bigg( \Im \sum_{\substack{|n_1|, |m_1| \\<\eps |n-n'_2|^\frac12 - |n'_2|} }  \ft{u}(n_1) \conj{\ft{u}}(n_1-n'_2) \conj{\ft{u}}(-m_1) \ft{u}(-n'_2-m_1) \Bigg) \ dt'.
	\end{multline*}
	Use $J$ to denote the two inner sums. We can decompose $J$ as follows
	\begin{align*}
		J & = \Im \bigg(\sum_{0<n_1,m_1<\eps|n-n'_2|^\frac12 - |n'_2|} \ft{u}(n_1) \conj{\ft{u}}(n_1-n'_2) \conj{\ft{u}}(-m_1) \ft{u}(-n'_2-m_1) \\
		& \phantom{xxxxxx}+ \sum_{0<n_1,m_1<\eps|n-n'_2|^\frac12 - |n'_2|} \ft{u}(-n_1) \conj{\ft{u}}(-n_1-n'_2) \conj{\ft{u}}(-m_1) \ft{u}(-n'_2-m_1) \\
		& \phantom{xxxxxx}+ \sum_{0<n_1,m_1<\eps|n-n'_2|^\frac12 - |n'_2|} \ft{u}(n_1) \conj{\ft{u}}(n_1-n'_2) \conj{\ft{u}}(m_1) \ft{u}(-n'_2+m_1) \\
		&\phantom{xxxxxx}+\sum_{0<n_1,m_1<\eps|n-n'_2|^\frac12 - |n'_2|} \ft{u}(-n_1) \conj{\ft{u}}(-n_1-n'_2) \conj{\ft{u}}(m_1) \ft{u}(-n'_2+m_1) \\
		& \phantom{xxxxxx}+\sum_{0<|n_1|<\eps|n-n'_2|^\frac12 - |n'_2|} \ft{u}(0) \conj{\ft{u}}(-n'_2) \conj{\ft{u}}(-n_1) \ft{u}(-n'_2-n_1) \\
		& \phantom{xxxxxx}+  \sum_{0<|n_1|<\eps|n-n'_2|^\frac12 - |n'_2|} \ft{u}(-n_1) \conj{\ft{u}}(-n_1-n'_2) \conj{\ft{u}}(0) \ft{u}(-n'_2) \\
		& \phantom{xxxxxx}+  \ft{u}(0) \conj{\ft{u}}(-n'_2) \conj{\ft{u}}(0) \ft{u}(-n'_2) \bigg)\\
		& = \Im \sum_{\substack{0<|n_1|,|m_1|<\eps|n-n'_2|^\frac12 - |n'_2|\\n_1m_1>0}} \ft{u}(n_1) \conj{\ft{u}}(n_1-n'_2) \conj{\ft{u}}(m_1) \ft{u}(-n'_2+m_1) \\
		& = \Im \bigg( \frac12 \sum_{\substack{0<|n_1|,|m_1|<\eps|n-n'_2|^\frac12 - |n'_2|\\n_1m_1>0, n_1\neq m_1}} \ft{u}(n_1) \conj{\ft{u}}(n_1-n'_2) \conj{\ft{u}}(m_1) \ft{u}(-n'_2+m_1) \\
		& \phantom{xxxxxx}+  \frac12 \sum_{\substack{0<|n_1|,|m_1|<\eps|n-n'_2|^\frac12 - |n'_2|\\n_1m_1>0, n_1\neq m_1}} \ft{u}(m_1) \conj{\ft{u}}(m_1-n'_2) \conj{\ft{u}}(n_1) \ft{u}(-n'_2+n_1) \\
		& \phantom{xxxxxx}- \sum_{\substack{0<|n_1|<\eps|n-n'_2|^\frac12 - |n'_2|}} \ft{u}(n_1) \conj{\ft{u}}(n_1-n'_2) \conj{\ft{u}}(n_1) \ft{u}(-n'_2+n_1) \bigg) =0 .
	\end{align*}
	This completes the estimate for the contribution $\mathcal{NR}_0$.
	
	\medskip
	\noi$\bullet$ \textbf{Estimate for $\mathcal{NR}_1, \mathcal{NR}_2$}\\
	In order to control the contributions $\mathcal{NR}_1, \mathcal{NR}_2$, we will follow a similar approach to that of $\mathcal{NR}_0$. Most cases follow an analogous approach, but the estimate is significantly different in Case 6, when $|\Phi(\mbar)| \ges |m_{\max}|^2$ and \eqref{resonant_bad} hold.
	
	In this case, we cannot use the maximum modulation to help estimate the multiplier. However, we can use the fact that $|\Phi(\nbar)| \sim |\Phi(\mbar)|$ to obtain the following
	\begin{align}
		\frac{|nn_3m_3|}{|\Phi(\nbar)|^\al |\Phi(\mbar)|^{1-\al}} \lesssim \frac{|n|^{1+} |n_3m_3|}{N^{0+}|n_3|^{2\al} |m_{\max}|^{2(1-\al)}},\label{sep_aux2}
	\end{align}
	for some $0\leq \al \leq 1$.
	Estimating this multiplier requires more care than for the $\mathcal{NR}_0$ contribution since we cannot directly compare the sizes of $|n|, |n_3|$ and $|m_{\max}|$. We can estimate the multiplier as follows
	\begin{align*}
		\eqref{sep_aux2} \lesssim
		\begin{cases}
			|n_3m_1m_2m_3|^{\frac14+}, & \text{if} \quad |m_3| \lesssim |m_1|, |m_2| \text{ and } \al = \frac78,\\
			|nn_3m_3m_{\max}|^{\frac14+}, & \text{if} \quad |m_{\min}| \ll |m_3| \lesssim |m_3| \text{ and } \al=\frac34 
		\end{cases}
	\end{align*}
	and following the previous arguments, using H\"older's inequality and the $L^6$-Strichartz estimate \eqref{L6new}. If $|m_3| \gg |m_1|, |m_2|$, then $|\Phi(\nbar)| \gg |\Phi(\mbar)|$ and $|\Phi(\nbar) + \Phi(\mbar)| \sim |\Phi(\nbar)|$, which contradicts our assumptions.

\end{proof}

\section{A priori estimate and global well-posedness} \label{sec:gwp}
In this section, we focus on showing the global well-posedness of the real-valued mKdV1 equation \eqref{renorm}. Note that the same argument can be used to extend solutions of mKdV2 \eqref{renorm2} globally-in-time.

The following result from \cite{OH1} is essential to extend local-in-time solutions to global ones.

\begin{proposition}\label{prop:apriori}
	Let $2\leq p<\infty$ and $0<s<1-\frac1p$. There exists $C=C(p)>0$ such that 
	\begin{align}
		\|u(t)\|_{\FL^{s,p}} \leq C (1 + \|u(0)\|_{\FL^{s,p}})^{\frac{p}{2}-1} \|u(0)\|_{\FL^{s,p}},		
		\label{apriori}
	\end{align}
	for any smooth solutions $u$ to the complex-valued mKdV1 equation \eqref{renorm}, for any $t\in\R$.
\end{proposition}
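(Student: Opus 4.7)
The plan is to exploit the complete integrability of mKdV following the Killip--Vi\c{s}an--Zhang strategy \cite{KVZ}, adapted to the Fourier--Lebesgue setting. Since $u$ is assumed smooth, no local-in-time issues arise and one only needs to propagate control of $\|u(t)\|_{\FL^{s,p}}$ through the flow. The starting point is the $2 \times 2$ AKNS/Zakharov--Shabat Lax representation of mKdV, which provides a one-parameter family of conserved functionals. Concretely, one would construct a generating functional $\alpha(\kappa;u)$ from the monodromy of the spectral problem, analytic for $\operatorname{Re} \kappa$ sufficiently large and invariant under the flow, whose logarithm admits an asymptotic expansion
\[
\log \alpha(\kappa; u) = \sum_{j \geq 1} \kappa^{-(2j-1)} E_j(u) + (\text{higher-order in }u),
\]
recovering the classical conservation laws $E_1 \sim \|u\|_{L^2}^2$, $E_2$ the momentum, $E_3$ the energy, and so forth.

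To obtain a priori bounds at fractional regularity, rather than summing integer-indexed $E_j$, I would integrate $\log \alpha$ against a tailored measure in $\kappa$ to produce a conserved functional $\mathcal{E}_{s,p}(u)$ indexed by the pair $(s,p)$. At the quadratic level in $u$, the spectral machinery gives an $\ell^2$-type expression roughly of the form $\sum_n (\kappa^2+n^2)^{-1}|\ft{u}(n)|^2$, so choosing the integration weight to concentrate at $\operatorname{Re}\kappa \sim N$ isolates a dyadic shell $|n| \sim N$, and summing the shells reproduces (at quadratic level) the $H^{s}$ norm. To pass from $\ell^2$ to $\ell^p$ in that sum, one then applies H\"older's inequality with exponents $p/2$ and $(p/2)'$ against a factor of $(\jb{n}^s|\ft{u}(n)|)^{p-2}$; this is where the multiplicative loss $\|u\|_{\FL^{s,p}}^{p-2}$, equivalently the exponent $\frac{p}{2}-1$ in \eqref{apriori}, is generated. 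The constraint $s < 1 - \frac{1}{p}$ is precisely the threshold ensuring the frequency sums arising in this inversion converge absolutely.

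The main obstacle will be controlling the higher-order (cubic and above) contributions in the expansion of $\log \alpha(\kappa;u)$, which must be shown to be subordinate to the quadratic part of $\mathcal{E}_{s,p}(u)$. In the Hilbert setting of \cite{KVZ} ($p=2$), these multilinear corrections are handled via Plancherel and Sobolev embeddings; for $p > 2$ one must instead bound tree-indexed multilinear expressions directly in $\FL^{s,p}$, by combining weighted H\"older inequalities with discrete convolution bounds analogous to Lemmas~\ref{lemma:convolution}--\ref{lemma:discrete_convolution}. Once these corrections are shown to be dominated by $\|u\|_{\FL^{s,p}}^{p-2+\delta}$ times the quadratic part of $\mathcal{E}_{s,p}$ for some $\delta > 0$, a standard bootstrap/continuity argument starting from $u \equiv 0$ converts the conservation identity $\mathcal{E}_{s,p}(u(t)) = \mathcal{E}_{s,p}(u(0))$ into the quantitative inequality \eqref{apriori}, with the constant $C$ depending only on $p$.
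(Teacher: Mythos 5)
First, a point of comparison: the paper does not prove Proposition~\ref{prop:apriori} at all. It is imported verbatim from Oh--Wang \cite{OH1}, whose proof extends the perturbation-determinant method of Killip--Vi\c{s}an--Zhang \cite{KVZ} to the Fourier--Lebesgue scale. Your outline targets exactly that strategy (conserved generating functional from the Lax/AKNS problem, expansion of its logarithm, quadratic-term analysis, control of higher-order corrections, bootstrap), so at the level of the plan you are aligned with the source the paper relies on. The problem is that what you have written is a program rather than a proof, and the one step you describe concretely --- the passage from $\ell^2$-based spectral information to the $\ell^p$-based norm --- is set up in a way that cannot work.

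You propose to sum the dyadic shells so that the quadratic part ``reproduces the $H^s$ norm,'' and then to pass to $\ell^p$ by H\"older against a factor $(\jb{n}^s|\ft{u}(n)|)^{p-2}$. As written this is either circular (note that $\sum_n \jb{n}^{2s}|\ft{u}(n)|^2(\jb{n}^s|\ft{u}(n)|)^{p-2}$ \emph{is} $\|u\|_{\FL^{s,p}}^p$, so the H\"older step with exponents $\tfrac p2$, $(\tfrac p2)'$ returns the quantity you started from), or else it forces the argument to pass through the global $H^s$ norm: an estimate of the form $\|u(t)\|_{\FL^{s,p}}^p \leq \|\jb{n}^s\ft{u}(t)\|_{\ell^\infty}^{p-2}\|u(t)\|_{H^s}^2$ is fine at time $t$, but the $H^s$-type quantity must then be bounded \emph{at time $0$} by $\|u_0\|_{\FL^{s,p}}$, which is impossible for $p>2$: the embedding $H^s(\T)\subset\FL^{s,p}(\T)$ is strict (take $\jb{n}^s\ft{u}_0(n)=|n|^{-\frac1p}(\log|n|)^{-2}$, which lies in $\ell^p$ but not $\ell^2$), so the resulting statement is vacuous for generic data in the proposition's range. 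Any correct implementation must keep the spectral parameter $\kappa$, equivalently the frequency localization, alive on \emph{both} sides of the conservation identity: at time $t$ one uses $\|\cdot\|_{\ell^p}\leq\|\cdot\|_{\ell^2}$ shell by shell, while at time $0$ the shell-restricted $\ell^2$ quantity is bounded via H\"older against the counting measure of the shell, $\sum_{|n|\sim N}|\ft{u}_0(n)|^2\lesssim N^{1-\frac2p}N^{-2s}\|u_0\|_{\FL^{s,p}}^2$, and it is this counting loss $N^{1-\frac2p}$, interacting with the $\kappa$-summation of the weights $\kappa(\kappa^2+n^2)^{-1}$, that generates the restriction $s<1-\frac1p$ and (through elementary dyadic inequalities such as $\sum_N a_N^{p/2}\leq(\sup_N a_N)^{\frac p2-1}\sum_N a_N$) the exponent $\frac p2-1$ in \eqref{apriori} --- not the H\"older step you describe. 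Finally, the control of the cubic-and-higher terms of $\log\alpha(\kappa;u)$ in $\ell^p$-based norms, which is the actual substance of \cite{OH1} and \cite{KVZ}, is explicitly deferred in your write-up (``the main obstacle''), so even with the quadratic mechanism repaired the proposal does not yet constitute a proof.
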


When $2\leq p < \infty$ and $\frac34-\frac1p<s<1 - \frac1p$, the global well-posedness immediately follows from the local well-posedness in Theorem \ref{th:lwpreal} and the global-in-time bound \eqref{apriori} in Proposition~\ref{prop:apriori}, by iterating the local argument.
However, we want to remove the upper bound on $s$, using a persistence-of-regularity argument. Before proving Theorem \ref{th:gwp}, we need to modify the nonlinear estimate in Section \ref{sec:nonlinear} accordingly. 
\begin{proposition}\label{prop:nonlinear_new}
	Let $(s,p)$ satisfy $1\leq p <\infty$ and $s\geq \s(p)$, where
	\begin{equation}\s(p)= \begin{cases}
			\frac12 , & 1\leq p <4 \\
			\frac34-\frac1p + , & p \geq 4
		\end{cases}.
		\label{sigma}
	\end{equation}
	Then, the following estimates hold
	\begin{align*}
		\|\N^*(u)\|_{Z^{s,-\frac12}_p(T)} &\lesssim T^\delta  \|u\|_{X^{s,\frac12}_{p,2}(T)} \|u\|_{X^{\s(p),\frac12}_{p,2}(T)}^2  &&\text{if } \ 2\leq p <\infty, \\
		\|\N^*(u)\|_{Z^{s, -\frac12}_p(T)} & \lesssim T^\delta \|u\|_{X^{s,\frac12}_{p,2}(T)} \|u\|^2_{X^{\frac12, \frac12}_{2, 2}(T)}  && \text{if } \ 1\leq p <2, \nonumber
	\end{align*}
	for some $0<\delta\ll1$ and any $0<T \leq 1$.
\end{proposition}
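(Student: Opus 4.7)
The plan is to adapt the case analysis in the proof of Proposition~\ref{prop:nonlinear}, but instead of distributing the high-regularity weight $\jb{n}^s$ symmetrically across the three factors, we exploit the pointwise bound $\jb{n}^s \lesssim \jb{n_{\max}}^s$ to concentrate this weight entirely on the factor carrying the largest frequency, while the remaining two factors are only required to carry the weight $\jb{\,\cdot\,}^{\sigma(p)}$. Since the nonlinearity splits as $\N^*(u) = \mathcal{NR}(u,\cj u, u) - \mathcal{R}(u,u,u)$, I will treat the two pieces separately and follow the three-part structure (Parts 1--3) in the proof of Proposition~\ref{prop:nonlinear}.

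For the non-resonant contribution $\mathcal{NR}$, I would revisit Cases 1.1 and 1.2 in Part~1 (and the analogue for the $X^{s,-1}_{p,1}$-component in Part~2), and at the point where the original proof absorbs $\jb{n}^s / \prod_j \jb{n_j}^s$ by the bound $\jb{n}^s \lesssim \jb{n_{\max}}^s \prod_{k \neq \max}\jb{n_k}^{0}$, split the weight as
\[
\frac{\jb{n}^s}{\prod_{j=1}^3 \jb{n_j}^s}
\;\lesssim\;
\frac{1}{\jb{n_{\max}}^{0}\,\jb{n_{\med}}^{\sigma(p)}\,\jb{n_{\min}}^{\sigma(p)}}\cdot\frac{\jb{n_{\max}}^s}{\jb{n_{\max}}^s},
\]
so that after substituting into $J_1'(n)$ (see \eqref{J1aux}) the convergence of the sum reduces, for $2\leq p<\infty$, to exactly the divisor estimates already carried out in Cases~1.1--1.2 with $s$ replaced by $\sigma(p)$; the conditions $\sigma(p)>\tfrac34-\tfrac1p$ (for $p\geq 4$) encode precisely the summability that the original proof needs at the threshold regularity. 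By H\"older in $n$ we then pick up $\|u\|_{X^{s,1/2}_{p,2}}$ from the maximum-frequency factor and $\|u\|_{X^{\sigma(p),1/2-\nu}_{p,2}}^2$ from the other two, and Lemma~\ref{lm:time} absorbs the $\nu$-loss into the $T^\delta$ factor. For $1\leq p<2$, the same scheme works but, instead of running H\"older with the Fourier-Lebesgue index $p$ on the two low-regularity factors, I would use the embedding $\ell^p \hookrightarrow \ell^2$ and place those factors directly in $X^{1/2,1/2}_{2,2}$; the summability condition $\sigma>\tfrac14$ for $p\leq 2$ from the original argument is clearly satisfied at $\sigma=\tfrac12$.

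The resonant contribution $\mathcal{R}$ is diagonal ($n_1=-n_2=n_3=n$), so only Part~3 of the original proof applies. Here the multiplier is $\jb{n}^s|n|$, and the estimate $\jb{n}^s|n| \lesssim \jb{n}^s \jb{n}^{\sigma(p)} \jb{n}^{\sigma(p)}$ requires $\sigma(p)\geq\tfrac12$; this is exactly guaranteed by \eqref{sigma}. With this distribution, H\"older in $n$ gives $\|u\|_{\FL^{s,3p}}\|u\|_{\FL^{\sigma(p),3p}}^2$ on the spatial side (or $\|u\|_{\FL^{s,p}}\|u\|_{H^{1/2}}^2$ when $p<2$ using $\ell^p\hookrightarrow\ell^2$), and the temporal weights are handled by Lemma~\ref{lemma:convolution} as before.

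The main obstacle I anticipate is the bookkeeping in the non-resonant case when the maximum modulation $\sigma_{\max}$ does \emph{not} coincide with the factor carrying the maximum frequency $n_{\max}$: in the original proof every factor is treated at the common regularity~$s$, so the identities of the ``maximum-frequency factor'' and the ``maximum-modulation factor'' need not be tracked separately, whereas now one must verify that in every subcase one can still extract $\|u\|_{X^{s,1/2}_{p,2}}$ from the maximum-frequency factor while routing the $\jb{\sigma_{\max}}^{1/2}$ weight to the duality/Strichartz factor without conflict. A careful (but routine) re-examination of Cases~1.1, 1.2 and Part~2, together with the duality trick already used in Case~1.2, handles this; the resulting trilinear bound is symmetric enough that the ambiguity in which factor wins the maximum never degrades the gain, and the proposition follows.
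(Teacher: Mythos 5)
Your proposal is correct and follows essentially the same route as the paper's proof: the paper likewise concentrates the weight $\jb{n}^s$ on the maximum-frequency factor and places weight $\sigma(p)$ (resp.\ $\tfrac14+$ when $1\leq p<2$) on the other two, reduces the non-resonant part to the same divisor sums as in the proof of Proposition~\ref{prop:nonlinear} (its $\tilde{J}_1$, $\tilde{J}_2$ are exactly your ``$J_1'$ with $s$ replaced by $\sigma(p)$''), and handles the resonant part via $\jb{n}^s|n|\lesssim \jb{n}^s\jb{n}^{\frac12}\jb{n}^{\frac12}$ together with embeddings of the $\l^p$-spaces, which is where the condition $\sigma(p)\geq\tfrac12$ enters just as you say. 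The only slip is cosmetic: in the resonant case for $1\leq p<2$ the embedding you need is $\l^2\hookrightarrow\l^\infty$ applied to the two $H^{\frac12}$ factors (so that H\"older with exponents $p,\infty,\infty$ closes in $\l^p_n$), not $\l^p\hookrightarrow\l^2$.
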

\begin{proof}
	The proof follows a similar argument to that of Proposition~\ref{prop:nonlinear}. We will only focus on Case 1.1 and Part 3, pointing out where the proof diverges.
	
	Following the argument in Case 1.1, we arrive at the following estimate, instead of \eqref{nonlinear_main},
	\begin{align}
		\|\mathcal{NR}(u,u,u)\|_{X^{s, -\frac12}_{p,2}} \lesssim \bigg\| \sum_{\nbar\in\Lambda(n)} \frac{\jb{n}^s |n_{\max}|}{|\Phi(\nbar)|^\frac12  \prod_{j=1}^3 \jb{n_j}^{\al_j}} \prod_{j=1}^3 \|f_j(n_j)\|_{L^2_\tau} \bigg\|_{\l^p_n}, \label{new_nonlinear_aux1}
	\end{align}
	where $f_j(\tau,n) = \jb{n}^{\al_j} \jb{\tau-n^3}^{\frac12-\nu} |\ft{u}(\tau,n)|$ for some $\al_j\geq 0$, $j=1,2,3$, and $n_{\max} = \max_{j=1,2,3}|n_j|$. For simplicity, assume $|n_3| \leq |n_2| \leq |n_1|$. 
	If $2\leq p <\infty$, let $\al_1 = s$, $\al_2 = \al_3 = \s(p)$. Using H\"older's inequality, the estimate follows once we prove that
	$$\tilde{J}_1(n) = \sum_{\nbar\in\Lambda(n)} \bigg( \frac{\jb{n}^s |n_1|}{|\Phi(\nbar)|^\frac12 \jb{n_1}^s \jb{n_2}^{\s(p)} \jb{n_3}^{\s(p)}} \bigg)^{p'} \lesssim 1,$$
	which follows from the arguments used to estimate \eqref{J1aux}.
	If $1\leq p <2$, let $\al_1 = s$, $\al_2 = \al_3 > \frac14$ and apply H\"older's inequality to obtain
	\begin{align*}
		\text{RHS of } \eqref{new_nonlinear_aux1}& \lesssim \sup_n \bigg( \sum_{\nbar\in\Lambda(n)} \frac{\jb{n}^s |n_1|}{|\Phi(\nbar)|^\frac12 \jb{n_1}^s \jb{n_2}^{\frac14+} \jb{n_3}^{\frac14+}} \prod_{j=2}^3 \|f_j(n_j)\|_{L^2_\tau} \bigg) \|f_1\|_{\l^p_n}.
	\end{align*}
	Using Cauchy-Schwarz inequality, the estimate follows once we prove that 
	$$\tilde{J}_2(n) = \sum_{\nbar\in\Lambda(n)} \bigg( \frac{\jb{n}^s |n_1|}{|\Phi(\nbar)|^\frac12 \jb{n_1}^s \jb{n_2}^{\frac14+} \jb{n_3}^{\frac14+}} \bigg)^{2} \lesssim 1.$$
	This follows by previously seen arguments.
	
	Considering the resonant contribution, following the arguments in Part 3, we have
	\begin{align*}
		\|\mathcal{R}(u)\|_{X^{s,-\frac12 + \nu}_{p,2}} & \lesssim \bigg\| \jb{n}^s |n|  \big\| \jb{\tau-n^3}^{\frac12-\nu} \ft{u}(\tau,n) \big\|_{L^2_\tau}^3 \bigg\|_{\l^p_n} \\
		& \lesssim \| u \|_{X^{s,\frac12-\nu}_{p,2}} \|u\|_{X^{\frac12, \frac12-\nu}_{\infty, 2}}^2 
	\end{align*}
	and the intended estimate follows from the embeddings of the $\l^p$-spaces.
	
\end{proof}

It is now possible to prove Theorem \ref{th:gwp}.
\begin{proof}[Proof of Theorem \ref{th:gwp}]
	If $2\leq p <\infty$ and $\max(\frac12,\frac34 - \frac1p) < s< 1 - \frac1p$, the result follows from Theorem \ref{th:lwp} and the a priori bound \eqref{apriori} in Proposition \ref{prop:apriori}.
	
	Now, consider the case when $2\leq p <\infty$ and $s\geq 1 - \frac1p$. Then, $u_0 \in \FL^{s,p}(\T) \subset \FL^{\s(p),p}(\T)$ with $\s(p)$ as defined in \eqref{sigma} and there exists a unique global solution $u\in C\big(\R; \FL^{\s(p),p}(\T)\big)$. Using the a priori bound in Proposition~\ref{prop:apriori} when running a contraction mapping argument in $Z^{\s(p),\frac12}_p(I)$, for any interval $I$ of length $T>0$, imposes a local time of existence 
	\begin{align}
		T \sim (1 + \|u_0\|_{\FL^{\s(p),p}})^{-\theta}>0, \label{gwpT}
	\end{align}
	for the resulting solution, for some $\theta>0$. Moreover, by choosing $I = [t_0, t_0 + T]$, we get
	\begin{align}
		\|u\|_{Z^{\s(p),\frac12}_q(I)} \leq C \|u(t_0)\|_{\FL^{\s(p),p}}, \label{aux_global}
	\end{align}
	for some $C>0$. Note that by using the a priori bound, the bounds \eqref{gwpT} and \eqref{aux_global} hold uniformly in $t_0$. Using Proposition~\ref{prop:nonlinear_new} and \eqref{aux_global}, it follow that
	\begin{align*}
		\|u\|_{Z^{s,\frac12}_p (I)} & \leq C_1 \|u(t_0)\|_{\FL^{s,p}} + C_2 T^\delta \|u(t_0)\|^2_{\FL^{\s(p),p}} \|u\|_{X^{s,\frac12}_{p,2}(I)}
	\end{align*}
	for constants $C_1,C_2>0$. Using the a priori bound, we have
	$$C_3 T^\delta \|u(t_0)\|_{\FL^{\s(p),p}}^2 \leq C_4 T^\delta \big( 1 + \|u_0\|_{\FL^{\s(p),p}} \big)^{p-2} \| u_0\|_{\FL^{\s(p),p}}^2 \leq \frac12,$$
	where the last inequality holds by possibly refining the choice of $\theta$ in \eqref{gwpT}. Using the embedding $Z^{s,\frac12}_p (I) \embeds C(I;\FL^{s,p}(\T))$, it follows that
	\begin{align*}
		\sup_{t\in I}\|u(t)\|_{\FL^{s,p}} \leq 2 C_1 \|u(t_0)\|_{\FL^{s,p}},
	\end{align*}
	Iterating this argument, we obtain 
	\begin{align*}
		\sup_{t\in[-T^*,T^*]} \|u(t)\|_{\FL^{s,p}} \leq (2C_1)^{\big(1+\|u_0\|_{\FL^{s',q}}\big)^\theta T^*} \|u_0\|_{\FL^{s,p}},
	\end{align*}
	for any $T^*>0$. This shows the global well-posedness of \eqref{renorm} in $\FL^{s,p}(\T)$ for $2\leq p <\infty$ and $s\geq 1-\frac1p$. 
	
	Lastly, if $1\leq p <2$ and $s\geq \frac12$, note that $u_0\in\FL^{s,p}(\T) \subset H^\frac12(\T)$, thus we can follow the persistence of regularity argument, using the a priori bound in Proposition~\ref{prop:apriori} for solutions in $C(I;H^\frac12(\T))$ and the nonlinear estimate in Proposition~\ref{prop:nonlinear_new}.
	
\end{proof}

\appendix 
\section{Mild ill-posedness in $\FL^{s,p}(\T)$ for $s<\frac12$} \label{sec:ap}
In the following, we show the failure of uniform continuity of the data-to-solution map of the complex-valued mKdV \eqref{mkdv} on bounded sets of $\FL^{s,p}(\T)$, for $1\leq p \leq \infty$ and $s<\frac12$. 
The proof follows an argument by Burq-G\'erard-Tzvetkov \cite{BGT} and Christ-Colliander-Tao \cite{CCT03}.
\begin{lemma}
	Let $s<\frac12$ and $1\leq p \leq \infty$. There exist two sequences $\{u_{0n}\}_{n\in\NB}$, $\{\tilde{u}_{0n}\}_{n\in\NB}$ in $C^\infty(\T)$ satisfying the following conditions:
	\begin{enumerate}
		\item $\{u_{0n}\}_{n\in\NB}$, $\{\tilde{u}_{0n}\}_{n\in\NB}$ are uniformly bounded in $\FL^{s,p}(\T)$;
		
		\item $\lim\limits_{n\to\infty} \| u_{0n} - \tilde{u}_{0n} \|_{\FL^{s,p}} =0$;
		
		\item Let $u_n$, $\tilde{u}_n$ be the solutions to \eqref{mkdv} with initial data $u_{0n}$, $\tilde{u}_{0n}$, respectively. Then, there exists $C>0$ such that 
		\begin{align*}
			\underset{n\to\infty}{\liminf} \sup_{t\in[-T,T]}\| u_n(t) - \tilde{u}_n(t) \|_{\FL^{s,p}} \geq C , 
		\end{align*}
		for any $T>0$. 
	\end{enumerate}
	
\end{lemma}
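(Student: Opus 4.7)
The plan is to follow the Burq--Gérard--Tzvetkov / Christ--Colliander--Tao approach by exploiting explicit plane-wave solutions of mKdV \eqref{mkdv} whose nonlinear phase depends quadratically on the amplitude. The starting observation is that for any $a \in \R$ and $n \in \Z$,
\begin{align*}
u_{a,n}(t, x) = a\, e^{i(nx + (n^3 \pm n a^2) t)}
\end{align*}
solves \eqref{mkdv}. This follows from a direct substitution: since $|u_{a,n}|^2 \equiv a^2$, one has $|u_{a,n}|^2 \partial_x u_{a,n} = i n a^2 u_{a,n}$, so the equation reduces to a scalar identity for the phase.

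Next, I would set $a_n = n^{-s}$ and $\tilde a_n = n^{-s}(1 + \epsilon_n)$ for a sequence $\epsilon_n \to 0$ to be chosen below, and define $u_{0n}(x) = a_n e^{inx}$, $\tilde u_{0n}(x) = \tilde a_n e^{inx}$, with corresponding (global, smooth) solutions $u_n := u_{a_n,n}$, $\tilde u_n := u_{\tilde a_n,n}$. Because the Fourier transforms of all these functions are supported on the single mode $n$, the $\FL^{s,p}$ norms collapse to a single term: $\|u_{0n}\|_{\FL^{s,p}} = 1$, $\|\tilde u_{0n}\|_{\FL^{s,p}} = 1 + \epsilon_n$, and $\|u_{0n} - \tilde u_{0n}\|_{\FL^{s,p}} = \epsilon_n$, uniformly in $p \in [1, \infty]$. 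Provided $\epsilon_n \to 0$, this verifies conditions (1) and (2).

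The heart of the proof is the evolution of the difference. Fix $T > 0$ and some $t_0 \in (0, T]$. A direct computation yields
\begin{align*}
\|u_n(t_0) - \tilde u_n(t_0)\|_{\FL^{s,p}} = \left| 1 - (1 + \epsilon_n)\, e^{\pm i n(\tilde a_n^2 - a_n^2) t_0} \right|,
\end{align*}
and one has $n(\tilde a_n^2 - a_n^2) = n^{1-2s}(2\epsilon_n + \epsilon_n^2)$. The hypothesis $s < \tfrac12$ enters precisely here: since $1 - 2s > 0$, I can choose $\epsilon_n = \pi/(2 t_0 n^{1-2s})$, which satisfies $\epsilon_n \to 0$, while $\pm n(\tilde a_n^2 - a_n^2) t_0 \to \pm \pi$. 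Consequently the right-hand side tends to $|1 - e^{\pm i\pi}| = 2$, giving condition (3) for any $C < 2$.

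The only substantive point in the argument is the use of the regularity gap $s < \tfrac12$, which is exactly what permits a vanishing amplitude perturbation ($\epsilon_n \to 0$) to produce a macroscopic phase shift in bounded time; the remaining steps are direct algebraic verifications. Since all Fourier supports are singletons, the computation is insensitive to $p$, so no case distinction across $p \in [1, \infty]$ is required.
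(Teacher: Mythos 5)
Your plane-wave ansatz and phase-decoherence mechanism are exactly the ones the paper uses, but as written your construction has a quantifier problem that keeps it from proving the lemma. The lemma asks for two \emph{fixed} sequences $\{u_{0n}\}$, $\{\tilde{u}_{0n}\}$, constructed once, such that a single constant $C>0$ bounds $\liminf_{n}\sup_{t\in[-T,T]}\|u_n(t)-\tilde{u}_n(t)\|_{\FL^{s,p}}$ from below \emph{for every} $T>0$. You begin with ``Fix $T>0$ and some $t_0\in(0,T]$'' and only then choose $\eps_n=\pi/(2t_0 n^{1-2s})$, so your initial data depend on $T$ through $t_0$; this yields, for each $T$, a $T$-dependent pair of sequences, which is a strictly weaker statement. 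The dependence cannot be removed by simply freezing $t_0=1$ once and for all: with that choice the phase difference at time $t$ is $(\pi+o(1))t$, so for $T<1$ the supremum over $[-T,T]$ is only about $|1-e^{i\pi T}|=2\sin(\pi T/2)$, which tends to $0$ as $T\to0$, and no uniform $C>0$ survives.

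The repair is to make the decoherence time itself tend to zero along the sequence, which is precisely the paper's device. In your normalization: choose $\eps_n\to0$ slowly enough that $\eps_n n^{1-2s}\to\infty$ (for instance $\eps_n=n^{-(1-2s)/2}$, using $s<\tfrac12$), and evaluate the difference at $t_n=\pi/\big(n^{1-2s}(2\eps_n+\eps_n^2)\big)$, where the two phases are exactly opposite; then $t_n\to0$, so for every $T>0$ one eventually has $t_n\in[0,T]$ and
\begin{align*}
\sup_{t\in[-T,T]}\|u_n(t)-\tilde{u}_n(t)\|_{\FL^{s,p}} \geq (2+\eps_n)\,\jb{n}^s n^{-s} \longrightarrow 2,
\end{align*}
giving the uniform constant $C=2$ independent of $T$. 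The paper implements the same idea by decoupling the frequency from the perturbation index: it keeps the amplitude perturbation $1/n$ fixed and chooses the frequency $N_n$ so large that the opposite-phase time $t_n=\pi N_n^{2s-1}/\big((1+\tfrac1n)^2-1\big)$ is at most $1/n$, which is possible exactly because $2s-1<0$. Everything else in your argument --- the verification that the plane waves solve \eqref{mkdv}, the single-mode computation of the $\FL^{s,p}$ norms (exact up to the harmless factor $\jb{n}^s n^{-s}\to1$), and the insensitivity to $p$ --- is correct and matches the paper.
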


\begin{proof}
	Let $N\in\NB$ and $a\in\C$. Define $u^{N,a}$ as follows
	\begin{align*}
		u^{N,a}(t,x) : = N^{-s} a e^{i(Nx + N^3t \pm |a|^2 N^{1-2s}t )},
	\end{align*}
	a smooth solution to \eqref{mkdv}.
	Given $n\in\NB$, let $u_{0n} = u^{N_n,1}(0)$ and $\tilde{u}_{0n} = u^{N_n, 1+\frac1n}(0)$, for some $N_n \in\NB$ to be chosen later. Then, 
	\begin{align*}
		\|u_{0n}\|_{\FL^{s,p}}, \|\tilde{u}_{0n} \|_{\FL^{s,p}} \lesssim 1, \\
	\end{align*}
	uniformly in $n\in\NB$. Moreover,
	\begin{align*}
		\|u_{0n} - \tilde{u}_{0n} \|_{\FL^{s,p}} \sim \frac1n.
	\end{align*}
	Let $u_n = u^{N_n,1}$, $\tilde{u}_n = u^{N_n,1+\frac1n}$ be the solutions corresponding to initial data $u_{0n}$, $\tilde{u}_{0n}$, respectively. 
	Now, considering the difference between the two solutions at time $t\in\R$, we have
	\begin{align*}
		\|u_n(t) - \tilde{u}_n(t) \|_{\FL^{s,p}} \sim \bigg|e^{\pm N^{1-2s} \big(1-(1+\frac1n)^2\big)t} - \Big(1+\frac1n \Big) \bigg|.
	\end{align*}
	Therefore, the solutions have opposite phases at time $t_n>0$ defined as follows
	\[ t_n = \frac{\pi N_n^{2s-1}}{\big( 1 + \frac1n\big)^2 - 1}. \]
	Since $s<\frac12$, we can choose $N_n$ large enough, such that $t_n \leq \frac1n$. Consequently, we have
	\begin{align*}
		\|u_n(t_n) - \tilde{u}_n(t_n) \|_{\FL^{s,p}} \sim 2 + \frac1n \geq 2.
	\end{align*}
	Since $t_n\to 0$ as $n\to\infty$, the functions constructed satisfy the intended conditions and the lemma follows.
	
\end{proof}

\section*{Acknowledgments} 
A.C. would like to thank her advisor, Tadahiro Oh, for suggesting the problem and for his continuous support throughout the work. The author would like to thank Justin Forlano and Tristan Robert for helpful discussions and encouragement. A.C.~acknowledges support from Tadahiro Oh’s ERC Starting Grant (no.~637995 ProbDynDispEq) and the Maxwell Institute Graduate School in Analysis and its Applications, a Centre for Doctoral Training funded by the UK Engineering and Physical Sciences Research Council (grant EP/L016508/01), the Scottish Funding Council, Heriot-Watt University and the University of Edinburgh. The author is also grateful to the anonymous referees for their helpful comments.


\end{document}